\theoremstyle{plain}
\newtheorem{theor}{Theorem}
\theoremstyle{plain}
\newtheorem{prop}{Proposition}
\theoremstyle{plain}
\newtheorem{lemma}{Lemma}
\theoremstyle{plain}
\newtheorem{cor}{Corollary}
\theoremstyle{definition}
\theoremstyle{remark}
\newtheorem{rem}{Remark}
\theoremstyle{remark}
\begin{document}

\title[]{Disjointly homogeneous rearrangement invariant spaces via interpolation}

\author{S.~V.~Astashkin}

\maketitle

\begin{abstract} A Banach lattice $E$ is called {\it $p$-disjointly homogeneous}, $1\le p\le\infty,$ 
when every sequence of pairwise disjoint normalized elements in $E$ has a 
subsequence equivalent to the unit vector basis of $\ell_p$. 
Employing methods from interpolation theory, we clarify which r.i. spaces on $[0,1]$ 
are $p$-disjointly homogeneous. In particular, for every $1<p<\infty$ 
and any increasing concave function $\varphi$ on $[0,1],$ which is not equivalent neither $1$ nor $t$,   
there exists a $p$-disjointly homogeneous r.i. space with the fundamental function $\varphi.$
Moreover, in the class of all interpolation r.i. spaces with respect
to the Banach couple of Lorentz and Marcinkiewicz spaces with the same fundamental function,
dilation indices of which are non-trivial, for every $1<p<\infty$, there is only a 
unique $p$-disjointly homogeneous space.
\end{abstract}

Primary classification: 46E30

Secondary classification(s): 46B03, 46B70







\def\rs{{\cal K}}
\def\pq{X_{\theta,p}}
\def\vr{{M(\tilde{\varphi})}}
\def\fg{{\mathbb N}}
\def\il{{\Lambda(\varphi)}}
\def\iod{i_1, \ldots ,i_d}
\def\ilk{i_1< \ldots <i_k}
\def\iok{i_1, \ldots ,i_k}
\def\ab{{\cal X}}
\def\bc{{\mathbb Z}}
\def\hj{{\mathbb R}}


\section{Introduction}

A Banach lattice $E$ is called \textit{disjointly homogeneous} if any two sequences $\{x_n\}_{n=1}^\infty$
and $\{y_n\}_{n=1}^\infty$ of pairwise disjoint normalized elements from $E$ contain subsequences $\{x_{n_k}\}$ and $\{y_{n_k}\}$,
respectively, which are equivalent in $E$. Similarly, $E$ will be called {\it $p$-disjointly homogeneous}, $1\le p\le\infty,$ 
when every sequence of pairwise disjoint normalized elements in $E$ has a 
subsequence equivalent to the unit vector basis of $\ell_p$. The
importance of disjointly homogeneous spaces introduced in \cite{FTT2} 
is based, mainly, on close connections between the classes of compact and strictly singular operators
acting in such spaces. 
Recall that a linear operator between Banach spaces is {\it strictly singular} if it is not an isomorphism when restricted to
any infinite-dimensional subspace. Recently, it was obtained a series of interesting results 
showing that a strictly singular operator has some compact power whenever the Banach lattice in
which it is bounded is disjointly homogeneous (see \cite{FHST}, \cite{FHSTT}, \cite{FTT2}, \cite{HST}); in particular, 
in the paper \cite{FHST}, it is proved that every strictly singular operator in
a $p$-disjointly homogeneous rearrangement invariant (r.i.) space with lower Boyd index 
$\alpha_X>0$ has compact square and if $p=2$ such a operator even is compact itself.
For this reason it is important to know how wide the class
of disjointly homogeneous Banach lattices. As is shown in the above cited papers
it contains $L_p(\mu)$-spaces, $1\le p\le \infty,$
Lorentz function spaces $L_{q,p}$ and $\Lambda(W, p)$, certain classes of Orlicz function
spaces and also some discrete spaces such as the Tsirelson space. 

The main aim of this paper is to clarify which r.i. spaces on $[0,1]$ 
are $p$-disjointly homogeneous. We focused on the more interesting reflexive case, when $1< p<\infty.$ 
Our approach to this problem is based on using tools from interpolation
theory, especially, the real and complex methods of interpolation. By the complex method
of interpolation, we prove that for every $1<p<\infty$ and any increasing concave function 
$\varphi$ on $[0,1],$ which is not equivalent neither $1$ nor $t$,   
there exists a $p$-disjointly homogeneous r.i. space with the fundamental function $\varphi$
(Corollary \ref{main corollary}). Note that there is the only r.i. space
on $[0,1]$, $L_\infty$ (resp. $L_1$), having the fundamental function 
equivalent to $1$ (resp. $t$). This result is new even for the power functions
$\varphi(t)=t^{\alpha},$ $0<\alpha<1.$ Moreover, in the class of all interpolation r.i. spaces with respect
to the Banach couple of Lorentz and Marcinkiewicz spaces with the same fundamental function
$\varphi$, dilation indices of which are non-trivial, for every $1<p<\infty$, there is only 
a unique $p$-disjointly homogeneous space, 
namely, the Lorentz space $\Lambda_{p,\varphi}$ 
with the quasi-norm $\|x\|_{p,\varphi}= (\int_{0}^{1} [ x^{*}(t)
\varphi(t)]^{p}\,{dt}/{t})^{1/p}$ (Theorem \ref{Uniqueness}).
At the same time, in Section 4, for every $1<p<\infty$ and any increasing concave function $\varphi$ on
$[0,1]$ such that $\lim_{t\to 0}\varphi(t)=0$ and upper dilation index $\beta_\varphi<1,$
we construct a $p$-disjointly homogeneous r.i. 
space with the fundamental function $\varphi$, which is not interpolation with respect to the 
corresponding couple of Lorentz and Marcinkiewicz spaces (Theorem \ref{theor 5}). Finally,
in Section 5 we investigate some properties of sequences of pairwise disjoint functions in the 
real interpolation spaces $(X_0,X_1)_{\theta,p}$ $(0<\theta<1,$ $1\le p<\infty)$
provided that $X_1\subset X_0.$

The author is grateful to Prof. E.M. Semenov for bringing this problem 
and very useful discussions related to it.

\section{Preliminaries}


\subsection{Rearrangement invariant spaces}
In this subsection we present some definitions and auxiliary results from the theory of
 rearrangement invariant spaces. For more details on the latter theory
 we refer to \cite{BS, KPS, LT2}.
 
A Banach function space $X = (X, \| \cdot \|)$ of (classes of) real measurable functions $x(t)$ 
defined on the interval $[0, 1]$ is said to be {\it rearrangement 
invariant (r.i.) space} if the conditions $x^*(t) \leq y^*(t)$ a.e. on $[0,1]$ 
and $y \in  X$ imply $x \in X$ and $\| x \|_{X} \leq \| y \|_{X}$. 
Here, $x^*$ denotes the non-increasing right-continuous rearrangement of $|x(s)|$
given by
$$
x^{*}(t)=\inf \{~\tau\ge 0:m (\{s\in [0,1]:\,|x(s)|>\tau\})\le t~\},\quad 0\le t\le 1,
$$
where  $m $ is the Lebesgue measure.

For every r.i. space $X$ on $[0,1]$ we have the continuous embeddings $ L_{\infty }[0,1] \subset X \subset 
L_{1}[0,1]$. The {\it fundamental function} of an r.i. space $X$ is given by
$\varphi _{X}(t): = \| \chi_{[0, t]}\|_{X},$ $0\le t \le 1$, where 
$\chi _{A}$ denotes the characteristic function of a measurable set $A\subset[0,1]$.
It is well known that every fundamental function is quasi-concave 
on $[0,1],$ i.e., it is non-decreasing and the function ${\varphi_X(t)}/{t}$ 
is non-increasing on $(0, 1]$. Each quasi-concave function $\varphi$ on $[0,1]$  is equivalent
to its least concave majorant $\bar \varphi$, more exactly,
$\frac{1}{2}\bar \varphi(t) \le \varphi(t)\le\bar\varphi(t)\;\;(0\le t\le 1)$ \cite[Theorem~2.1.1]{KPS}.

If $X$ is an r.i. space on $[0,1]$, then {\it the
K\" othe dual} space
$X'$ consists of all measurable functions $y$ such that
$$
\|y\|_{X'}\,\,=\,\,\sup\,\biggl\{\int_{0}^1{x(t)y(t)\,dt}:\;\;
\|x\|_{X}\,\leq{1}\biggr\}\,<\,\infty.
$$
The space $X'$ is r.i. as well; it is embedded into the dual space
$X^*$ of $X$ isometrically, and $X'=X^*$ if and only if $X$ is
separable. An r.i. space $X$ is said {\it to have the Fatou
property} if the conditions $x_n\in X$ $(n=1,2,\dots),$
$\sup_{n=1,2,\dots}\|x_n\|_X<\infty$, and $x_n\to{x}$ a.e. imply
that $x\in X$ and $||x||_X\le \liminf_{n\to\infty}{||x_n||_X}.$  $X$
has the Fatou property if and only if the natural embedding of $X$
into its second K\" othe dual $X''$ is an isometric surjection. 

For a given $t > 0$ the {\it dilation operator} $\sigma_{t}$ defined by 
$\sigma_{t} x(s) = x(s/t) \chi_{[0,1]}(s/t), s \in [0,1],$ is bounded in every 
r.i. space $X$ and $\| \sigma_{t}\|_{X \rightarrow X} \leq max(1, t)$. 
The {\it lower} and {\it upper Boyd indices} of $X$ are defined by
$$
\alpha_{X} = \lim_{t \rightarrow 0^+} \frac {\log \| \sigma_{t}\|_{X \rightarrow 
X}}{\log t},
~~~\beta_{X} = \lim_{t \rightarrow \infty } \frac {\log \| \sigma_{t}\|_{X \rightarrow 
X}}{\log t},
$$
respectively. In general, $0 \leq \alpha_{X} \leq \beta_{X} \leq 1$. 

The {\it lower} and {\it upper dilation indices} of an increasing concave 
function $\psi: [0, 1] \rightarrow [0, \infty)$ are defined as 
\begin{equation*}\label{I:Def1}
\gamma_{\psi} = \lim_{t \rightarrow 0^{+}} \frac {\log m_{\psi}(t)}{\log t},
~~~ \delta_{\psi} = \lim_{t \rightarrow \infty } \frac {\log m_{\psi}(t)}{\log 
t}, ~{\rm where} ~~m_{\psi}(t) = \sup_{0 < s < 1, 0 < st < 1} \frac {\psi(st)}{\psi(s)},
\end{equation*}
respectively. We have $0 \leq \gamma_{\psi} \leq \delta_{\psi} \leq 1$ and from 
the inequality $m_{\varphi_{X}}(t) \leq \| \sigma_{t}\|_{X \rightarrow X},$ 
$t > 0,$ it follows that $ \alpha_{X} \leq \gamma_{\varphi_{X}} \leq 
\delta_{\varphi_{X}} \leq \beta_{X}$.

Most important examples of r.i. spaces are the $L_p$--spaces $(1 \leq p \leq
\infty)$ and their natural generalization, the Orlicz spaces.
The {\it Orlicz space} $L_{M}$ on $[0, 1]$ is generated by the 
Luxemburg-Nakano norm
$$
\|x\|_{L_{M}} = \inf \Big\{\lambda > 0: \int_{0}^{1} 
M\Big(\frac{|x(t)|}{\lambda}\Big)\, dt \leq 1 \Big\},
$$
where $M$ is an Orlicz function, that is, an increasing convex 
function on $[0, \infty)$ such that $M(0) = 0$. The fundamental 
function of $L_M$ is $\varphi_{L_{M}}(t) = {1}/{M^{-1}({1}/{t})}$.

An important role will be played throughout the paper by Lorentz and Marcinkiewicz spaces. 
Let $\varphi$  be an increasing concave 
function on $[0,1]$. The norms of the {\it Lorentz} $\Lambda(\varphi)$ and 
{\it Marcinkiewicz spaces} $M(\varphi)$ are defined by the 
functionals
$$
\|x\|_{\Lambda(\varphi)} = \int_0^1 x^{*}(t)\, d\varphi(t) ~~{\rm and}
~~\|x\|_{M(\varphi)} = \sup_{0 < t \leq 1} \frac{1}{\varphi(t)}\int_{0}^{t} x^{*}(s)\, ds,
$$
respectively. The spaces $\Lambda(\varphi)$ and $M(\tilde{\varphi}),$ where $\tilde{\varphi}(t):=t/\varphi(t),$
have the same fundamental function equal to $\varphi.$ Moreover, these spaces are 
extreme in the class of all r.i. spaces with the fundamental function $\varphi,$ i.e.,
if $\varphi_X(t)=\varphi(t),$ $0\le t\le 1,$ then  $\Lambda(\varphi)\subset X\subset M(\tilde{\varphi})$ and
\begin{equation}\label{eq99}
 \|x\|_{M(\tilde{\varphi})}\le \|x\|_X\le \|x\|_{\Lambda(\varphi)}\;\;(x\in \Lambda(\varphi))
\end{equation}
(see e.g. \cite[Theorems~2.5.5 and 2.5.7]{KPS}).

Recall also the definition of some generalizations of the
classical Lorentz spaces $\Lambda(\varphi)$. Let us begin with 
the most important for us Lorentz space $\Lambda_{p,\varphi}$, which will 
arise later as a $p$-disjointly homogeneous
space. For $1 \leq p < \infty$ and any increasing concave function 
$\varphi$ on $[0, 1]$ this space is generated by the functional
$$
\|x\|_{\Lambda_{p,\varphi}} = \left( \int_{0}^{1} \left[ x^{*}(t) \varphi(t) 
\right ]^{p} \frac{dt}{t} \right)^{1/p}.
$$
The spaces $\Lambda(\varphi)$ were investigated by Sharpley \cite{Sh} and Raynaud \cite{Ra}.
If $\gamma_{\varphi} > 0$, then  
$$
\varphi(t)\leq \int_{0}^{t} \frac{\varphi(s)}{s} ds \leq C \varphi(t),\;\;0 < t \leq 1$$  
(cf. \cite{KPS}, Lemma 2.1.4), and therefore $\Lambda_{1,\varphi}$ 
coincides with the Lorentz space $\Lambda(\varphi)$.
Moreover, $\Lambda_{p,\varphi}$ is an r.i. space on $[0,1]$ with the equivalent norm 
$$
\|x\|_{\Lambda_{p,\varphi}}^{\star} = \left( \int_{0}^{1} \left[ x^{**}(t) \varphi(t) 
\right ]^{p} \frac{dt}{t} \right)^{1/p}
$$
whenever $0 < \gamma_{\varphi} \leq \delta_{\varphi} < 1$ (cf. \cite{Sh}, Lemma 3.1), 
where $x^{**}(t)=\frac{1}{t}\int_0^t x^*(s)\,ds$,
and its fundamental function 
$\varphi_{\Lambda_{p,\varphi}}$ is equivalent to $\varphi$. 
If $\varphi(t)=t^{1/q},$ $1<q<\infty,$ and $1 \le p < \infty$ 
we obtain the well-known spaces $L_{q,p}$,
$$
\|x\|_{q,p}=\left( \int_{0}^{1} \left(t^{1/q} x^{*}(t)\right )^{p} \frac{dt}{t} \right)^{1/p},
$$
which are very important in interpolation theory of operators (see e.g. \cite{BS} and \cite{KPS}).


Much earlier, in \cite{Lo}, Lorentz introduced the another space of such a type,
$\Lambda (W,p),$ generated by the functional
$$
\|x\|_{\Lambda (W,p)} = \left(\int_0^1 [x^{*}(t)]^{p} 
W(t)\,dt \right)^{1/p},$$
where $1\le p<\infty$ and $W$ is a 
positive, non-increasing function on $[0,1]$ such that $\lim_{t\to 0}W(t)=+\infty$ 
and $\int_0^1W(t)\,dt=1$. This space is also $p$-disjointly homogeneous
(see Introduction) but, unlike the spaces $\Lambda_{p,\varphi}$, for a fixed
$1<p<\infty$ the fundamental function of $\Lambda (W,p)$ cannot be equivalent to arbitrary
increasing quasi-concave function. Indeed, $\varphi_{\Lambda (W,p)}(t)=(\int_0^tW(s)\,ds)^{1/p},$
$0\le t\le 1,$ and hence for every function $W$ with the above properties we have the following restriction on 
its upper dilation index: $\delta_{\varphi_{\Lambda (W,p)}}\le 1/p$.
In particular, this class of spaces does not embrace the spaces $L_{q,p}$ if $1<q<p<\infty.$

\medskip 

\subsection{Interpolation spaces and functors}
Let us recall some definitions from the theory of interpolation of operators;
more detailed information see in the monographs \cite{BS, BL, BK, KPS, Trib}.

Say that two Banach spaces $X_0$ and $X_1$ form a {\it Banach couple} $\bar{X} = (X_0, X_1)$ if they linearly and
continuously embedded into a Hausdorff topological linear space; then we can define
their sum $X_0 + X_1$ and intersection $X_0 \cap X_1$ with natural norms.
A Banach space $X$ is called an {\it intermediate space} between $X_0$ and $X_1$ if $X_0 \cap X_1  \subset X  \subset X_0 + X_1$. 
Such a space $X$ is called an {\it interpolation space} with respect to the couple $(X_0,X_1)$
(we write: $X\in Int(X_0, X_1)$) if, for any bounded linear operator $T: X_0 + X_1 \rightarrow X_0 + X_1$ such that the
restriction $T_{| X_i}: X_i \rightarrow X_i$ is bounded for $i = 0, 1$, the restriction $T_{|_X}: X \rightarrow X$ is also bounded 
and $\| T\|_{X \rightarrow X} \leq C \, \max \, \{ \| T\|_{X_0 \rightarrow X_0} , \| T\|_{X_1 \rightarrow X_1} \}$ 
for some $C >0\geq 1$. 

An {\it interpolation method} or {\it interpolation functor} ${\mathcal F}$ is a 
construction (a rule) which assigns to every Banach couple $\bar{X} = (X_0, X_1)$ 
an interpolation space ${\mathcal F}(\bar{X})$ with respect to it. 
One of the most important interpolation methods is the {\it $K$-method} known also as the 
{\it real Lions-Peetre interpolation method}. Throughout the paper we will consider 
only embedded Banach couples $(X_0,X_1)$ or, more precisely,
satisfying the condition: $ X_{1} \subset X_{0}$ and $\|x\|_{X_{0}} \leq\|x\|_{X_{1}}$ 
for all $x \in X_{1}$. For this reason, we adopt our definition of the real $K$-method 
to this special case (for general situation we refer the reader to monograph \cite{BK}).
For a Banach couple
$\bar{X} = (X_0, X_1)$ the {\it Peetre K-functional} of an element $f \in X_0+X_1$ is defined by
$$
K(t, f; X_0, X_1) = \inf \{ \| f_0\|_{X_0} + t \| f_1\|_{X_1}: f = f_0 + f_1, f_0 \in X_0, f_1 \in X_1 \},\;\;t>0.
$$
Let $ X_{1} \subset X_{0}$ and $\|x\|_{X_{0}} \leq\|x\|_{X_{1}},$ 
$x \in X_{1}$. Then, if $E$ is a Banach lattice of sequences $(a_k)_{k=1}^\infty$ such that
$l_1\subset E\subset l_\infty$ and $\varphi$ is a continuous increasing concave function on $[0,1]$
such that $\varphi(0)=0$, the space $(X_0, X_1)_{E(1/\varphi(2^{-n}))}^K$ of the $K$-method of interpolation consists of all
$f\in X_0+X_1$ such that 
$$
\left(K(2^{-n},f;X_0,X_1)\right)_{n=1}^\infty\in E(1/\varphi(2^{-n}))\;\;\mbox{and}\;\;
\| f\|_{\varphi,E}^K: = \| \left(K(2^{-n}, f; X_0, X_1)\right)\|_{E(1/\varphi(2^{-n}))}.$$
In what follows, if $E$ is a Banach sequence lattice and $b_n\ge 0$, by $E(b_n)$ we denote the Banach
lattice consisting of all sequences $(x_n)_{n=1}^\infty$ such that $(x_nb_n)\in E$ with the 
norm $\|(x_n)\|_{E(b_n)}:=\|(x_nb_n)\|_E.$ It is easy to see that the space  
$(X_0, X_1)_{E(1/\varphi(2^{-n}))}^K$ belongs to the class $Int(X_0,X_1)$
for every Banach couple $(X_0,X_1).$ In particular, if $E=l_p$, $1\le p<\infty$, and $\varphi(t)=t^{\theta},$ $0<\theta<1,$ 
we obtain classical {\it Lions--Peetre interpolation spaces} $(X_0,X_1)_{\theta,p}$ 
with the norm 
\begin{equation}\label{eq1000}
 \| f \|_{\theta, p} = \Big(\sum_{n=1}^\infty \left(2^{n\theta} K(2^{-n}, f; X_0, X_1)\right)^p\Big)^{1/p}.
\end{equation}

%

The complex method of interpolation
introduced by Calderon and Lions (see e.g. \cite[Chapter~4]{BL}) 
may be regarded as another important way of constructing
interpolation spaces. It associates to any Banach couple $(X_0,X_1)$
the family of spaces $[X_0,X_1]_\theta$ $(0\le\theta\le 1)$ interpolation with respect to this couple.
Next, we will consider this method only for couples of the form $(M(\tilde{\varphi}),\Lambda(\varphi)),$
where $\varphi$ is an increasing concave function on $[0,1],$  
using the fact that, for every $0<\theta<1$, the space $[M(\tilde{\varphi}),\Lambda(\varphi)]_{\theta}$ 
isometrically coincides with the Calder\'on-Lozanovskii
space $M(\tilde{\varphi})^{1-\theta}\Lambda(\varphi)^\theta$ \cite[Theorem~4.1.4]{KPS}. 
The latter one can be defined as follows:
if $(X_0,X_1)$ is a couple of r.i. spaces on $[0,1]$, 
the space $X_0^{1-\theta}X_1^\theta$ consists of all measurable
functions $x(t)$ on $[0,1]$ such that 
$$
|x(t)|\le C|y(t)|^{1-\theta}|z(t)|^\theta,\;\;0\le t\le 1,$$
for some $C>0,$ $y\in X_0$ and $z\in X_1$, with $\|y\|_{X_0}\le 1$,
$\|z\|_{X_1}\le 1$. The space $X_0^{1-\theta}X_1^\theta$ is endowed 
with the norm $\|x\|:=\inf C,$ where infimum is taken over all $C$
satisfying the preceding inequality together with some $y$ and $z$.

%
%

\medskip 
\subsection{Ultrasymmetric spaces} 
The following concept from the paper \cite{Pust} will be useful for us. 
An r.i. space $X$ on $[0, 1]$ is called {\it ultrasymmetric} if 
there exists an increasing concave function $\varphi$ on $[0, 1]$ such 
that $X \in$ Int($ M(\tilde{\varphi}),\Lambda(\varphi)$). 
It is clear that the latter assumption implies immediately that the fundamental function of $X$ 
is equivalent to $\varphi$, that is, there are constants $c_{0}, 
c_{1} > 0$ such that $c_{0} \varphi(t) \leq \varphi_{X}(t) \leq 
c_{1} \varphi(t)$ for all $t \in [0, 1]$. Ultrasymmetric spaces embrace all possible 
generalizations of Lorentz-Zygmund spaces and this class of spaces is useful in many 
applications (see, for example, \cite{Pust1} and \cite{Pust2}).
We mention also that the natural question, which Orlicz spaces are 
ultrasymmetric, was answered in the paper \cite{AM08}. 

In Section 4 we apply the following characterization theorem of 
ultrasymmetric spaces (cf. \cite[Theorem~2.1]{Pust}). 
Let us write it in the form used further: {\it An r.i. 
space $X$ such that $\gamma_{\varphi_{X}} > 0$ is ultrasymmetric if and only if}
$$
X = \left(L_{1}, L_{\infty} \right)_{E(\varphi_X(2^{-n}) 
2^{n})}^{K}, ~~{\it where} ~~E \in {\it Int}(l_{\infty}, l_{1}).
$$
In particular, clearly, if an r.i. space $X$ is ultrasymmetric, the Banach lattice
$E$ in the preceding formula should be symmetric. 


\section{Ultrasymmetric $p$-disjointly homogeneous r.i. spaces}


\begin{theor}\label{theor 3}
Let $\varphi$ be an increasing concave function on
$[0,1]$ such that $\varphi(0)=0$ and
\begin{equation}\label{eq1001}
 \lim_{t\to 0}\varphi(t)=\lim_{t\to 0}\frac{t}{\varphi(t)}=0.
\end{equation}

Then, for every $1<p<\infty,$ the space ${\mathcal L}_{[\theta]}:=[M(\tilde{\varphi}),\Lambda(\varphi)]_{\theta},$ 
with $\theta:=1/p$, is $p$-disjointly homogeneous.
\end{theor}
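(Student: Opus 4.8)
The plan is to exploit the identification of the complex interpolation space with the Calderón–Lozanovskii product. By the result cited in the excerpt, $\mathcal{L}_{[\theta]}=[M(\tilde\varphi),\Lambda(\varphi)]_\theta$ coincides isometrically with $M(\tilde\varphi)^{1-\theta}\Lambda(\varphi)^\theta$ for $\theta=1/p$. First I would try to identify this product space concretely. Writing out the defining functionals of $M(\tilde\varphi)$ and $\Lambda(\varphi)$ and using the homogeneity/decomposition structure of the Calderón product, I expect the natural candidate to be the Lorentz space $\Lambda_{p,\varphi}$ introduced earlier, whose norm involves $\bigl(\int_0^1 [x^*(t)\varphi(t)]^p\,dt/t\bigr)^{1/p}$. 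Heuristically, $M(\tilde\varphi)$ contributes the ``$L_\infty$-type'' factor weighted by $\tilde\varphi=t/\varphi$ and $\Lambda(\varphi)$ contributes an ``$L_1$-type'' factor weighted by $d\varphi$; raising these to powers $1-\theta$ and $\theta$ and combining should, after passing to rearrangements, produce an integral of $x^*(t)^p$ against a measure comparable to $\varphi(t)^p\,dt/t$. So the first main step is to prove
\begin{equation*}
\mathcal{L}_{[\theta]}=M(\tilde\varphi)^{1-\theta}\Lambda(\varphi)^\theta=\Lambda_{p,\varphi}
\end{equation*}
with equivalence of norms, at least under the hypothesis \eqref{eq1001}.

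Once this identification is in hand, the problem reduces to showing that $\Lambda_{p,\varphi}$ is $p$-disjointly homogeneous. The strategy here is the classical one for Lorentz-type spaces: given a sequence $\{x_n\}$ of pairwise disjoint normalized elements, I would pass to a subsequence whose supports can be controlled and estimate the norm of a linear combination $\sum_k a_k x_{n_k}$. Because the functions are disjointly supported, the rearrangement $(\sum_k a_k x_{n_k})^*$ can be compared, after a blocking argument, to a rearrangement of the concatenated pieces; the weight $\varphi(t)^p\,dt/t$ is essentially multiplicative under dilation when the dilation indices $\gamma_\varphi,\delta_\varphi$ are controlled, which is exactly what lets the mixed terms decouple. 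The aim is the two-sided estimate
\begin{equation*}
\Bigl\|\sum_k a_k x_{n_k}\Bigr\|_{\Lambda_{p,\varphi}}\asymp\Bigl(\sum_k |a_k|^p\Bigr)^{1/p},
\end{equation*}
which is precisely equivalence to the $\ell_p$ basis. A convenient technical aid is the equivalent norm using $x^{**}$ in place of $x^*$, valid when $0<\gamma_\varphi\le\delta_\varphi<1$, since $x^{**}$ behaves more smoothly under the disjoint decomposition; one would select the subsequence so that the mutual interaction of supports is negligible.

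The main obstacle I anticipate is the first step, namely pinning down the Calderón–Lozanovskii product $M(\tilde\varphi)^{1-\theta}\Lambda(\varphi)^\theta$ exactly and verifying it equals $\Lambda_{p,\varphi}$. The Marcinkiewicz norm is a supremum over $t$ of an averaged quantity, while the Lorentz norm is an integral against $d\varphi$, so the product of these is not a pointwise-diagonal computation; one must work at the level of the defining decompositions $|x|\le C|y|^{1-\theta}|z|^\theta$ and optimize. The role of the hypothesis \eqref{eq1001}, i.e. $\lim_{t\to0}\varphi(t)=\lim_{t\to0}t/\varphi(t)=0$, is presumably to guarantee that both endpoint spaces are genuinely ``nontrivial'' (neither collapses to $L_\infty$ nor to $L_1$), which is what makes the product land in the expected Lorentz space with the correct fundamental function $\varphi$. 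A secondary subtlety is keeping track of whether the dilation indices of $\varphi$ are automatically nontrivial under \eqref{eq1001}, or whether that must be arranged before invoking the $x^{**}$-equivalent norm; if not automatic, the disjointness estimate in the second step would need a more hands-on argument directly with $x^*$.
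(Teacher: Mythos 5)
Your proposed reduction has a genuine gap at its very first step. The identification $\mathcal{L}_{[\theta]}=M(\tilde\varphi)^{1-\theta}\Lambda(\varphi)^\theta=\Lambda_{p,\varphi}$ is \emph{not} available under hypothesis \eqref{eq1001} alone: the paper itself only establishes it (in the proof of Theorem \ref{Uniqueness}) under the strictly stronger assumption $0<\gamma_\varphi\le\delta_\varphi<1$, and the tools it needs there --- the reiteration theorem of Brudnyi--Kruglyak, the boundedness of the discrete Calder\'on operator in $l_p(2^k\varphi(2^{-k}))$, and the Hardy inequality converting $x^{**}$ into $x^*$ --- all require non-trivial dilation indices. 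Condition \eqref{eq1001} does not imply non-trivial indices: $\varphi(t)=1/\log(e/t)$ satisfies \eqref{eq1001} but has $\gamma_\varphi=0$, and $\varphi(t)=t\log(e/t)$ satisfies \eqref{eq1001} but has $\delta_\varphi=1$. In those cases $\Lambda_{p,\varphi}$ need not even be a normable space coinciding with the complex interpolation space, so your secondary worry at the end of the proposal is in fact the fatal one: the indices are \emph{not} automatically non-trivial, and without them the whole identification step, not merely the $x^{**}$-equivalence, collapses. Your second step is also only a sketch (``blocking argument'', ``mutual interaction of supports is negligible''), and notably it produces no mechanism for the lower $\ell_p$-estimate.

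The paper's proof avoids any identification of $\mathcal{L}_{[\theta]}$ with a concrete Lorentz space. It works directly with the Calder\'on--Lozanovskii factorization of each individual disjoint function: $x_n\le(1+\varepsilon)y_n^{\theta}z_n^{1-\theta}$ with $y_n\in\Lambda(\varphi)$, $z_n\in M(\tilde\varphi)$ supported in ${\rm supp}\,x_n$. The key lemma (Figiel--Johnson--Tzafriri, or Tokarev), which needs exactly \eqref{eq1001} and nothing more, gives subsequences for which disjoint normalized elements of $\Lambda(\varphi)$ behave like the $\ell_1$ basis and those of $M(\tilde\varphi)$ like the $\ell_\infty$ basis; then the factorization $\ell_\infty^{1-\theta}\ell_1^{\theta}=\ell_p$ of the coefficient sequence yields the upper estimate $\|\sum a_kx_k\|_{[\theta]}\le C\|(a_k)\|_{l_p}$. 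The lower estimate is obtained by duality: one picks biorthogonal disjoint functionals $x_n'$ in $\mathcal{L}_{[\theta]}'=[M(\varphi),\Lambda(\tilde\varphi)]_{1-\theta}$, applies the already-proved upper estimate in the dual with exponent $q$, and reads off the lower $\ell_p$-bound from the boundedness of the K\"othe adjoint operator. If you want to salvage your approach, you must either add the hypothesis $0<\gamma_\varphi\le\delta_\varphi<1$ (weakening the theorem) or replace the identification step by an argument, like the paper's, that operates at the level of factorizations of disjoint elements.
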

\begin{proof}
Let $x_n\in {\mathcal L}_{[\theta]}$ be pairwise disjoint functions, $x_n\ge 0,$ and
$\|x_n\|_{[\theta]}=1$ $(n=1,2,\dots).$
We begin with proving the upper estimate showing that some subsequence of $\{x_n\}$ (we will denote
it, as above, by $\{x_n\}$) satisfies the inequality
\begin{equation}\label{eq17}
 \Big\|\sum_{k=1}^\infty a_kx_k\Big\|_{[\theta]}\le C\|(a_k)\|_{l_p}
\end{equation}
for any sequence $(a_k)\in l_p.$

Firstly, since the space ${\mathcal L}_{[\theta]}$ coincides isometrically with the Calder\'on-Lozanovskii
space $M(\tilde{\varphi})^{1-\theta}\Lambda(\varphi)^\theta,$ $0<\theta<1$ 
(see \cite[Theorem~4.1.4]{KPS} or Preliminaries), for arbitrary $n\in\fg$ and $\varepsilon>0$ there are
$y_n\in\Lambda(\varphi),$ $z_n\in M(\tilde{\varphi}),$ $y_n\ge 0,$ $z_n\ge 0,$
${\rm supp}\,y_n\subset {\rm supp}\,x_n,$ ${\rm supp}\,z_n\subset {\rm supp}\,x_n$ 
such that $x_n\le (1+\varepsilon)y_n^{\theta}z_n^{1-\theta}$ and 
$\|y_n\|_{\Lambda(\varphi)}=\|z_n\|_{M(\tilde{\varphi})}=1.$
Taking into account \eqref{eq1001} and \cite[Theorem~5.1]{FJT} (see also \cite[Proposition~1]{Tok}), we can assume (passing
to a subsequence if necessary) that
\begin{equation}\label{eq15}
 \Big\|\sum_{k=1}^\infty b_ky_k\Big\|_{\Lambda(\varphi)}\asymp \|(b_k)\|_{l_1}\;\;\mbox{and}\;\;
 \Big\|\sum_{k=1}^\infty c_kz_k\Big\|_{M(\tilde{\varphi})}\asymp \|(c_k)\|_{l_\infty}
\end{equation}
for all $(b_k)\in l_1$ and $(c_k)\in l_\infty.$

Next, let $(a_k)\in l_p$, $a_k\ge 0$ $(k\in\fg),$ be arbitrary. Since
$\theta=1/p,$ we have $l_\infty^{1-\theta}l_1^{\theta}=l_p$ (see e.g.
\cite[Ch.~IV, the end of $\S\,1$]{KPS}). Hence,
there exist $(b_k)\in l_1$ and $(c_k)\in l_\infty,$ $b_k\ge 0$, $c_k\ge 0,$ 
satisfying the conditions: $\|(b_k)\|_{l_1}\le \|(a_k)\|_{l_p},$ $\|(c_k)\|_{l_\infty}\le \|(a_k)\|_{l_p},$ and
$a_k\le (1+\varepsilon)b_k^{\theta}c_k^{1-\theta}.$
Then, from the inequality
$$
\sum_{k=1}^\infty a_kx_k\le (1+\varepsilon)^2\Big(\sum_{k=1}^\infty b_ky_k\Big)^{\theta}
\Big(\sum_{k=1}^\infty c_kz_k\Big)^{1-\theta},$$
and \eqref{eq18} it follows that
$$
 \Big\|\sum_{k=1}^\infty a_kx_k\Big\|_{[\theta]}\le (1+\varepsilon)^2
\Big\|\sum_{k=1}^\infty b_ky_k\Big\|_{\Lambda(\varphi)}^{\theta}
\Big\|\sum_{k=1}^\infty c_kz_k\Big\|_{M(\tilde{\varphi})}^{1-\theta}\asymp
\|(b_k)\|_{l_1}^{\theta}\|(c_k)\|_{l_\infty}^{1-\theta}\le \|(a_k)\|_{l_p},
$$
and \eqref{eq17} is proved.

The reverse inequality we prove by using duality. Let us find 
$x_n'\in {\mathcal L}_{[\theta]}'$ such that $\|x_n'\|_{{\mathcal L}_{[\theta]}'}\asymp 1,$
${\rm supp}\,x_n'\subset {\rm supp}\,x_n,$ and
\begin{equation}\label{eq16c}
 \int_0^1 x_n x_n'\,dt=1,\;\;n\in\fg.
\end{equation}
From the K\" othe duality theorem (see e.g. \cite[Theorem~2]{Loz}
or \cite{Shes}) and the equalities $M(\psi)'=\Lambda(\psi)$ and $\Lambda(\psi)'=M(\psi)$, 
which hold for every increasing concave function $\psi$ \cite[Theorems~4.5.2 and 4.5.4]{KPS},
it follows that
$$
{\mathcal L}_{[\theta]}'=[M(\tilde{\varphi}),\Lambda(\varphi)]_{[\theta]})'=[M({\varphi}),\Lambda(\tilde{\varphi})]_{[1-\theta]}.$$
If $1/p+1/q=1,$ then $1-\theta=1/q.$ Therefore, as is shown above, there is a subsequence 
of the sequence $\{x_n'\}$ (we keep the same notation), which satisfies the inequality:
\begin{equation*}\label{eq17c}
 \Big\|\sum_{k=1}^\infty b_kx_k'\Big\|_{{\mathcal L}_{[\theta]}'}\le C\|(b_k)\|_{l_q}
\end{equation*}
for every $(b_k)\in l_q.$ Equivalently, the operator $T(b_k):=\sum_{k=1}^\infty b_kx_k'$
acts boundedly from $l_q$ into ${\mathcal L}_{[\theta]}'$. Since for arbitrary
$(b_k)\in l_q$ and $x\in {\mathcal L}_{[\theta]}''={\mathcal L}_{[\theta]}$ we have
$$
\int_0^1 x(t)T(b_k)(t)\,dt=\sum_{k=1}^\infty b_k\int_0^1 x(t)x_k'(t)\,dt,$$
the  K\" othe dual operator to $T$ is defined by the formula:
$T'x:=\left(\int_0^1 x(t)x_k'(t)\,dt\right)_{k=1}^\infty,$ and $T'$ is bounded from 
${\mathcal L}_{[\theta]}$ into $l_p.$ By \eqref{eq16c}, we have
$T'\left(\sum_{k=1}^\infty a_kx_k\right)=(a_k).$ Hence,
$$
\|(a_k)\|_{l_p}\le \|T'\|\Big\|\sum_{k=1}^\infty a_kx_k\Big\|_{{\mathcal L}_{[\theta]}},
$$
and desired result follows.
\end{proof}

Suppose that an increasing concave function $\varphi$ on $[0,1]$ does not satisfy the condition
\eqref{eq1001}. Obviously, then either $\varphi(t)\asymp 1$ or $\varphi(t)\asymp t$ on
$(0,1].$ A unique (up to equivalence of norms) r.i. space with the fundamental function $1$ (resp. $t$) 
is $L_\infty$ (resp. $L_1$). Since the first of them is $\infty$-disjointly homogeneous and the second one
is $1$-disjointly homogeneous, we obtain the following assertion. 

\begin{cor}\label{main corollary}
Let $1<p<\infty$ and $\varphi$ be an increasing concave function on
$[0,1],$ $\varphi(0)=0.$ Then, there exists a $p$-disjointly homogeneous r.i. space $X$ with 
the fundamental function $\varphi_X=\varphi$ if and only if $\varphi(t)$ is equivalent neither $1$ nor $t$.
\end{cor}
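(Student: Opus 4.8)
The plan is to deduce Corollary \ref{main corollary} directly from Theorem \ref{theor 3}, splitting into the two implications and handling the degenerate fundamental functions by the observations already recorded in the excerpt. For the backward direction (sufficiency), suppose $\varphi$ is equivalent to neither $1$ nor $t$. Since $\varphi$ is increasing, concave, and $\varphi(0)=0$, I would first verify that this non-degeneracy forces condition \eqref{eq1001}, i.e.\ $\lim_{t\to 0}\varphi(t)=0$ and $\lim_{t\to 0} t/\varphi(t)=0$. The first limit is immediate from $\varphi(0)=0$ and monotonicity. The second is the crux of this direction: the function $\tilde\varphi(t)=t/\varphi(t)$ is quasi-concave (being the ``conjugate'' of a quasi-concave function), so it is increasing, and I must rule out that $\tilde\varphi(t)$ stays bounded away from $0$ as $t\to 0$. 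If $\liminf_{t\to 0}\tilde\varphi(t)>0$, then $\varphi(t)\le Ct$ near $0$; combined with concavity and $\varphi(0)=0$ this would pin $\varphi$ down to being equivalent to $t$ on $(0,1]$, contradicting our assumption. Once \eqref{eq1001} is established, Theorem \ref{theor 3} applies verbatim: the space $[M(\tilde\varphi),\Lambda(\varphi)]_{1/p}$ is a $p$-disjointly homogeneous r.i.\ space, and since it is an interpolation space between two spaces of fundamental function $\varphi$, its own fundamental function is equivalent to $\varphi$. A small bookkeeping point: the corollary asks for the fundamental function to equal $\varphi$ exactly, whereas the interpolation construction only gives equivalence, so I would remark that one may renorm to achieve equality, or interpret the statement up to equivalence as is standard.

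For the forward direction (necessity), I argue by contraposition: if $\varphi$ is equivalent to $1$ or to $t$, no $p$-disjointly homogeneous space with $1<p<\infty$ can have fundamental function $\varphi$. Here I use the fact, recalled in the excerpt, that $L_\infty$ is the unique r.i.\ space with fundamental function $1$ and $L_1$ the unique one with fundamental function $t$. The space $L_\infty$ is $\infty$-disjointly homogeneous and $L_1$ is $1$-disjointly homogeneous, so in either case the space fails to be $p$-disjointly homogeneous for $1<p<\infty$ (a sequence of normalized disjoint elements spans $\ell_\infty$ or $\ell_1$ respectively, never $\ell_p$ for intermediate $p$). It remains to justify the dichotomy ``$\varphi\asymp 1$ or $\varphi\asymp t$'' whenever \eqref{eq1001} fails; this is exactly the observation stated in the paragraph just before the corollary, where the author notes that violating \eqref{eq1001} forces one of these two alternatives.

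The main obstacle I anticipate is the elementary but slightly delicate real-variable step of showing that a nondegenerate increasing concave $\varphi$ with $\varphi(0)=0$ must satisfy both limits in \eqref{eq1001}. Everything else is a matter of invoking Theorem \ref{theor 3} and the two uniqueness-of-extreme-space facts. Concretely, I would phrase the proof as follows.

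\begin{proof}
Assume first that $\varphi$ is equivalent neither to $1$ nor to $t$ on $(0,1]$. Since $\varphi$ is increasing and $\varphi(0)=0$, we have $\lim_{t\to 0}\varphi(t)=0$. Set $\tilde\varphi(t)=t/\varphi(t)$; as $\varphi$ is quasi-concave, $\tilde\varphi$ is increasing, so $\ell:=\lim_{t\to 0}\tilde\varphi(t)$ exists in $[0,\infty)$. If $\ell>0$ then $\varphi(t)\le \ell^{-1}t$ for small $t$, which together with concavity and $\varphi(0)=0$ yields $\varphi(t)\asymp t$ on $(0,1]$, a contradiction. Hence $\ell=0$, and \eqref{eq1001} holds. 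By Theorem \ref{theor 3}, the space $[M(\tilde\varphi),\Lambda(\varphi)]_{1/p}$ is $p$-disjointly homogeneous; being an interpolation space with respect to the couple $(M(\tilde\varphi),\Lambda(\varphi))$ of spaces with fundamental function $\varphi$, its fundamental function is equivalent to $\varphi$, and after a suitable renorming we obtain a $p$-disjointly homogeneous r.i.\ space $X$ with $\varphi_X=\varphi$.

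Conversely, suppose such an $X$ exists. If $\varphi$ were equivalent to $1$ or to $t$, then, by the uniqueness of the r.i.\ space with fundamental function $1$ (resp.\ $t$), $X$ would coincide with $L_\infty$ (resp.\ $L_1$) up to equivalence of norms. But $L_\infty$ is $\infty$-disjointly homogeneous and $L_1$ is $1$-disjointly homogeneous, so neither is $p$-disjointly homogeneous for $1<p<\infty$. This contradiction shows that $\varphi$ is equivalent neither to $1$ nor to $t$.
\end{proof}
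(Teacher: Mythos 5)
Your proof is correct and follows essentially the same route as the paper's: Theorem \ref{theor 3} for the non-degenerate case, plus the dichotomy that failure of \eqref{eq1001} forces $\varphi\asymp 1$ or $\varphi\asymp t$, the uniqueness of $L_\infty$ (resp.\ $L_1$) among r.i.\ spaces with fundamental function equivalent to $1$ (resp.\ $t$), and the fact that these two spaces are $\infty$- and $1$-disjointly homogeneous; your extra remark on renorming to get $\varphi_X=\varphi$ exactly (rather than up to equivalence) is a legitimate point the paper glosses over, and the renorming does work, e.g.\ via $\|x\|:=\max\bigl(c\|x\|_X,\|x\|_{M(\tilde\varphi)}\bigr)$ for suitable $c>0$.

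One repair is needed: your claim that $\lim_{t\to 0}\varphi(t)=0$ is ``immediate from $\varphi(0)=0$ and monotonicity'' is not a valid inference. An increasing concave function with $\varphi(0)=0$ can jump at the origin (take $\varphi(0)=0$ and $\varphi(t)=1$ for $t\in(0,1]$, which is concave and non-decreasing); this is precisely why the paper lists $\lim_{t\to 0}\varphi(t)=0$ as a separate hypothesis in \eqref{eq1001} rather than deducing it from $\varphi(0)=0$. The fix is the same argument you use for the second limit: if $\lim_{t\to 0}\varphi(t)=c>0$, then $c\le\varphi(t)\le\varphi(1)$ on $(0,1]$, so $\varphi\asymp 1$, contradicting the hypothesis. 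With that one-line substitution your argument is complete.
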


Clearly, $[M(\tilde{\varphi}),\Lambda(\varphi)]_{\theta}$ is an interpolation space with respect to
the couple $(M(\tilde{\varphi}),\Lambda(\varphi)),$ i.e., is ultrasymmetric.
Let us show that if the dilation indices of an increasing concave function $\varphi$ are non-trivial, 
 i.e., $0<\gamma_\varphi\le\delta_\varphi<1,$ for every $1<p<\infty$, there is a unique ultrasymmetric $p$-disjointly 
 homogeneous r.i. space $X$ with the fundamental function $\varphi$.


\begin{theor}\label{Uniqueness}
Let $1<p<\infty$ and let $\varphi$ be an increasing concave function on
$[0,1]$ such that $0<\gamma_\varphi\le\delta_\varphi<1.$
Then the Lorentz space $\Lambda_{p,\varphi}$ with the quasi-norm
$$
\|x\|_{\Lambda_{p,\varphi}} = \left( \int_{0}^{1} \left[ x^{*}(t)
\varphi(t) \right ]^{p} \frac{dt}{t} \right)^{1/p}
$$
is a unique $p$-disjointly homogeneous r.i. space from the set
$Int(M(\tilde{\varphi}),\Lambda(\varphi))$.
\end{theor}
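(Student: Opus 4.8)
The plan is to establish two things: first, that the Lorentz space $\Lambda_{p,\varphi}$ is itself $p$-disjointly homogeneous, and second, that it is the \emph{only} such space within the interpolation class $Int(M(\tilde{\varphi}),\Lambda(\varphi))$. For the first part, I would invoke the characterization of ultrasymmetric spaces stated in Section~2: since $0<\gamma_\varphi\le\delta_\varphi<1$ guarantees $\gamma_{\varphi}>0$, every space $X\in Int(M(\tilde{\varphi}),\Lambda(\varphi))$ admits a representation $X=(L_1,L_\infty)_{E(\varphi(2^{-n})2^n)}^K$ for some symmetric sequence lattice $E\in Int(l_\infty,l_1)$. In this language the space $\Lambda_{p,\varphi}$ corresponds to the choice $E=l_p$. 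The direct verification that $\Lambda_{p,\varphi}$ is $p$-disjointly homogeneous should follow either from a computation of the $K$-functional of a normalized disjoint sequence or, more cleanly, by noting that under $0<\gamma_\varphi\le\delta_\varphi<1$ the quasi-norm $\|\cdot\|_{\Lambda_{p,\varphi}}$ is equivalent to the $x^{**}$-version (as recalled in the Preliminaries), which makes disjoint sequences behave like the $\ell_p$ basis after passing to a subsequence via a gliding-hump argument.

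For the uniqueness part, the key is to translate the $p$-disjoint-homogeneity of an arbitrary $X=(L_1,L_\infty)_{E(\varphi(2^{-n})2^n)}^K$ into a condition on the sequence lattice $E$, and then to prove that the only symmetric $E\in Int(l_\infty,l_1)$ compatible with $p$-disjoint homogeneity is $E=l_p$. The strategy I would adopt is to build, for a suitable lacunary sequence of indices, pairwise disjoint characteristic-type functions in $X$ whose $K$-functionals reproduce, up to the weight $\varphi(2^{-n})2^n$, the unit vectors of $E$; then the requirement that such disjoint sequences be equivalent to the $\ell_p$ basis forces the block subspaces of $E$ spanned by disjoint (equivalently, by the natural basis) vectors to be $\ell_p$. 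Because $E$ is symmetric, this should propagate to the statement that $E$ itself is $p$-disjointly homogeneous as a sequence lattice, and a symmetric sequence space in $Int(l_\infty,l_1)$ that is $p$-disjointly homogeneous must coincide with $l_p$ up to equivalence of norms.

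The main obstacle I anticipate is the faithful transfer of disjointness between the function-space side and the sequence-lattice side. A disjoint sequence in $X$ does \emph{not} in general correspond to a disjoint sequence in $E$ under the $K$-method representation, because the $K$-functional $K(2^{-n},f;L_1,L_\infty)=\int_0^{2^{-n}}f^*(s)\,ds$ is an integral functional that spreads the mass of $f$ across all scales $n$. To control this I would restrict attention to functions supported on small disjoint intervals whose lengths form a rapidly decreasing (lacunary) sequence, so that the $K$-functional of each $x_k$ is essentially concentrated in a single dyadic band of indices $n$, with negligible overlap between different $k$; the dilation-index hypotheses $0<\gamma_\varphi\le\delta_\varphi<1$ are exactly what make the weight $\varphi(2^{-n})2^n$ behave regularly enough (power-type bounds above and below) for this concentration estimate to close. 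Once this near-disjointification is secured, both inclusions $\Lambda_{p,\varphi}\subset X$ and $X\subset\Lambda_{p,\varphi}$ (which already hold as norms by $l_p\subset E\subset l_p$ forced on the symmetric lattice) combine with \eqref{eq99} to identify $X$ with $\Lambda_{p,\varphi}$ up to equivalence, completing the uniqueness.
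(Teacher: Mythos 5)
The genuine gap is in the \emph{existence} half of the statement. The theorem asserts that $\Lambda_{p,\varphi}$ actually \emph{is} a $p$-disjointly homogeneous member of $Int(M(\tilde{\varphi}),\Lambda(\varphi))$, and your proposal disposes of this with the remark that the equivalence of $\|\cdot\|_{\Lambda_{p,\varphi}}$ with its $x^{**}$-version ``makes disjoint sequences behave like the $\ell_p$ basis after passing to a subsequence via a gliding-hump argument.'' That is not a proof, and it is precisely the hard core of the paper. In the paper this step is Theorem \ref{theor 3}: the upper $\ell_p$-estimate is obtained by factoring elements of the Calder\'on--Lozanovskii space $M(\tilde{\varphi})^{1-\theta}\Lambda(\varphi)^{\theta}=[M(\tilde{\varphi}),\Lambda(\varphi)]_{\theta}$ through disjoint elements of $\Lambda(\varphi)$ and $M(\tilde{\varphi})$, whose subsequences behave like the $\ell_1$ and $\ell_\infty$ bases by \cite{FJT}, and the lower estimate comes from K\"othe duality; the identification of $[M(\tilde{\varphi}),\Lambda(\varphi)]_{1/p}$ with $\Lambda_{p,\varphi}$ then uses reiteration and the Hardy inequality. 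A direct gliding-hump attack on $\Lambda_{p,\varphi}$ is not routine: the weight $\varphi(t)^p/t$ need not be monotone (this is exactly why the class $\Lambda(W,p)$ does not contain $L_{q,p}$ for $q<p$, as noted in Section 2), so disjoint sums do not split along the weight; moreover, the closely related question of whether $(M(\tilde{\varphi}),\Lambda(\varphi))_{1/p,p}$ is $p$-disjointly homogeneous under the weaker hypothesis \eqref{eq1001} alone is left open at the end of Section 5, which should warn you that this step cannot be waved through. Without Theorem \ref{theor 3} or an honest substitute, your argument establishes only that there is \emph{at most one} $p$-disjointly homogeneous space in the class, not that $\Lambda_{p,\varphi}$ is one.

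Your \emph{uniqueness} half, by contrast, is essentially sound and follows a genuinely different route from the paper's. The paper shows that any $p$-disjointly homogeneous $X$ in the class and $\Lambda_{p,\varphi}$ have equivalent norms on linear combinations of characteristic functions $\chi_{e_k}$ of a suitable decreasing sequence of sets, and then invokes Proposition \ref{Uniqueness1} (the step-function cone theorem from \cite{A2}) to conclude $X=\Lambda_{p,\varphi}$. You instead write $X=(L_1,L_\infty)^K_{E(\varphi(2^{-n})2^n)}$ with $E\in Int(l_\infty,l_1)$ symmetric and force $E=l_p$. This works, and your worry about the transfer of disjointness can be bypassed entirely: since $\beta_X\le\delta_\varphi<1$ for every $X$ in the class, Lemma \ref{prop: K-method} gives the exact formula $\|(a_k)\|_E=\|\sum_k a_k\chi_k/\varphi(2^{-k})\|_X$ (the parameter $E$ being unique by \cite[Proposition~2]{A3}, as in the proof of Theorem \ref{theor 5}), so the unit vectors of $E$ correspond isometrically to the disjoint, normalized (up to constants) functions $\chi_k/\varphi(2^{-k})$ in $X$; $p$-disjoint homogeneity of $X$ makes a subsequence of the unit basis of $E$ equivalent to the $\ell_p$ basis, symmetry of $E$ upgrades this to the full basis, and the lattice property plus completeness then give $E=l_p$ as sets with equivalent norms, whence $X=\Lambda_{p,\varphi}$ by the $K$-functional formula and Hardy's inequality, exactly as in the paper's computation. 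This trades the external Proposition \ref{Uniqueness1} for the ultrasymmetric machinery of Sections 2 and 4, and it has the small advantage of not requiring in advance that $\Lambda_{p,\varphi}$ itself be $p$-disjointly homogeneous; but it cannot repair the missing existence step above.
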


To prove this result we will need Theorem 2 from \cite{A2} restated below as Proposition
\ref{Uniqueness1}.

\begin{prop}\label{Uniqueness1}
Suppose that sets $e_k\subset [0,1]$, $k=1,2,\dots,$ satisfy the conditions:
$$
0<m(e_{k+1})\le m(e_{k})\;\;(k=1,2,\dots),\;\;\sum_{k=1}^\infty m(e_{k})\le 1,$$
and
$$
\sup_{k=1,2,\dots}\frac{1}{ m(e_{k})}\sum_{i=k}^\infty m(e_{i})<\infty.$$
Then, if $X_1$ and $X_2$ are r.i. spaces, which are interpolation with respect to the couple
$(M(\tilde{\varphi}),\Lambda(\varphi)),$ with
$0<\gamma_\varphi\le\delta_\varphi<1,$ then from the equivalence
$$
\Big\|\sum_{k=1}^\infty c_k\chi_{e_{k}}\Big\|_{X_1}\asymp 
\Big\|\sum_{k=1}^\infty c_k\chi_{e_{k}}\Big\|_{X_2},\;\;\mbox{for all}\;\;c_k\in\mathbb R,$$
it follows that $X_1=X_2$ (with equivalence of norms).
\end{prop}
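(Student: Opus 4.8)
The plan is to prove that the family of $p$-disjointly homogeneous spaces inside $Int(M(\tilde\varphi),\Lambda(\varphi))$ is nonempty and that each of its members coincides with $\Lambda_{p,\varphi}$; the space $\Lambda_{p,\varphi}$ then arises as the unique such space. Existence is free: $\gamma_\varphi>0$ forces $\lim_{t\to0}\varphi(t)=0$, while $\delta_\varphi<1$ is equivalent to $\gamma_{\tilde\varphi}>0$ and hence forces $\lim_{t\to0}t/\varphi(t)=0$, so condition \eqref{eq1001} of Theorem \ref{theor 3} holds. Consequently $Y:=[M(\tilde\varphi),\Lambda(\varphi)]_{1/p}$ is $p$-disjointly homogeneous, and, being a complex interpolation space, it belongs to $Int(M(\tilde\varphi),\Lambda(\varphi))$. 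It therefore suffices to show that every $p$-disjointly homogeneous $X\in Int(M(\tilde\varphi),\Lambda(\varphi))$ equals $\Lambda_{p,\varphi}$, and then to apply this to $X=Y$.

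First I would record that $\Lambda_{p,\varphi}$ itself lies in $Int(M(\tilde\varphi),\Lambda(\varphi))$. Using $K(2^{-n},x;L_1,L_\infty)=\int_0^{2^{-n}}x^*(s)\,ds=2^{-n}x^{**}(2^{-n})$ and discretizing the equivalent norm $\|\cdot\|^\star_{\Lambda_{p,\varphi}}$ built from $x^{**}$ (the functions $x^{**}$ and $\varphi$ are both doubling along the dyadic scale, the latter because $0<\gamma_\varphi\le\delta_\varphi<1$), one gets
$$\|x\|_{\Lambda_{p,\varphi}}\asymp\Big(\sum_{n}\big[2^n\varphi(2^{-n})\,K(2^{-n},x;L_1,L_\infty)\big]^p\Big)^{1/p},$$
that is, $\Lambda_{p,\varphi}=(L_1,L_\infty)^K_{\ell_p(\varphi(2^{-n})2^n)}$. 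Since $\ell_p$ is symmetric and belongs to $Int(\ell_\infty,\ell_1)$, the characterization of ultrasymmetric spaces recalled in Subsection 2.3 shows that $\Lambda_{p,\varphi}$ is ultrasymmetric, i.e. $\Lambda_{p,\varphi}\in Int(M(\tilde\varphi),\Lambda(\varphi))$. The same description yields the value I need on test families: fixing pairwise disjoint sets $e_k\subset[0,1]$ with $m(e_k)=2^{-k}$ $(k\ge1)$, a direct computation gives
$$\Big\|\sum_k c_k\chi_{e_k}\Big\|_{\Lambda_{p,\varphi}}\asymp\Big(\sum_k|c_k|^p\varphi(2^{-k})^p\Big)^{1/p},$$
and the same equivalence holds over every subfamily $\{e_{k_j}\}$, which is again a geometric (hence admissible) family.

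The heart of the argument is the matching step via Proposition \ref{Uniqueness1}. Let $X\in Int(M(\tilde\varphi),\Lambda(\varphi))$ be $p$-disjointly homogeneous. Being ultrasymmetric, $X$ has fundamental function equivalent to $\varphi$, so $\|\chi_{e_k}\|_X\asymp\varphi(2^{-k})$. The normalized indicators $\chi_{e_k}/\|\chi_{e_k}\|_X$ are pairwise disjoint, hence by $p$-disjoint homogeneity some subsequence is equivalent to the unit vector basis of $\ell_p$; after rescaling this means
$$\Big\|\sum_j c_j\chi_{e_{k_j}}\Big\|_X\asymp\Big(\sum_j|c_j|^p\varphi(2^{-k_j})^p\Big)^{1/p}.$$
Comparing with the displayed equivalence for $\Lambda_{p,\varphi}$, the norms of $X$ and of $\Lambda_{p,\varphi}$ are equivalent on all sums $\sum_j c_j\chi_{e_{k_j}}$. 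The family $\{e_{k_j}\}$ has decreasing measures $2^{-k_j}$ with $\sum_j 2^{-k_j}\le1$ and $\sup_j 2^{k_j}\sum_{i\ge j}2^{-k_i}\le2$, so it satisfies the hypotheses of Proposition \ref{Uniqueness1}. Since both $X$ and $\Lambda_{p,\varphi}$ lie in $Int(M(\tilde\varphi),\Lambda(\varphi))$ and $0<\gamma_\varphi\le\delta_\varphi<1$, that proposition yields $X=\Lambda_{p,\varphi}$. Taking $X=Y$ shows $\Lambda_{p,\varphi}=Y$ is $p$-disjointly homogeneous, and the same conclusion for arbitrary $X$ gives uniqueness.

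The main obstacle I anticipate is the two norm computations on the dyadic family: establishing the $\ell_p$-weighted equivalence for $\Lambda_{p,\varphi}$ (where $\gamma_\varphi>0$ is essential, both to control $\int_0^t\varphi(s)^p\,ds/s\asymp\varphi(t)^p$ and to pass among $x^*$, $x^{**}$ and the $K$-functional), and verifying that the $\ell_p$-behaviour is genuinely preserved on the subsequence supplied by $p$-disjoint homogeneity. The delicate point is that $p$-disjoint homogeneity delivers only an $\ell_p$-equivalent subsequence, so I must ensure the surviving family still meets the summability and regularity conditions of Proposition \ref{Uniqueness1}; the choice $m(e_k)=2^{-k}$ is made precisely so that every subsequence remains geometric and admissible.
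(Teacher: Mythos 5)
Your proposal does not prove the statement it was assigned: the ``heart of the argument'' in your own words is an appeal to Proposition \ref{Uniqueness1} itself (``that proposition yields $X=\Lambda_{p,\varphi}$''), so as a proof of Proposition \ref{Uniqueness1} it is circular. What you have actually written is a proof of the downstream Theorem \ref{Uniqueness}, and essentially the paper's own one at that: the identification $\Lambda_{p,\varphi}=(L_1,L_\infty)^K_{\ell_p(\varphi(2^{-n})2^{n})}$ (which the paper obtains via Brudnyi's reiteration theorem, the equality $[\ell_\infty,\ell_1]_{\theta}=\ell_p$ for $\theta=1/p$, the formula $K(t,f;L_1,L_\infty)=\int_0^t f^*(s)\,ds$, and the Hardy inequality under $0<\gamma_\varphi\le\delta_\varphi<1$), followed by the remark that uniqueness is immediate from Proposition \ref{Uniqueness1} together with the definition of $p$-disjoint homogeneity. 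Your test-family computation with $m(e_k)=2^{-k}$, including the useful observation that every subsequence of a geometric family again satisfies the hypotheses of the proposition (so the subsequence produced by $p$-disjoint homogeneity remains admissible), correctly fleshes out that remark --- but none of it touches the proposition.

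Note that the paper does not prove Proposition \ref{Uniqueness1} either: it is quoted as Theorem 2 of \cite{A2}, where it is established by an argument of a quite different nature, concerning cones of step functions in symmetric spaces. The content of the proposition is precisely the implication that agreement of two interpolation norms on the cone generated by $\{\chi_{e_k}\}$, under the stated decay and regularity conditions on $m(e_k)$, forces the spaces to coincide on \emph{all} of $X_1\cup X_2$; a proof must therefore reduce an arbitrary element of an interpolation space of the couple $(M(\tilde{\varphi}),\Lambda(\varphi))$ to such step functions --- for instance by showing that, when $0<\gamma_\varphi\le\delta_\varphi<1$, the relevant $K$-functional of a general element is realized, up to equivalence, on elements of this cone, and then invoking a $K$-monotonicity (Calder\'on-type) property of the couple. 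Nothing of this kind appears in your proposal, so the implication ``equivalence on $\sum_{k} c_k\chi_{e_k}$ implies $X_1=X_2$'' --- which is the whole statement --- is left entirely unproved.
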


\begin{proof}[Proof of Theorem \ref{Uniqueness}]
Since $0<\gamma_\varphi\le\delta_\varphi<1,$ we have
$$
\Lambda(\varphi)=(L_1,L_\infty)_{l_1(2^{k}\varphi(2^{-k}))}^K\;,
\;M(\tilde{\varphi})=(L_1,L_\infty)_{l_\infty(2^{k}\varphi(2^{-k}))}^K$$
(see e.g. \cite{DKO}). For the same reason, the discrete Calderon operator is bounded in the space 
$l_p(2^{k}\varphi(2^{-k}))$ for every $1\le p\le\infty$ (see e.g. \cite[Theorem~3]{A3}).
Therefore, by the Brudnyi theorem \cite[Theorem~4.3.1]{BK}, we obtain
$$
{\mathcal L}_{[\theta]}=[M(\tilde{\varphi}),\Lambda(\varphi)]_{\theta}=(L_1,L_\infty)_{[l_\infty,l_1]_{\theta}(2^{k}\varphi(2^{-k}))}^K.$$
If $\theta=1/p$, as above, we have $[l_\infty,l_1]_{\theta}=l_\infty^{1-\theta}l_1^{\theta}=l_p.$ Hence,
$$
{\mathcal L}_{[\theta]}=(L_1,L_\infty)_{l_p(2^{k}\varphi(2^{-k}))}^K$$
and, taking into account the equality 
\begin{equation}\label{K-funct}
 K(t, f; L_1, L_\infty) = \int_0^t f^*(s)\,ds,\;\;t>0
\end{equation}
\cite[Theorem~5.2.1]{BL}, we infer that
$$
\|x\|_{{\mathcal L}_{[\theta]}}\asymp \left( \int_{0}^{1} \left[ x^{**}(t)
\varphi(t) \right ]^{p} \frac{dt}{t} \right)^{1/p}$$
(as above, $x^{**}(t):=\frac1t \int_0^t x^*(s)\,ds$).
Since $0<\gamma_\varphi\le\delta_\varphi<1,$ from the Hardy inequality \cite[Theorem~2.6.6]{KPS} 
(see also Preliminaries) it follows that ${\mathcal L}_{[\theta]}=\Lambda_{p,\varphi}$.
By Theorem \ref{theor 3}, $\Lambda_{p,\varphi}$
is a $p$-disjointly homogeneous space. Moreover, its uniqueness is an immediate consequence of 
Proposition \ref{Uniqueness1} and the definition of $p$-disjointly homogeneous space.
\end{proof}

\begin{rem}
If $\varphi(t)=t^{1/q},$ $1<q<\infty,$ and $1 \le p < \infty$ 
we obtain the spaces $L_{q,p}$ (see Section 2); the fact that they are $p$-disjointly homogeneous is known
for a long time (see e.g. \cite{FJT} and \cite{CD}).
\end{rem}

\section{Non-ultrasymmetric $p$-disjointly homogeneous r.i. spaces}

Regarding to the result of Theorem \ref{Uniqueness} the following rather natural question appears: 
If the space $\Lambda_{p,\varphi},$
with $1<p<\infty$ and $0<\gamma_\varphi\le\delta_\varphi<1,$ is a unique
$p$-disjointly homogeneous r.i. space with the fundamental function
$\varphi$? In the next theorem that problem is resolved in negative even in the case of
power functions.

\begin{theor}\label{theor 5} Let $1<p<\infty.$
For every increasing concave function $\varphi$ on
$[0,1]$ such that $\lim_{t\to 0}\varphi(t)=0$ and $\beta_\varphi<1$ there exists a $p$-disjointly homogeneous r.i. 
space $Y$ with the fundamental function $\varphi$ such that $Y\not\in Int(M(\tilde{\varphi}),\Lambda(\varphi))$.
\end{theor}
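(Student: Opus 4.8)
The plan is to realize $Y$ as a variant of the canonical $p$-disjointly homogeneous ultrasymmetric space ${\mathcal L}_{[1/p]}=[M(\tilde{\varphi}),\Lambda(\varphi)]_{1/p}$ of Theorem \ref{theor 3}, in which the $l_p$-parameter governing the dyadic scales is replaced by a \emph{non-symmetric} sequence lattice that is nevertheless $p$-disjointly homogeneous. First I would record that $\lim_{t\to 0}\varphi(t)=0$ together with $\beta_\varphi<1$ forces $\lim_{t\to 0}\tilde{\varphi}(t)=0$ (since $\tilde{\varphi}=t/\varphi$ has positive lower dilation index $1-\beta_\varphi$), so that condition \eqref{eq1001} holds and Theorem \ref{theor 3} is available as a reference point. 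The combinatorial heart of the construction is a Banach sequence lattice $E$ with $l_1\subset E\subset l_\infty$ built as an $l_p$-sum of finite blocks, $\|(a_n)\|_E=\big(\sum_k\|(a_n)_{n\in B_k}\|_{l_{r_k}}^p\big)^{1/p}$, where the block lengths $|B_k|$ grow and the inner exponents $r_k$ oscillate (say alternate between $1$ and $\infty$). One checks directly that $l_1\subset E\subset l_\infty$; that $E$ is $p$-disjointly homogeneous, because any pairwise disjoint normalized sequence has a subsequence meeting each finite block at most once, on which the outer $l_p$-structure makes it $1$-equivalent to the unit vector basis of $l_p$; and that the oscillation of the $r_k$ makes $E$ genuinely non-symmetric, i.e. $E\notin Int(l_\infty,l_1)$.

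I would then transfer $E$ to the function setting, decoupling the scale-by-scale behaviour (which controls the fundamental function) from the disjoint-sequence behaviour (which controls $p$-disjoint homogeneity). When $\gamma_\varphi>0$ — in particular for all power functions $\varphi(t)=t^{\alpha}$, the case stressed in the Introduction — I would set $Y=(L_1,L_\infty)^K_{E(2^n\varphi(2^{-n}))}$, so that $\|x\|_Y=\|(x^{**}(2^{-n})\varphi(2^{-n}))_n\|_E$. On a characteristic function the associated sequence is geometric on both sides of its peak (using $\gamma_\varphi>0$ for the right tail and $\beta_\varphi<1$ for the left), and on geometric sequences the inner $l_{r_k}$-norms are all equivalent to the $l_\infty$-norm; hence $\|\cdot\|_E\asymp\|\cdot\|_{l_p}$ on such sequences and $\varphi_Y\asymp\varphi_{\Lambda_{p,\varphi}}\asymp\varphi$. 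For general $\varphi$ (where $\gamma_\varphi=0$ and the $x^{**}$-representation overshoots $\varphi$) I would instead perform the same block substitution inside the Calder\'on–Lozanovskii representation ${\mathcal L}_{[1/p]}=M(\tilde{\varphi})^{1-1/p}\Lambda(\varphi)^{1/p}$, whose fundamental function is identically $\varphi$ because $M(\tilde{\varphi})$ and $\Lambda(\varphi)$ coincide on characteristic functions; again the block structure leaves $\varphi_Y$ untouched.

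That $Y$ is $p$-disjointly homogeneous I would prove along the lines of Theorem \ref{theor 3}, combined with the analysis of pairwise disjoint functions in the real interpolation spaces $(X_0,X_1)_{\theta,p}$ developed in Section 5. For a sequence of pairwise disjoint normalized functions the supports are disjoint, the norm estimate localizes scale by scale, and — after passing to a subsequence so that no finite block of $E$ is charged more than once — the outer $l_p$-structure of $E$ dominates, giving the two-sided $l_p$-equivalence exactly as in the upper estimate of Theorem \ref{theor 3} and its dual counterpart.

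The main obstacle is the non-interpolation property $Y\notin Int(M(\tilde{\varphi}),\Lambda(\varphi))$. When $\gamma_\varphi>0$ it is immediate: by the characterization theorem of Subsection 2.3 an ultrasymmetric space with $\gamma_{\varphi_Y}>0$ must have the form $(L_1,L_\infty)^K_{F(\varphi(2^{-n})2^n)}$ with $F$ \emph{symmetric}, and since for the couple $(L_1,L_\infty)$ the $K$-functional $t\mapsto t\,f^{**}(t)$ realizes all concave weights, the parameter lattice is determined by the space; this would force $E$ to be symmetric, contradicting its construction. The delicate case is $\gamma_\varphi=0$, where that characterization does not apply. There I would argue that $Y$ fails to be $K$-monotone relative to $(M(\tilde{\varphi}),\Lambda(\varphi))$: exploiting the oscillation of the $r_k$, I would exhibit functions $f,g$ concentrated on the dyadic scales of an ``$l_\infty$-block'' and an ``$l_1$-block'' of $E$ with $K(t,f;M(\tilde{\varphi}),\Lambda(\varphi))\le C\,K(t,g;M(\tilde{\varphi}),\Lambda(\varphi))$ for all $t>0$ yet $\|f\|_Y/\|g\|_Y\to\infty$, equivalently an operator assembled from dilations and averaging operators (each bounded on every r.i.\ space, hence on both extreme spaces) that redistributes mass between these blocks and is therefore bounded on $M(\tilde{\varphi})$ and $\Lambda(\varphi)$ but unbounded on $Y$. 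Verifying that $(M(\tilde{\varphi}),\Lambda(\varphi))$ is a relative Calder\'on couple in the required index range, or else producing the unbounded operator by hand, is precisely where the real work lies and is what separates the easy power-function case from the full statement.
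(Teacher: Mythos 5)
Your overall architecture is the same as the paper's (realize $Y$ as a $K$-method space over $(L_1,L_\infty)$ whose parameter is a deliberately non-symmetric sequence lattice, and get $Y\notin Int(M(\tilde{\varphi}),\Lambda(\varphi))$ from Pustylnik's characterization plus uniqueness of the parameter), and your hand-built block lattice is a plausible substitute for the paper's parameter. But there are two genuine gaps. The first is the $p$-disjoint homogeneity of $Y$. Your key assertion --- that for pairwise disjoint normalized functions ``the norm estimate localizes scale by scale,'' so that after a subsequence ``no finite block of $E$ is charged more than once'' --- is unjustified and false as stated: disjointness of functions on $[0,1]$ does not make their dyadic profiles $\left(K(2^{-n},x_i;L_1,L_\infty)\,2^{n}\varphi(2^{-n})\right)_{n}$ disjointly supported in $E$; each profile is a full sequence living on \emph{all} scales, and rearrangement together with the implicit Calder\'on averaging mixes the scales of different $x_i$. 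Bridging exactly this difficulty is the bulk of the paper's Section 4: Proposition 2 (the transfer result with hypothesis \eqref{eq20}) reduces disjoint functions to block bases of the normalized dyadic indicators $\{\chi_k\}$ via separability and small perturbations; Lemma \ref{prop: K-method} identifies the norms of such block bases with norms in the parameter lattice, \emph{but only under} $\beta_Y<1$; and Lemma \ref{space G} (shift-operator estimates on the parameter lattice) is what makes $\beta_Y<1$ provable, using $\beta_\varphi<1$. None of these steps appear in your outline, and you never mention $\beta_Y<1$ at all. Nor does the method of Theorem \ref{theor 3} transfer: it rests on Calder\'on--Lozanovskii factorization of disjoint elements and K\"othe duality, which are specific to the complex method. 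Note finally that the paper never proves $p$-disjoint homogeneity of $Y$ from scratch; it \emph{transfers} it through \eqref{eq20} from a space already known to be $p$-disjointly homogeneous (Kalton's Orlicz space $L_M$ with $M$ regularly varying of order $p$). You have no such reference space, so all of this would have to be carried out by hand for your $E$.

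The second gap is the case $\gamma_\varphi=0$, which the theorem does cover (e.g.\ $\varphi(t)\asymp 1/\log(e/t)$ satisfies the hypotheses). There your construction is not defined --- the Calder\'on--Lozanovskii space $M(\tilde{\varphi})^{1-\theta}\Lambda(\varphi)^{\theta}$ has no sequence-lattice parameter into which your blocks could be ``substituted'' --- and you explicitly concede that the non-interpolation argument in that case (relative Calder\'on couple, or an unbounded operator produced by hand) is open. So even if completed, your proposal proves the statement only when $\gamma_\varphi>0$, whereas the paper's proof makes no case distinction: it uses only $\beta_\varphi<1$ (through Lemma \ref{space G}) to get $\beta_Y<1$, then Lemma \ref{prop: K-method}, the parameter-uniqueness result of [A3, Proposition~2], and Pustylnik's theorem. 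Relatedly, your one-line justification of uniqueness of the parameter in the case $\gamma_\varphi>0$ (``the parameter lattice is determined by the space'') is the right idea but is itself a nontrivial theorem --- precisely the cited result of [A3] --- not a consequence of the mere fact that $K$-functionals of $(L_1,L_\infty)$ realize all concave functions.
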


For proving this theorem we need several auxiliary results. The first of them was proved in \cite[Proposition~2]{AM08}. 
In what follows we set $\chi_{k}: = \chi_{(2^{-k},2^{-k+1}]},$ $ k = 1, 2, \ldots$

\begin{lemma} \label{prop: K-method}
Let $X$ be an r.i. space on $[0, 1]$ such that its upper Boyd index $\beta_{X} < 1$. Then, we have
\begin{equation*}
X = \left(L_{1}, L_{\infty} \right)_{E(\varphi(2^{-k}) 2^{k})}^{K},
\end{equation*}
where $\varphi$ is the fundamental function of $X$ and $E$ is the Banach lattice of sequences with the norm
\begin{equation*}
\| (a_{k})\|_{E} = \Big\| \sum_{k=1}^{\infty} a_{k} \frac{\chi_{k}}{\varphi(2^{-k})}
\Big\|_{X}.
\end{equation*}
\end{lemma}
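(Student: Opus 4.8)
The plan is to reduce the asserted identity of spaces to a single norm equivalence on $X$, and then to establish that equivalence using the hypothesis $\beta_X<1$ exactly once. First I would unwind the right-hand side. By the formula \eqref{K-funct}, $K(t,f;L_1,L_\infty)=\int_0^t f^*(s)\,ds=t\,f^{**}(t)$, so the weight $\varphi(2^{-k})2^k$ converts the $K$-functional sequence into
$$
\bigl(K(2^{-k},f;L_1,L_\infty)\,\varphi(2^{-k})2^k\bigr)_{k}=\bigl(f^{**}(2^{-k})\,\varphi(2^{-k})\bigr)_{k}.
$$
Inserting this sequence into the norm of $E$ and using its very definition $\|(a_k)\|_E=\bigl\|\sum_k a_k\chi_k/\varphi(2^{-k})\bigr\|_X$, the factors $\varphi(2^{-k})$ cancel, and I obtain the key identity
$$
\|f\|_{(L_1,L_\infty)_{E(\varphi(2^{-k})2^k)}^K}=\Big\|\sum_{k=1}^\infty f^{**}(2^{-k})\chi_k\Big\|_X=:\|g\|_X .
$$
Along the way I would check that $E$ is an admissible parameter: since $\|\chi_k\|_X=\varphi(2^{-k})$ (the fundamental function evaluated at $m(\mathrm{supp}\,\chi_k)=2^{-k}$), each $\chi_k/\varphi(2^{-k})$ is a unit vector, and the disjointness of the $\chi_k$ together with monotonicity of the lattice norm yields $\|(a_k)\|_{l_\infty}\le\|(a_k)\|_E\le\|(a_k)\|_{l_1}$. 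Hence $l_1\subset E\subset l_\infty$, the $K$-method is well defined, and the resulting space is automatically interpolation with respect to $(L_1,L_\infty)$.

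It then remains to prove $\|g\|_X\asymp\|f\|_X$. Here I would use that $f^{**}$ is non-increasing and dominates $f^*$, and that $g$ is itself a non-increasing step function. For the lower estimate, on each interval $(2^{-k},2^{-k+1}]$ one has $g(t)=f^{**}(2^{-k})\ge f^{**}(t)\ge f^*(t)$, so $g\ge f^*$ pointwise and $\|f\|_X=\|f^*\|_X\le\|g\|_X$. For the upper estimate, on the same interval $t\le 2\cdot 2^{-k}$ gives $t/2\le 2^{-k}$, whence $g(t)=f^{**}(2^{-k})\le f^{**}(t/2)=(\sigma_2 f^{**})(t)$ on $[0,1]$; therefore
$$
\|g\|_X\le\|\sigma_2\|_{X\to X}\,\|f^{**}\|_X\le 2\,\|f^{**}\|_X .
$$
The last ingredient is the boundedness of the Hardy averaging operator $f^*\mapsto f^{**}$ on $X$, giving $\|f^{**}\|_X\le C\|f^*\|_X=C\|f\|_X$; combining the two bounds yields $\|f\|_X\le\|g\|_X\le 2C\|f\|_X$, so the two spaces coincide with equivalent norms.

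The main obstacle, and the only point where the assumption is used essentially, is the boundedness of $f^*\mapsto f^{**}$ on $X$. This is precisely Boyd's theorem: the Hardy operator $Pf(t)=\frac1t\int_0^t f(s)\,ds$ is bounded on an r.i. space if and only if $\beta_X<1$, so the hypothesis $\beta_X<1$ is exactly what makes the upper estimate work. A minor technical preliminary to dispose of is that the fundamental function $\varphi$ of $X$ is a priori only quasi-concave; I would replace it by its least concave majorant, which is equivalent to it, so that $\varphi$ may be taken increasing and concave with $\varphi(0)=0$ as required by the $K$-method, all estimates being understood up to multiplicative constants.
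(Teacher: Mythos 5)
Your proof is correct: the reduction to the identity $\|f\|_{(L_1,L_\infty)^K_{E(\varphi(2^{-k})2^k)}}=\bigl\|\sum_{k=1}^\infty f^{**}(2^{-k})\chi_k\bigr\|_X$ via the explicit $K$-functional for $(L_1,L_\infty)$, the pointwise sandwich $f^*\le\sum_k f^{**}(2^{-k})\chi_k\le\sigma_2 f^{**}$, and Boyd's theorem (boundedness of the Hardy averaging operator on $X$ precisely when $\beta_X<1$) are exactly the needed ingredients, and the verification that $l_1\subset E\subset l_\infty$ is also handled properly. Note that the paper itself offers no proof of this lemma---it quotes it from \cite[Proposition~2]{AM08}---and your argument is essentially the standard one given in that reference, so there is nothing to correct or add.
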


Let as recall that an Orlicz function $M$ on $[0,\infty)$ is called {\it regularly varying 
at $\infty$} if the limit $\lim_{t\to\infty}\frac{M(xt)}{M(t)}$
exists for every $x>0.$ Then, as is well known, we have
$$
\lim_{t\to\infty}\frac{M(xt)}{M(t)}=x^p,$$
with some $1\le p<\infty,$ and then we say that $M$ is {\it regularly varying 
at $\infty$ of order $p$}.

\begin{lemma} 
\label{lemma: regularly varying functions} If $M$
is an Orlicz function regularly varying at $\infty$ of order $p$, the Orlicz space $L_M$ is
$p$-disjointly homogeneous.
\end{lemma}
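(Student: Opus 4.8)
The plan is to reduce the statement to a computation with the modular of $L_M$ and then exploit regular variation to show that, along a subsequence, this modular behaves like the $\ell_p$-modular. Write $\rho(f)=\int_0^1 M(|f|)\,dt$ for the Orlicz modular; since $M$ is regularly varying of finite order $p$ it satisfies the $\Delta_2$-condition at infinity, so on $[0,1]$ the Luxemburg norm is governed by $\rho$ in the usual way ($\|f\|_{L_M}=1$ iff $\rho(f)=1$, and $\|f_j\|\to 0$ iff $\rho(f_j)\to 0$). Let $\{x_k\}$ be pairwise disjoint with $\|x_k\|_{L_M}=1$; we may assume $x_k\ge 0$. By disjointness the modular is additive, $\rho(\sum_k a_k x_k)=\sum_k N_k(|a_k|)$, where $N_k(a):=\int_0^1 M(a\,x_k)\,dt$ is an Orlicz function with $N_k(1)=1$. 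Consequently the closed span of $\{x_k\}$ is order isometric to the modular sequence space determined by $(N_k)$, and everything reduces to proving that, after passing to a subsequence, $\sum_k N_k(|a_k|)\asymp\sum_k|a_k|^p$ on the relevant range of coefficients; the norm equivalence $\|\sum_k a_k x_k\|_{L_M}\asymp\|(a_k)\|_{\ell_p}$ then follows by the standard passage from modular to Luxemburg norm.

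The heart of the matter is that disjoint normalized functions must live on sets of vanishing measure and hence take large values, so only the behaviour of $M$ at infinity is felt. Indeed, the supports are pairwise disjoint, so $\sum_k m_k\le 1$ with $m_k:=m(\operatorname{supp} x_k)$, giving $m_k\to 0$. Fixing a threshold $T$ and writing $A_k=\{x_k>T\}$, the part below $T$ is negligible, $\int_{\{x_k\le T\}}M(x_k)\,dt\le M(T)\,m_k\to 0$, so $\int_{A_k}M(x_k)\,dt\to 1$. For a fixed compact range of dilations the uniform convergence theorem for regularly varying functions gives $M(a s)=(1+o(1))\,a^pM(s)$ as $s\to\infty$, uniformly for $a\in[\delta,1/\delta]$; applied on $A_k$ this yields $N_k(a)\to a^p$ uniformly for $a$ in compact subsets of $(0,\infty)$. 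This already delivers both the upper and the lower $\ell_p$-estimates for coefficient vectors whose nonzero entries are bounded away from $0$ and $\infty$.

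The main obstacle is the region of very small coefficients $a_k$: there $a_kx_k$ may leave the power-law regime, and $N_k(a)/a^p$ can in fact be unbounded near $a=0$ because $M$ is only sub-power at small arguments. Crucially, Potter's bounds $M(as)\le C_\varepsilon a^{p-\varepsilon}M(s)$ and $M(as)\ge C_\varepsilon^{-1}a^{p+\varepsilon}M(s)$ (valid for $a\le 1$ and $s,as$ large) carry the \emph{wrong} exponent for summation, so they do not by themselves give $\sum_k N_k(a_k)\asymp\sum_k a_k^p$; the sharp exponent $p$ is genuinely needed. I would therefore resolve this by a single subsequence extraction that (i) realizes the two-sided bounds $c\,a^p\le N_k(a)\le C\,a^p$ on $[\delta,1]$ coming from regular variation, and (ii) forces the support measures $m_k$ to decay faster than any prescribed sequence (possible since $m_k\to 0$). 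For such a rapidly shrinking subsequence $x_k$ is extremely large on the bulk of its support, so in $N_k(a_k)=\int_0^1 M(a_kx_k)\,dt$ one splits according to whether $a_kx_k$ is large or small: on the large part regular variation gives the sharp contribution $\asymp a_k^p$, while the small part is bounded by $M(s_0)\,m_k$, which is summable and negligible after the rapid-decay choice. Summing over $k$ yields $\sum_k N_k(a_k)\asymp\sum_k a_k^p$, and hence the desired equivalence of $\{x_k\}$ with the unit vector basis of $\ell_p$.
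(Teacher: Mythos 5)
Your reduction to the modular functions $N_k(a)=\int_0^1 M(a\,x_k)\,dt$ and the goal $\sum_k N_k(a_k)\asymp\sum_k a_k^p$ after a subsequence extraction is a viable (if laborious) route, and you correctly isolate the crux: coefficients $a_k\to 0$, where neither uniform convergence on compact dilation sets nor Potter bounds yield the sharp exponent $p$. But your proposed resolution does not close this gap; it relocates it. In the split of $N_k(a_k)$ along $\{a_kx_k>s_0\}$ versus $\{a_kx_k\le s_0\}$, the assertion that ``on the large part regular variation gives the sharp contribution $\asymp a_k^p$'' is unjustified exactly when $a_k$ is small: to write $M(a_kx_k)\asymp a_k^p M(x_k)$ you need the \emph{dilation parameter} $a_k$ to be admissible, i.e.\ you need $x_k(t)\ge s(a_k)$, where $s(\delta)$ is the threshold beyond which $M(\delta t)/M(t)\asymp \delta^p$ holds for dilations in $[\delta,1]$; the set $\{a_kx_k>s_0\}$ only gives $x_k>s_0/a_k$, and nothing forces $s_0/a_k\ge s(a_k)$, since $s(\delta)$ may blow up arbitrarily fast as $\delta\to 0$. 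There is also a quantifier problem: your subsequence (however rapidly $m_k$ decays) is fixed \emph{before} the coefficients, while the errors your scheme produces, e.g.\ the mass $\int_{\{x_k\le s_0/a_k\}}M(x_k)\,dt\le M(s_0/a_k)\,m_k$ lost in the lower estimate, depend on $a_k$ and blow up as $a_k\to 0$ for each fixed $k$; no once-and-for-all rapid-decay choice controls them uniformly over all $(a_k)\in\ell_p$. Finally, with a fixed $\delta$ in your step (i) you cannot treat the indices with $a_k<\delta$ by monotonicity either, since the resulting bound $N_k(a_k)\le N_k(\delta)\approx C\delta^p$ is not summable in $k$.

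The repair is to make the cutoff $k$-dependent and \emph{prescribed in advance}. Regular variation gives (this is Lemma 6.1 of \cite{Kal}, the very lemma the paper invokes) an absolute constant $C$ such that for every $\epsilon\in(0,1]$ there is $s(\epsilon)$ with $C^{-1}a^p\le M(at)/M(t)\le Ca^p$ for all $a\in[\epsilon,1]$ and $t\ge s(\epsilon)$. Choose $\epsilon_k\downarrow 0$ with $\sum_k\epsilon_k^p<\infty$ and extract the subsequence so that $M(s(\epsilon_k))\,m_k\le 2^{-k}$; this decay rate is prescribed before the coefficients, so your step (ii) legitimately applies. Then for $a\in[\epsilon_k,1]$ one gets $C^{-1}a^p-2^{-k}\le N_k(a)\le Ca^p+2^{-k}$, while for $a<\epsilon_k$ monotonicity gives $N_k(a)\le C\epsilon_k^p+2^{-k}$ and one uses $a^p<\epsilon_k^p$ directly; all error sequences are summable, and the modular, hence norm, equivalence with $\ell_p$ follows. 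Note that the paper avoids this entire analysis: it only verifies, via Kalton's lemma, that every function in $E_M^\infty=\bigcap_{s>0}\overline{\{M(xt)/M(t):t>s\}}$ is equivalent to $x^p$ at zero, and then quotes Theorem 4.1 of \cite{FHST}, which already encapsulates the subsequence-extraction machinery you are attempting to rebuild by hand.
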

\begin{proof}
Consider the sets 
$$
E_{M,s}^\infty:=\overline{\left\{\frac{M(xt)}{M(t)}:\,t>s\right\}}\;\;\mbox{and}\;\;
E_{M}^\infty:=\cap_{s>0}E_{M,s}^\infty,$$
where the closure is taken in the space $C[0,1].$
Since $M$ regularly varies at $\infty$ of order $p$, by \cite[Lemma~6.1]{Kal},
there is a certain constant $C>0$ such that for every
$0<x_0\le 1$ it can be found $s_0>0$ such that for all $t\ge s_0$ and
$x_0\le x\le 1$ we have
$$
C^{-1}x^p\le \frac{M(xt)}{M(t)}\le Cx^p.$$
Hence, if $s\ge s_0,$ then arbitrary function $f\in E_{M,s}^\infty$
satisfies the inequality
$$
C^{-1}x^p\le f(x)\le Cx^p\;\;\mbox{for all}\;\;x_0\le x\le 1.$$
Since $x_0>0$ may be chosen arbitrarily small, combining this together with the definition 
of the set $E_{M}^\infty$, we infer that every function from  $E_{M}^\infty$
is equivalent to the function $x^p$ at zero.
Finally, applying \cite[Theorem~4.1]{FHST}, we get desired result.
\end{proof}

\begin{prop} 
\label{prop:  main}
Let $X$ and $Y$ be r.i. spaces on $[0,1]$, $1\le p<\infty.$
Suppose that $Y$ is separable and $X$ is $p$-disjointly homogeneous.
Then, if 
\begin{equation}\label{eq20}
\Big\| \sum_{k=1}^{\infty} a_{k} \frac{\chi_{k}}{\varphi_Y(2^{-k})}\Big 
\|_{Y}\asymp  \Big\| \sum_{k=1}^{\infty} a_{k} \frac{\chi_{k}}{\varphi_X(2^{-k})}\Big 
\|_{X},
\end{equation}
for arbitrary $a_k\in\hj$, then $Y$ is also $p$-disjointly homogeneous.
\end{prop}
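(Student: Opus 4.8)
The plan is to verify the defining property of $p$-disjoint homogeneity for $Y$ by transporting an arbitrary normalized disjoint sequence to the $p$-disjointly homogeneous space $X$ through the equivalence \eqref{eq20}, and then transporting an $l_p$-subsequence back. So let $\{y_n\}$ be pairwise disjoint with $y_n\ge 0$ and $\|y_n\|_Y=1$. First I would put this sequence into a convenient canonical form. Since for disjoint functions the distribution of $\sum_n|a_n|y_n$ is the sum of the individual distributions, the value of $\|\sum_n a_ny_n\|_Y$ depends only on the rearrangements $y_n^*$ and on $(a_n)$; I am therefore free to replace the $y_n$ by disjoint copies of the $y_n^*$ placed on pairwise disjoint dyadic intervals. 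Using that $Y$ is separable, i.e. has order continuous norm, I would then approximate each copy in $Y$-norm by a function $\hat y_n$ which is constant on the dyadic subintervals of its supporting dyadic interval, keeping the supports disjoint. As a disjoint normalized sequence is $1$-unconditional, a standard small-perturbation argument shows that, after passing to a subsequence, $\{\hat y_n\}$ is equivalent to $\{y_n\}$. Thus it suffices to treat disjoint dyadic step functions $\hat y_n$.

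Next comes the regularization and the transfer itself. The dilation operator $\sigma_2$ is bounded on every r.i. space, and since $F:=\sum_n|a_n|\hat y_n$ is a dyadic step function, monotonicity of $F^*$ gives $\|F\|_Y\asymp\|\sum_k F^*(2^{-k})\chi_k\|_Y$, and similarly in $X$. The point is that \eqref{eq20} is exactly the statement that the two diagonal-block functionals agree: writing $\beta_k:=F^*(2^{-k})$ and $r_k:=\varphi_Y(2^{-k})/\varphi_X(2^{-k})$, the substitution $a_k=\beta_k\varphi_Y(2^{-k})$ turns \eqref{eq20} into
$$
\Big\|\sum_k\beta_k\chi_k\Big\|_Y\asymp\Big\|\sum_k\beta_k r_k\chi_k\Big\|_X,
$$
which, in the language of Lemma \ref{prop: K-method}, asserts that the sequence lattices $E$ associated to $Y$ and to $X$ coincide. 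Combining the two regularizations with this identity I would obtain
$$
\Big\|\sum_n a_n\hat y_n\Big\|_Y\asymp\Big\|\sum_k F^*(2^{-k})\,r_k\,\chi_k\Big\|_X .
$$
The remaining task is to realize the right-hand side, \emph{uniformly in} $(a_n)$, as the span-norm in $X$ of a pairwise disjoint normalized sequence $\{x_n\}$. Granting this, $p$-disjoint homogeneity of $X$ supplies a subsequence with $\|\sum_i a_ix_{n_i}\|_X\asymp\|(a_i)\|_{l_p}$, and the displayed equivalence then returns the desired $l_p$-subsequence of $\{y_n\}$.

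The main obstacle is precisely this last step. The weight $r_k$ in the transferred expression is attached to the \emph{sorted position} $k$ of $F^*$, whereas the obvious candidate for $x_n$ (a disjoint copy in $X$ of the profile of $\hat y_n$) carries a weight attached to the \emph{home scale} of each block; because the decreasing rearrangement of a disjoint sum reshuffles blocks from different indices according to their values, these two weightings disagree when $\varphi_Y\not\asymp\varphi_X$, i.e. the scale multiplier does not commute with rearrangement. I would resolve this not by forcing an exact matching, but by reducing $p$-disjoint homogeneity of each space to a property of its sequence lattice $E$: since for separable r.i. spaces the occurrence of an $l_p$-subsequence in every normalized disjoint sequence can be read off from the associated lattice $E$ (this is the mechanism behind the criterion of \cite[Theorem~4.1]{FHST} used already in Lemma \ref{lemma: regularly varying functions}), and since \eqref{eq20} gives $E_Y=E_X$ while $X$ is $p$-disjointly homogeneous, the same criterion holds for $E_Y=E_X$ and therefore $Y$ is $p$-disjointly homogeneous. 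The delicate part to be made rigorous is exactly that the profile information of a disjoint sequence is faithfully encoded, up to the fixed constants coming from $\|\sigma_2\|$ and from quasi-concavity of $\varphi_Y,\varphi_X$, in the diagonal samples $F^*(2^{-k})$, so that the passage through $E$ indeed controls all disjoint sequences and not merely the diagonal-block one.
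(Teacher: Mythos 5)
Your reduction to disjoint dyadic step functions is sound and parallels the first half of the paper's argument, but the transfer step contains a genuine gap --- one you flag yourself and then do not close. After forming $F=\sum_n|a_n|\hat y_n$ you pass to the \emph{global} decreasing rearrangement $F^*$ and sample it at the points $2^{-k}$. Since the sorted position of each block depends on the coefficients $(a_n)$, the quantity $\bigl\|\sum_k F^*(2^{-k})\,r_k\,\chi_k\bigr\|_X$ is not of the form $\bigl\|\sum_n a_n x_n\bigr\|_X$ for any \emph{fixed} disjoint sequence $\{x_n\}\subset X$: the data to which the weights $r_k$ are attached depend nonlinearly on $(a_n)$ through the sorting. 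Consequently $p$-disjoint homogeneity of $X$, which is a statement about fixed disjoint sequences and all coefficient choices, yields nothing uniform in $(a_n)$. Your proposed repair is circular: the ``principle'' that $p$-disjoint homogeneity of a separable r.i.\ space can be read off from the sequence lattice $E$ of Lemma \ref{prop: K-method} is essentially equivalent to the Proposition being proved, and the cited result \cite[Theorem~4.1]{FHST} does not supply it --- that theorem is a criterion for \emph{Orlicz} spaces in terms of the set $E_M^\infty$ of limit functions of $M(xt)/M(t)$, an object unrelated to the K-method lattice $E$. So the decisive step is assumed, not proved.

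The step can be closed, and the paper closes it, by never forming a global rearrangement. The key freedom is that \eqref{eq20} holds for \emph{arbitrary} coefficients $a_k$, not only monotone ones; hence it suffices to show that a subsequence of $\{y_n\}$ is equivalent in $Y$ to a block basic sequence of $\{\chi_k\}$ in which each function occupies its own band of dyadic scales. Concretely: sort each $y_i$ individually, use separability to cut its tail at a scale $2^{-n_i}$ with $y_i^*(2^{-n_i})\varphi_Y(2^{-n_i})<\varepsilon_i$, and sandwich the truncation $u_i$ between $\sigma_{1/2}z_i$ and $z_i$ plus a small term, where $z_i$ is the sample $\sum_{k} y_i^*(2^{-k})\chi_k$ over the scales of the $i$-th band; the boundedness of $\sigma_2$ and the small perturbation principle then give $\bigl\|\sum_i c_i y_i\bigr\|_Y\asymp\bigl\|\sum_i c_i z_i\bigr\|_Y$. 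Now $\sum_i c_i z_i$ is already a linear combination of the $\chi_k$, so \eqref{eq20} applies coefficientwise and transfers it to $\sum_i c_i x_i$, where $x_i$ carries the weights $\varphi_Y(2^{-k})/\varphi_X(2^{-k})$ at its own scales. These $x_i$ are pairwise disjoint (they live on disjoint sets of scales), fixed independently of $(c_i)$, and satisfy $\|x_i\|_X\asymp\|z_i\|_Y\asymp 1$ by \eqref{eq20} again; so $p$-disjoint homogeneity of $X$ applies after a further passage to a subsequence, and transferring back gives $\bigl\|\sum_i c_i y_i\bigr\|_Y\asymp\|(c_i)\|_{l_p}$. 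This is exactly the mismatch you identified: the remedy is to keep every disjoint function at its home scales, so the multiplier never has to commute with a rearrangement.
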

\begin{proof}
At first, we prove the property of $p$-disjoint homogeneity for block basic sequences of  
$\{\chi_k\}$ normalized in $Y$, i.e., for an arbitrary sequence $\{y_i\},$ $\|y_i\|_Y=1$, such that
$$
y_i=\sum_{k=n_i}^{n_{i+1}-1} a_k\chi_k,\;\;1\le n_1<n_2<\dots,\;a_k\in\hj.
$$
By assumption, without loss of generality, we can assume that after normalization the sequence
$$
x_i:=\sum_{k=n_i}^{n_{i+1}-1} a_k\frac{\varphi_Y(2^{-k})}{\varphi_X(2^{-k})}\chi_k,\;\;i=1,2,\dots
$$
is equivalent in $X$ to the unit vector basis in $l_p.$
By \eqref{eq20}, for any $c_i\in\hj$ we have
\begin{eqnarray*}
 \Big\|\sum_{i=1}^\infty c_iy_i\Big\|_Y &=& \Big\|\sum_{i=1}^\infty c_i
\sum_{k=n_i}^{n_{i+1}-1} a_k\frac{\varphi_Y(2^{-k})}{\varphi_Y(2^{-k})}\chi_k\Big\|_Y\\
&\asymp& \Big\|\sum_{i=1}^\infty c_i
\sum_{k=n_i}^{n_{i+1}-1} a_k\frac{\varphi_Y(2^{-k})}{\varphi_X(2^{-k})}\chi_k\Big\|_X\\
&=& \Big\|\sum_{i=1}^\infty c_ix_i\Big\|_X 
\asymp \Big(\sum_{i=1}^\infty |c_i|^p\|x_i\|_X^p\Big)^{1/p}.
\end{eqnarray*}

Since
\begin{eqnarray*}
 \|x_i\|_X&=&\Big\|\sum_{k=n_i}^{n_{i+1}-1} a_k\frac{\varphi_Y(2^{-k})}{\varphi_X(2^{-k})}\chi_k\Big\|_X\\
&\asymp& \Big\|\sum_{k=n_i}^{n_{i+1}-1} a_k\chi_k\Big\|_Y=\|y_i\|_Y=1,
\end{eqnarray*}
we come to the equivalence
$$
 \Big\|\sum_{i=1}^\infty c_iy_i\Big\|_Y \asymp \|(c_i)\|_{l_p},$$
and our claim is proved.

Now, show that the general case can be reduced to the previous one.  
A given sequence $\{y_i\}_{i=1}^\infty\subset Y$ of pairwise disjoint functions such that $\|y_i\|_Y=1,$ $i=1,2,\dots,$
we construct a normalized block basic sequence of $\{\chi_k\},$ which is 
equivalent to $\{y_i\}$ in $Y$.

Since $Y$ is a separable r.i. space, we can assume that the functions $y_i$ are finite-valued,
$supp\,y_i=[\alpha_i,\beta_i],$ 
$y_i(t)=y_i^*(t-\alpha_i)$ if $\alpha_i\le t\le \beta_i,$ $i=1,2,\dots$
Setting
$$
z_1':=\sum_{k=1}^\infty y_1^*(2^{-k})\chi_k,$$
it is easily to see that
$$
\sigma_{1/2}z_1'(t)\le y_1^*(t)\le z_1'(t),\;\;0\le t\le 1,$$
where
$\sigma_{1/2}x(t)=x(2t).$
Let $\varepsilon_i>0$ and $\varepsilon_i\to 0$ as $i\to\infty.$
Since $y_1$ is a finite-valued function and $\lim_{t\to +0}\varphi_Y(t)=0$
(as $Y$ is separable), there is $n_1\in\fg$ such that
$$
y_1^*(t)=y_1^*(2^{-n_1})\;\;\mbox{if}\;\;0\le t\le 2^{-n_1}\;\;\mbox{and}\;\;y_1^*(2^{-n_1})\varphi_Y(2^{-n_1})<\varepsilon_1.$$
Then, the functions
$$
u_1:=y_1^*\chi_{(2^{-n_1},1]}\;\;\mbox{and}\;\;z_1:=\sum_{k=1}^{n_1-1} y_1^*(2^{-k})\chi_k$$
satisfy the following conditions:
$$
\|y_1^*-u_1\|_Y<\varepsilon_1$$
and
$$
\sigma_{1/2}z_1(t)\le u_1(t)\le z_1(t)+y_1^*(2^{-n_1})\chi_{n_1},\;\;0\le t\le 1.$$
Clearly, one may assume that $\beta_2-\alpha_2<2^{-n_1}.$ Then $ y_2^*(t)=0$ if $t\ge 2^{-n_1}$, whence
$$
\sigma_{1/2}z_2'(t)\le y_2^*(t)\le z_2'(t),\;\;0\le t\le 1,$$
where we set
$$
z_2':=\sum_{k=n_1+1}^\infty y_2^*(2^{-k})\chi_k.$$
As above, there exists $n_2>n_1$, for which we have
$$
y_2^*(t)=y_2^*(2^{-n_2})\;\;\mbox{if}\;\;0\le t\le 2^{-n_2}\;\;\mbox{and}\;\;y_2^*(2^{-n_2})\varphi_Y(2^{-n_2})<\varepsilon_2.$$
Now, if the functions $u_2$ and $z_2$ are defined as follows:
$$
u_2:=y_2^*\chi_{(2^{-n_2},2^{-n_1}]}\;\;\mbox{and}\;\;z_2:=\sum_{k=n_1+1}^{n_2-1} y_2^*(2^{-k})\chi_k,$$
we obtain
$$
\|y_2^*-u_2\|_Y<\varepsilon_2$$
and
$$
\sigma_{1/2}z_2(t)\le u_2(t)\le z_2(t)+y_2^*(2^{-n_2})\chi_{n_2},\;\;0\le t\le 1.$$
Proceeding in the same way, we find the numbers 
$n_0=1<n_1<n_2<\dots$ and the functions
\begin{equation}\label{eq21}
 u_i:=y_i^*\chi_{(2^{-n_i},2^{-n_{i-1}}]}\;\;\mbox{and}\;\;z_i:=\sum_{k=n_i+1}^{n_{i+1}-1} y_i^*(2^{-k})\chi_k
\end{equation}
such that for all $i=1,2,\dots$ we have
\begin{equation}\label{eq22}
 y_i^*(2^{-n_i})\varphi_Y(2^{-n_i})<\varepsilon_i,
\end{equation}
\begin{equation}\label{eq23}
 \|y_i^*-u_i\|_Y<\varepsilon_i,
\end{equation}
and
\begin{equation}\label{eq24}
 \sigma_{1/2}z_i(t)\le u_i(t)\le z_i(t)+y_i^*(2^{-n_i})\chi_{n_i},\;\;0\le t\le 1.
\end{equation}

Now, set $\bar{u}_i(t)=0$ if $0\le t<\alpha_i$ or $\beta_i <t\le 1,$ and
$\bar{u}_i(t)=u_i(t-\alpha_i)$ if $\alpha_i\le t\le \beta_i$ $(i=1,2,\dots).$
Since the functions $u_i$ (resp. $\bar{u}_i$) are pairwise disjoint and, by \eqref{eq23}, 
$$
\|y_i-\bar{u}_i\|_Y=\|y_i^*-u_i\|_Y<\varepsilon_i,$$
thanks to the well-known principle of small perturbations of basis
(see e.g. \cite[Theorem~1.3.9]{AK}), choosing sufficiently small $\varepsilon_i,$ 
we have
\begin{equation}\label{eq24a}
 \Big\|\sum_{i=1}^\infty c_iy_i\Big\|_Y\asymp  \Big\|\sum_{i=1}^\infty c_iu_i\Big\|_Y,
\end{equation}
for all $c_i\in\hj.$ Moreover, taking into account inequalities \eqref{eq24} and the fact that
the functions $\sigma_{1/2}z_i$ (resp. $z_i+y_i^*(2^{-n_i})\chi_{n_i}$) are pairwise disjoint, we obtain
$$
\sum_{i=1}^\infty c_i \sigma_{1/2}z_i(t)\le \sum_{i=1}^\infty c_i u_i(t)\le
\sum_{i=1}^\infty c_i (z_i(t)+y_i^*(2^{-n_i})\chi_{n_i}(t)),\;\;0\le t\le 1,$$
whence for any $c_i\in\hj$
$$
\Big\|\sigma_{1/2}\Big(\sum_{i=1}^\infty c_iz_i\Big)\Big\|_Y\le \Big\|\sum_{i=1}^\infty c_i u_i\Big\|_Y\le
\Big\|\sum_{i=1}^\infty c_i (z_i+y_i^*(2^{-n_i})\chi_{n_i})\Big\|_Y.$$
Further, since  $\varepsilon_i>0$ may be chosen arbitrarily small, from inequalities
\eqref{eq22}, as above, we infer that
$$
\Big\|\sum_{i=1}^\infty c_i (z_i+y_i^*(2^{-n_i})\chi_{n_i})\Big\|_Y
\asymp \Big\|\sum_{i=1}^\infty c_iz_i\Big\|_Y.$$
Therefore, from the preceding inequality, combined together with the facts that $z=\sigma_{2}(\sigma_{1/2}z)$ 
and $\|\sigma_{2}\|_{Y\to Y}\le 2$ \cite[Corollary~1 after Theorem~2.4.5]{KPS}, it follows that
$$
\Big\|\sum_{i=1}^\infty c_iu_i\Big\|_Y\asymp \Big\|\sum_{i=1}^\infty c_iz_i\Big\|_Y,$$
whence, by \eqref{eq24a}, we obtain
$$
\Big\|\sum_{i=1}^\infty c_iy_i\Big\|_Y\asymp \Big\|\sum_{i=1}^\infty c_iz_i\Big\|_Y.$$
Since $\{z_i\}$ is a block basic sequence of $\{\chi_k\},$ by the first part of the proof (passing
to an appropriate subsequence), we have
$$
\Big\|\sum_{i=1}^\infty c_iz_i\Big\|_Y\asymp \|(c_i)\|_{l_p},$$
which completes the proof.
\end{proof}

\begin{lemma} \label{Orlicz function} [\cite[p.~41]{Kal} and \cite[Example~2]{AM08}]
For every $1<p<\infty$ there exists an Orlicz function $M$,
regularly varying at $\infty$ of order $p,$ such that 
the Orlicz space $L_M\not\in Int(M(\tilde{\psi}),\Lambda(\psi))$,
where $\psi(t)=1/M^{-1}(1/t),$ $0<t\le 1.$ 
%
\end{lemma}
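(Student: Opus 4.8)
The plan is to deduce the statement from the uniqueness result already established in Theorem \ref{Uniqueness}, so that the whole problem collapses to producing a single suitable Orlicz function. First I would record the indices: since $M$ is regularly varying at $\infty$ of order $p$, its Matuszewska--Orlicz indices both equal $p$, whence the Boyd indices satisfy $\alpha_{L_M}=\beta_{L_M}=1/p$. Combining this with the general inequality $\alpha_{L_M}\le\gamma_{\varphi_{L_M}}\le\delta_{\varphi_{L_M}}\le\beta_{L_M}$ gives $\gamma_\psi=\delta_\psi=1/p\in(0,1)$ for $\psi=\varphi_{L_M}$. By Lemma \ref{lemma: regularly varying functions} the space $L_M$ is $p$-disjointly homogeneous. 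Therefore, if $L_M$ belonged to $Int(M(\tilde\psi),\Lambda(\psi))$, Theorem \ref{Uniqueness} (applicable precisely because $0<\gamma_\psi\le\delta_\psi<1$) would force $L_M=\Lambda_{p,\psi}$ up to equivalence of norms. Thus it suffices to find, for the given $p$, a regularly varying Orlicz function $M$ of order $p$ for which the Orlicz space $L_M$ differs from the Lorentz space $\Lambda_{p,\psi}$.

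To separate these two spaces I would pass to their $K$-functional descriptions. As in the proof of Theorem \ref{Uniqueness}, $\Lambda_{p,\psi}=(L_1,L_\infty)^K_{l_p(2^k\psi(2^{-k}))}$, while Lemma \ref{prop: K-method} (using $\beta_{L_M}=1/p<1$) represents $L_M=(L_1,L_\infty)^K_{E(2^k\psi(2^{-k}))}$ with $\|(a_k)\|_E=\|\sum_k a_k\chi_k/\psi(2^{-k})\|_{L_M}$. A direct computation of this Luxemburg norm, using $1/\psi(2^{-k})=M^{-1}(2^k)=:u_k$ and $2^{-k}=1/M(u_k)$, identifies $E$ with the Nakano (Musielak--Orlicz) sequence space given by the modular $\sum_k N_k(|a_k|/\lambda)\le 1$, where $N_k(s)=M(u_ks)/M(u_k)$. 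Since the representing lattice in such a $K$-description is determined up to equivalence by the weight $2^k\psi(2^{-k})$ (the $K$-functionals $K(2^{-k},f;L_1,L_\infty)=\int_0^{2^{-k}}f^*$ realize arbitrary decreasing sequences), one gets $L_M=\Lambda_{p,\psi}$ if and only if $E\asymp l_p$, i.e.\ if and only if the functions $N_k$ are uniformly equivalent to $s^p$; alternatively this separation can be seen by directly comparing the two norms on suitable disjointly supported step functions.

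The remaining and genuinely delicate step, which I expect to be the main obstacle, is the explicit construction of $M$. Regular variation of order $p$ only guarantees $N_k(s)\to s^p$ as $k\to\infty$ for each fixed $s$; one must choose $M$ with a slowly varying factor that oscillates so persistently that the convergence fails to be uniform in $k$, making $E\not\asymp l_p$ (equivalently, making the Nakano space $E$ fail to be symmetric). This is exactly the phenomenon isolated in \cite[p.~41]{Kal} and worked out for the present purpose in \cite[Example~2]{AM08}; I would simply take that Orlicz function. With such $M$ one has $L_M\ne\Lambda_{p,\psi}$, and the reduction of the first paragraph then yields $L_M\notin Int(M(\tilde\psi),\Lambda(\psi))$, as required.
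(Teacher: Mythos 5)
Your reduction is sound, and it is a genuinely different route from the paper's. The paper proves the lemma by exhibiting $M$ outright: take $v_1>v_2>\dots$, $v_n\to 0$, set $g(t)=p$ for $t\le 1$, $g(t)=p+(-1)^nv_n$ on $(2^{n-1},2^n]$, $f(u)=\int_0^u g(t)\,dt$, $M(u)=e^{f(\log u)}$, and then delegates the verification of $L_M\notin Int(M(\tilde{\psi}),\Lambda(\psi))$ to \cite[Example~2]{AM08}, where it is carried out via Pustylnik's characterization \cite[Theorem~2.1]{Pust}, i.e.\ by showing that the representing sequence lattice fails to be \emph{symmetric}. You instead route the non-interpolation claim through Theorem \ref{Uniqueness} combined with Lemma \ref{lemma: regularly varying functions}: regular variation of order $p$ gives $\alpha_{L_M}=\beta_{L_M}=\gamma_\psi=\delta_\psi=1/p$ and $p$-disjoint homogeneity of $L_M$, so membership in $Int(M(\tilde{\psi}),\Lambda(\psi))$ would force $L_M=\Lambda_{p,\psi}$, and only the inequivalence $L_M\ne\Lambda_{p,\psi}$ must be checked. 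Your identification of the representing lattice $E$ (via Lemma \ref{prop: K-method}, legitimate since $\beta_{L_M}=1/p<1$) as the Nakano space with $N_k(s)=M(u_ks)/M(u_k)$, $u_k=M^{-1}(2^k)$, agrees with the paper's Lemma \ref{space G}, and showing $E\not\asymp l_p$ is formally a weaker task than showing $E$ is not symmetric. The price is heavier machinery: Theorem \ref{Uniqueness} (hence Proposition \ref{Uniqueness1} and Brudnyi's reiteration theorem), plus, for the ``only if'' half of your criterion, the uniqueness of the representing lattice (Proposition 2 of \cite{A3}, which the paper itself invokes in the proof of Theorem \ref{theor 5}) or your alternative direct comparison on dyadic step functions, which requires $0<\gamma_\psi\le\delta_\psi<1$.

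The one substantive omission is that you never exhibit $M$: existence is the entire content of the lemma, and ``take the function from \cite[p.~41]{Kal} and \cite[Example~2]{AM08}'' rests your proof on precisely the citations to which the lemma is attributed --- though, to be fair, the paper's own proof sends the reader to the same place for the verification. To close this, record the oscillating-exponent function above and check the two facts your scheme needs: $v_n\to 0$ yields regular variation of order $p$, while choosing $v_n\to 0$ slowly enough (e.g.\ $2^nv_n\to\infty$, so that the exponent deviates from $p$ by about $v_n2^{n-1}$ across the $n$-th dyadic block) destroys any uniform-in-$k$ equivalence of $N_k(s)$ to $s^p$ on $(0,1]$, whence $E\ne l_p$. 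With that supplement your argument is complete; as written, it is a correct strategy with the key construction imported rather than proved.
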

\begin{proof}
Let $v_n \in (0,1),\;v_1 > v_2 > \dots > 0$ and $v_n \rightarrow 0$. The function 
$g: {\mathbb R} \rightarrow (0, \infty)$ is defined as follows: 
$g(t) = p$ for $t \leq 1$, and $g(t) = p + (-1)^n v_n$
if $2^{n-1} < t \leq 2^n, n = 1, 2, \dots$. Denote
$$
f(u) = \int_0^u g(t) dt ~~{\rm and} ~~M(u) = e^{f(\log u)} ~(u > 0).
$$ 
Then, it is not hard to check that $M$ satisfies all required conditions
(see also \cite[Example~2]{AM08}).
\end{proof}

Now, let $1<p<\infty$ and let an Orlicz function $M$ satisfy
the conditions of Lemma \ref{Orlicz function}.
Clearly, we can assume that $M(1)=1.$
It is not hard to check that $\alpha_{L_M}=\beta_{L_M}=\gamma_\psi=\delta_\psi=1/p$ (see e.g. \cite[p.~139]{LT2}). 
Hence, since $p>1$, by Lemma \ref{prop: K-method}, we have
\begin{equation}\label{eq100}
 L_M = \left(L_{1}, L_{\infty} \right)_{G(\psi(2^{-k}) 2^{k})}^{K},
\end{equation}
where $G$ is the Banach lattice of sequences with the norm
\begin{equation}\label{eq101}
\| (a_{k})\|_{G} = \Big\| \sum_{k=1}^{\infty} a_{k} \frac{\chi_{k}}{\psi(2^{-k})} 
\Big\|_{L_M}.
\end{equation}

Denote by $e_k$ $(k=1,2,\dots)$ the vectors of the unit basis in sequence spaces and by
$P_i,$ $i=1,2,\dots,$ the shift operators, which are defined as follows:
$$
P_i\Big(\sum_{k=1}^\infty a_ke_k\Big)=\sum_{k=1}^\infty a_{k+i}e_k.$$

\begin{lemma} \label{space G}
For arbitrary $\varepsilon>0$ there is $B=B(\varepsilon)>0$ such that
$$
\|P_i\|_{G\to G}\le B2^{\varepsilon i},\;\;i=1,2,\dots$$
\end{lemma}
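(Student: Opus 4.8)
The plan is to realize the shift $P_i$ on $G$ as a dilation on the Orlicz space $L_M$, up to a scalar weight controlled by $m_\psi$. Writing $g=\sum_{k\ge 1}a_k\chi_k/\psi(2^{-k})$, the definition of the norm in $G$ gives $\|(a_k)\|_G=\|g\|_{L_M}$, while $\|P_i(a_k)\|_G=\|h\|_{L_M}$ with $h=\sum_{k\ge 1}a_{k+i}\chi_k/\psi(2^{-k})$. The first step is the elementary observation that on each interval $(2^{-k},2^{-k+1}]$ the dilation $\sigma_{2^i}g$ takes the value $a_{k+i}/\psi(2^{-(k+i)})$, because $\sigma_{2^i}g(s)=g(s/2^i)$ maps this interval onto $(2^{-(k+i)},2^{-(k+i)+1}]$. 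Comparing with $h$, which equals $a_{k+i}/\psi(2^{-k})$ on the same interval, we obtain the pointwise relation $h=\frac{\psi(2^{-(k+i)})}{\psi(2^{-k})}\,\sigma_{2^i}g$ on $(2^{-k},2^{-k+1}]$, and since $\psi$ is increasing this ratio is at most $m_\psi(2^{-i})$ by the very definition of $m_\psi$. Hence $|h|\le m_\psi(2^{-i})\,|\sigma_{2^i}g|$ a.e., and because $L_M$ is an r.i. space this pointwise domination transfers to the norms, yielding $\|P_i(a_k)\|_G\le m_\psi(2^{-i})\,\|\sigma_{2^i}\|_{L_M\to L_M}\,\|(a_k)\|_G$.

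It then remains to estimate the two factors separately. Since $\beta_{L_M}=1/p$, for any $\varepsilon>0$ there is a constant with $\|\sigma_{2^i}\|_{L_M\to L_M}\le C_\varepsilon\,2^{i(1/p+\varepsilon)}$ for all $i\ge 1$; since $\gamma_\psi=1/p$, there is likewise a constant with $m_\psi(2^{-i})\le C_\varepsilon'\,2^{-i(1/p-\varepsilon)}$ for all $i\ge 1$. Both are the standard consequences of the definitions of the upper Boyd index and of the lower dilation index respectively, and they hold for every $i$ after enlarging the constants to absorb finitely many small indices. Multiplying the two estimates, the leading exponents $1/p$ cancel and one is left with $\|P_i\|_{G\to G}\le C_\varepsilon C_\varepsilon'\,2^{2\varepsilon i}$; applying this with $\varepsilon/2$ in place of $\varepsilon$ gives the asserted bound $\|P_i\|_{G\to G}\le B\,2^{\varepsilon i}$.

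The only genuine subtlety is that each of the two factors individually grows (resp. decays) exponentially, like $2^{\pm i/p}$, so that a crude use of the convexity of $M$ (for instance the inequality $M(cx)\le cM(x)$ for $0\le c\le 1$) is far too lossy and would produce only an exponential bound. What makes the argument work is the exact matching of the indices $\beta_{L_M}=\gamma_\psi=1/p$, guaranteed by the regular variation of $M$ of order $p$, which forces the two exponential contributions to cancel and leaves behind only the arbitrarily small surplus $2^{2\varepsilon i}$. Thus the main step is really the geometric identification of the shift with a dilation (together with the weight bounded by $m_\psi$), after which the index bookkeeping is routine.
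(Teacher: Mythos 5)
Your proof is correct, and it takes a genuinely different route from the paper's. The paper unravels the Luxemburg norm of $G$ into the modular condition $\sum_{k} M\bigl(|a_k|M^{-1}(2^k)/\lambda\bigr)2^{-k}\le 1$, derives from $\gamma_\psi=\delta_\psi=1/p$ two inequalities for $M^{-1}$ and $M$ (its \eqref{eq26} and \eqref{eq27}), and then estimates the shifted modular by splitting the index set into the part $S$ where the rescaled argument of $M$ exceeds $1$ (where those inequalities and convexity apply) and its complement (whose contribution is at most $\sum_{j\ge 1}2^{-j}$), converting back to the norm by convexity. You instead identify the shift geometrically: with $g=\sum_k a_k\chi_k/\psi(2^{-k})$, the shifted element $h=\sum_k a_{k+i}\chi_k/\psi(2^{-k})$ equals $\bigl(\psi(2^{-k-i})/\psi(2^{-k})\bigr)\,\sigma_{2^i}g$ on each interval $(2^{-k},2^{-k+1}]$, hence $|h|\le m_\psi(2^{-i})\,|\sigma_{2^i}g|$ a.e., and therefore $\|P_i\|_{G\to G}\le m_\psi(2^{-i})\,\|\sigma_{2^i}\|_{L_M\to L_M}$; the two index bounds $m_\psi(2^{-i})\le C'_\varepsilon 2^{-i(1/p-\varepsilon)}$ and $\|\sigma_{2^i}\|_{L_M\to L_M}\le C_\varepsilon 2^{i(1/p+\varepsilon)}$ (valid for all $i$ after absorbing finitely many small indices, since $m_\psi\le 1$ on $(0,1]$ and $\|\sigma_t\|\le\max(1,t)$) then cancel to leading order. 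Your argument is shorter and more conceptual: it isolates the real mechanism, namely the coincidence $\beta_{L_M}=\gamma_\psi=1/p$ guaranteed by the regular variation of $M$, and it generalizes verbatim to any r.i. space $X$ in place of $L_M$, giving $\|P_i\|\le C 2^{i(\beta_X-\gamma_{\varphi_X}+\varepsilon)}$. What the paper's modular computation buys in exchange is independence from the Boyd index of $L_M$: it uses only the dilation indices of $\psi$, i.e., information about $M^{-1}$ itself, at the cost of a longer calculation; your route leans on the quoted fact $\beta_{L_M}=1/p$, which the paper states (with reference) immediately before the lemma, so this is a legitimate ingredient.
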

\begin{proof}
From the definition of $G$ it follows that
$$
\| (a_{k})\|_{G}=\inf\left\{\lambda>0:\,\sum_{k=1}^{\infty} M\left(\frac{|a_{k}|M^{-1}(2^k)}{\lambda}\right)2^{-k}\le 1\right\}.
$$ 
Therefore, if $\|(a_{k})\|_{G}=1$, we have
\begin{equation}\label{eq200}
 \sum_{k=1}^{\infty} M(|a_{k}|M^{-1}(2^k))2^{-k}\le 1.
\end{equation}
Let us estimate 
\begin{equation}\label{eq25}
 \|P_i (a_{k})\|_{G}=\inf\left\{\lambda>0:\,\sum_{j=i+1}^{\infty} M\left(\frac{|a_{j}|M^{-1}(2^{j-i})}{\lambda}\right)2^{i-j}\le 1\right\}
\end{equation}
for each $i=1,2,\dots$

First, since $\gamma_\psi=1/p$, for arbitrary $0<\eta<1/p$ there is $C_1=C_1(\eta)>0$ such that
$$
\sup_{0<s\le 1}\frac{\psi(st)}{\psi(s)}\le C_1t^{1/p-\eta},\;\;0<t\le 1,$$
or equivalently (because of $\psi(t)=1/M^{-1}(1/t)$),
\begin{equation}\label{eq26}
u^{1/p-\eta}M^{-1}(v)\le C_1 M^{-1}(uv),\;\;u,v\ge 1.
\end{equation}
Similarly, as $\delta_\psi=1/p$, it can be found $C_2=C_2(\eta)>0$, for which
$$
\sup_{0<s\le 1,t\ge 1,st\le 1}\frac{\psi(st)}{\psi(s)}\le C_2t^{1/p+\eta},\;\;t\ge 1,$$
or 
$$
u^{1/p+\eta}M^{-1}(v)\le C_2 M^{-1}(uv),\;\;u\le 1,v\ge 1, uv\ge 1.
$$
Hence, for arbitrary $v\ge z\ge 1$ 
$$
\frac{M^{-1}(v)}{v^{1/p+\eta}}\le C_2\frac{M^{-1}(z)}{z^{1/p+\eta}},$$
and, taking into account that $M$ increases and $M(1)=1,$ we obtain
\begin{equation}\label{eq27}
M(y)\le C_2\left(\frac{y}{x}\right)^{\frac{p}{p\eta+1}}M(x)\;\;\mbox{if}\;\;x\ge y\ge 1.
\end{equation}


Denote by $S$ the set of all positive integers $j\ge i+1$ such that
$|a_{j}|M^{-1}(2^{j-i})C_1^{-1}C_2^{-1}>1.$ Then, for every $j\in S$
from \eqref{eq26}, \eqref{eq27}, and the convexity 
of the function $M$ it follows that
\begin{eqnarray*}
 M\left(\frac{|a_{j}|M^{-1}(2^{j-i})}{C_1C_2}\right) &\le&
M\left(\frac{|a_{j}|2^{\frac{i(p\eta-1)}{p}}M^{-1}(2^{j})}{C_2}\right)\\
 &\le& C_22^{\frac{i(p\eta-1)}{1+p\eta}} M\left(\frac{|a_{j}|M^{-1}(2^j)}{C_2}\right)\\
&=& 2^{(\varepsilon -1) i}M(|a_{j}|M^{-1}(2^j)),
\end{eqnarray*}
where $\varepsilon:=2p\eta/(p\eta +1)$ can be made arbitrarily small together with $\eta.$
Therefore, by \eqref{eq200}, we have
\begin{eqnarray*}
\sum_{j=i+1}^{\infty} M\left(\frac{|a_{j}|M^{-1}(2^{j-i})}{C_1C_2}\right)2^{i-j} &=&
\sum_{j\in S} M\left(\frac{|a_{j}|M^{-1}(2^{j-i})}{C_1C_2}\right)2^{i-j}\\
&+&\sum_{j\not\in S} M\left(\frac{|a_{j}|M^{-1}(2^{j-i})}{C_1C_2}\right)2^{i-j}\\
&\le& 2^{\varepsilon i}\sum_{j=i+1}^{\infty}M(|a_{j}|M^{-1}(2^j))2^{-j}+\sum_{j=1}^{\infty}2^{-j}\le 2\cdot 2^{\varepsilon i}.
\end{eqnarray*}
Finally, using the convexity of $M$ once more and formula \eqref{eq25}, we obtain 
$$
\|P_i (a_{k})\|_{G} \le 2^{\varepsilon i}\cdot 2C_1C_2\|(a_j)\|_G,$$
that is, desired estimate holds with $B:=2C_1C_2.$
\end{proof}

\begin{proof}[Proof of Theorem \ref{theor 5}]
If $M$ is an Orlicz function satisfying the conditions of 
Lemma \ref{Orlicz function}, the Orlicz space $L_M$ is not ultrasymmetric (see Preliminaries). 
Then, since $0<\gamma_\psi=\delta_\psi=1/p<1,$ by \cite[Theorem 2.1]{Pust}, 
from \eqref{eq100} and \eqref{eq101} it follows that
the Banach lattice $G$ is not symmetric. At the same time, $G$ is separable together with 
$L_M$. Moreover, applying Lemma \ref{lemma: regularly varying functions}, we see that
$L_M$ is a $p$-disjointly homogeneous space.

Now, we define the space $Y$ as follows:
\begin{equation}\label{I:P2b}
 Y: = \left(L_{1}, L_{\infty} \right)_{G(\varphi(2^{-k}) 2^{k})}^{K}.
\end{equation}
Since $\|e_k\|_G=1,$ $k=1,2,\dots$, we have $l_1\subset G\subset l_\infty,$
whence $\Lambda(\varphi)\subset Y\subset M(\tilde{\varphi}).$
Moreover, $Y\not\in Int(M(\tilde{\varphi}),\Lambda(\varphi))$ (equivalently, $Y$ is not ultrasymmetric), because of the
Banach lattice $G$ is not symmetric \cite[Theorem 2.1]{Pust}, and $Y$ is separable together with $G$.
Let us check that $\beta_Y<1.$

By definition of the real interpolation spaces and formula \eqref{K-funct}, for arbitrary $i\in\fg$ we have
\begin{eqnarray*}
\|\sigma_{2^i} x\|_Y &=& \Big\|\Big(\int_0^{2^{-k}}x^*(2^{-i}s)\,ds\cdot\varphi(2^{-k})2^k\Big)_{k=1}^\infty\Big\|_G\\
&=& \Big\|\Big(2^{i+k}\int_0^{2^{-i-k}}x^*(s)\,ds\cdot\varphi(2^{-k})\Big)_{k=1}^\infty\Big\|_G.
\end{eqnarray*}
Furthermore, since $\delta_\varphi<1$, for some $C>0$ and $\eta>0$
$$
\sup_{0<s\le 1,st\le 1}\frac{\varphi(st)}{\varphi(s)}\le Ct^{1-\eta},\;\;t\ge 1.$$
Therefore, by Lemma \ref{space G}, with $\varepsilon=\eta/2$, we obtain
\begin{eqnarray*}
 \|\sigma_{2^i} x\|_Y &\le& C2^{i(1-\eta)}\Big\|\Big(2^{i+k}\int_0^{2^{-(i+k)}}x^*(s)\,ds\cdot\varphi(2^{-(i+k)})\Big)_{k=1}^\infty\Big\|_G\\
&\le& BC2^{i(1-\eta/2)}\Big\|\Big(2^{k}\int_0^{2^{-k}}x^*(s)\,ds\cdot\varphi(2^{-k})\Big)_{k=1}^\infty\Big\|_G=BC2^{i(1-\eta/2)}\|x\|_Y,
\end{eqnarray*}
whence $\beta_Y<1.$

Thus, we may apply Lemma \ref{prop: K-method} and thereby get
\begin{equation}\label{I:P2a}
Y = \left(L_{1}, L_{\infty} \right)_{E(\{\varphi(2^{-k}) 2^{k}\}_{k=1}^{\infty})}^{K},
\end{equation}
where $E$ is the Banach sequence lattice with the norm
\begin{equation*}
\| (a_{k})\|_{E} = \Big\| \sum_{k=1}^{\infty} a_{k} \frac{\chi_{k}}{\varphi_Y(2^{-k})} 
\Big\|_{Y}.
\end{equation*}
Comparing \eqref{I:P2b} with \eqref{I:P2a}, by Proposition~2 from the paper
\cite{A3}, we conclude that $E=G$ (with equivalence of norms), i.e., 
$$
\Big\| \sum_{k=1}^{\infty} a_{k} \frac{\chi_{k}}{\varphi_Y(2^{-k})} 
\Big\|_{Y}\asymp \Big\| \sum_{k=1}^{\infty} a_{k} \frac{\chi_{k}}{\psi(2^{-k})} 
\Big\|_{L_M}.
$$
Since $L_M$ is a $p$-disjointly homogeneous, from Proposition 
\ref{prop:  main} it follows that $Y$ also has this property, and the proof is completed.
\end{proof}

\section{Sequences of pairwise disjoint functions in the spaces $(M(\tilde{\varphi}),\Lambda(\varphi))_{\theta,p}$}

If the dilation indices of an increasing concave function $\varphi$ are non-trivial, 
i.e., $0<\gamma_\varphi\le\delta_\varphi<1,$ then applying the reiteration theorem 
\cite[Theorem~4.3.1]{BK} precisely in the same way as in the proof of Theorem
\ref{Uniqueness}, it can be shown that, for every $1<p<\infty$, the real interpolation space
$(M(\tilde{\varphi}),\Lambda(\varphi))_{1/p,p}$ coincides with the complex interpolation
space $[M(\tilde{\varphi}),\Lambda(\varphi)]_{1/p}$ (and hence
with $\Lambda_{p,\varphi}$).
Thus, in this case the space $[M(\tilde{\varphi}),\Lambda(\varphi)]_{1/p}$
is $p$-disjointly homogeneous. In this section we investigate some properties
of sequences of pairwise disjoint functions in the spaces
$(M(\tilde{\varphi}),\Lambda(\varphi))_{\theta,p},$ $0<\theta<1,$ $1\le p<\infty,$
for an arbitrary increasing concave function $\varphi.$

Our first result, a version of the well-known Levy theorem on the existence
of $l_p$-subspaces in the real interpolation spaces $(X_0,X_1)_{\theta,p}$,  
\cite{Levy} or \cite[Theorem~4.6.22]{BK} (in a more general setting, see
\cite{Mast} and \cite {A1}),
concerns even with a more general case when the couple 
$(M(\tilde{\varphi}),\Lambda(\varphi))$ is replaced with a couple $(X_0,X_1)$
such that $X_1\subset X_0.$ Given $0<\theta<1$ and $1\le p<\infty,$ we denote
$X_{\theta,p}:=(X_0,X_1)_{\theta,p}.$

%


\begin{theor}\label{theor 2}
Suppose that pairwise disjoint functions $x_n\in X_{\theta,p}$, 
$\|x_n\|_{\theta,p}=1$ $(n=1,2,\dots)$ and $\lim_{n\to\infty}{\|x_n\|_{X_0}}=0.$

Then for arbitrary $\varepsilon>0$ there is a subsequence $\{z_j\}\subset \{x_n\}$ such that
for all $\lambda_j\in\hj$ $(j=1,2,\dots)$ we have
\begin{equation}\label{eq10.5a}
(1-\varepsilon)\left(\sum_{j=1}^{\infty}|\lambda_j|^p\right)^{1/p}\le
\Big\|\sum_{j=1}^{\infty}\lambda_jz_j\Big\|_{\theta,p}\le
(1+\varepsilon)\left(\sum_{j=1}^{\infty}|\lambda_j|^p\right)^{1/p}.
\end{equation}
%
%
\end{theor}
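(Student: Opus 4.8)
The plan is to exploit the explicit formula \eqref{eq1000} for the norm of $\pq$ and reduce everything to the behaviour of the sequences $a_n^{(j)}:=2^{n\theta}K(2^{-n},z_j;X_0,X_1)$, whose $\ell_p$-norm equals $\|z_j\|_{\theta,p}$. Two elementary properties of the $K$-functional will drive the argument. First, the usual subadditivity $K(t,\sum_j\lambda_jz_j;X_0,X_1)\le\sum_j|\lambda_j|K(t,z_j;X_0,X_1)$, valid for every couple. Second, a reverse, lattice-type monotonicity that genuinely uses disjointness: since $X_1\subset X_0$ are function lattices and the $z_j$ have pairwise disjoint supports, restricting any decomposition of $\sum_j\lambda_jz_j$ to the support of a single $z_j$ and invoking the ideal property of the lattice norms gives, for each fixed $j$,
\[
K(t,\textstyle\sum_i\lambda_iz_i;X_0,X_1)\ge |\lambda_j|\,K(t,z_j;X_0,X_1),\qquad t>0.
\]
I expect this second inequality to be the conceptual heart of the lower estimate, and it is where the hypothesis of pairwise disjointness is essential.

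The construction is a gliding-hump selection producing a subsequence $\{z_j\}$ together with a partition of $\mathbb{N}$ into consecutive blocks $I_j=\{N_{j-1}+1,\dots,N_j\}$ (with $N_0=0$) in which the profiles concentrate. For the \emph{head}, the estimate $K(2^{-n},x;X_0,X_1)\le\|x\|_{X_0}$ yields $\sum_{n\le N_{j-1}}(a_n^{(j)})^p\le\|z_j\|_{X_0}^p\sum_{n\le N_{j-1}}2^{n\theta p}$; since $N_{j-1}$ is already fixed at the $j$-th step and $\|x_n\|_{X_0}\to0$ by assumption, I may pick $z_j$ far enough out in the sequence to make this quantity $<\varepsilon_j$. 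For the \emph{tail}, convergence of the defining series for $\|z_j\|_{\theta,p}=1$ then permits choosing $N_j>N_{j-1}$ with $\sum_{n>N_j}(a_n^{(j)})^p<\varepsilon_j$. Thus each normalized profile carries all but $\varepsilon_j$ of its $\ell_p^p$-mass inside its own block $I_j$, and the blocks partition $\mathbb{N}$.

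Granting this, the lower bound is immediate: by the reverse monotonicity $2^{n\theta}K(2^{-n},\sum_i\lambda_iz_i)\ge|\lambda_j|a_n^{(j)}$ for $n\in I_j$, and summing $p$-th powers over the disjoint blocks gives $\|\sum_j\lambda_jz_j\|_{\theta,p}^p\ge\sum_j|\lambda_j|^p\sum_{n\in I_j}(a_n^{(j)})^p\ge(1-\sup_j\varepsilon_j)\sum_j|\lambda_j|^p$. For the upper bound, subadditivity gives $2^{n\theta}K(2^{-n},\sum_j\lambda_jz_j)\le\sum_j|\lambda_j|a_n^{(j)}$; writing each profile $a^{(j)}=w^{(j)}+r^{(j)}$ with $w^{(j)}$ its restriction to $I_j$, the vectors $w^{(j)}$ are disjointly supported with $\|w^{(j)}\|_{\ell_p}\le1$, while $\|r^{(j)}\|_{\ell_p}\le\varepsilon_j^{1/p}$, so a triangle-inequality split together with H\"older's inequality yields $\|\sum_j\lambda_jz_j\|_{\theta,p}\le\big(1+(\sum_j\varepsilon_j^{p'/p})^{1/p'}\big)(\sum_j|\lambda_j|^p)^{1/p}$ (with the obvious modification $\sup_j\varepsilon_j$ when $p=1$). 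Choosing the $\varepsilon_j$ summable and small enough turns both one-sided bounds into \eqref{eq10.5a}. The main obstacle is thus purely organizational: arranging the two-sided block concentration simultaneously within a single selection and tracking the perturbation constants so that they compound to $1\pm\varepsilon$ rather than to mere fixed equivalence constants; the two underlying $K$-functional inequalities are themselves elementary.
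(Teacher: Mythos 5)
Your proof is correct in the setting where the theorem is actually used, but it takes a genuinely different route from the paper's. The paper argues by a compactness-based inductive selection: it first proves a two-element almost-orthogonality statement (Proposition \ref{theor 1}) --- for any compact subset $K$ of the unit sphere of $X_{\theta,p}$ one can find $x_{k_0}$ far out in the sequence so that $\|\lambda_1 x+\lambda_2 x_{k_0}\|_{\theta,p}=(1\pm\varepsilon)(|\lambda_1|^p+|\lambda_2|^p)^{1/p}$ for all $x\in K$ --- using the isometric embedding $Sx=\bigl(2^{n\theta}K(2^{-n},x;X_0,X_1)\bigr)_n$ of $X_{\theta,p}$ into $\ell_p$, compactness of $S(K)$ to get a uniform tail cut-off $m$, the equivalence of $\{Sx\}_n$ (for each fixed $n$) with $\|x\|_{X_0}$ to kill the head of $Sx_{k_0}$, and two numerical lemmas from \cite{Mast}; the theorem then follows by induction over the finite-dimensional unit spheres of ${\rm span}\,\{z_1,\dots,z_{l-1}\}$, with constants $\prod_j(1\pm\theta_j)$. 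Notably, that argument never uses disjointness: it works for any normalized sequence with $\|x_n\|_{X_0}\to 0$ in any embedded Banach couple (essentially Levy's theorem). Your gliding-hump construction replaces the compactness and induction machinery by direct block concentration of the profiles $a^{(j)}$, and your upper bound (subadditivity, triangle inequality, H\"older) is likewise disjointness-free; but your lower bound rests on the reverse inequality $K(t,\sum_i\lambda_iz_i;X_0,X_1)\ge|\lambda_j|\,K(t,z_j;X_0,X_1)$, which requires $X_0,X_1$ to be Banach function lattices with the ideal property, so that multiplication by $\chi_{{\rm supp}\,z_j}$ is contractive in both norms simultaneously --- an assumption implicit in speaking of ``pairwise disjoint functions'' and certainly satisfied for the couple $(M(\tilde{\varphi}),\Lambda(\varphi))$, but one the paper's proof does not need. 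So your approach buys a shorter, more transparent argument that genuinely exploits the disjointness hypothesis, while the paper's buys greater generality (its disjointness hypothesis is in fact superfluous) and a scheme that extends, per Remark \ref{rem: auxiliary}, to yield $(1+\varepsilon)$-complemented spans. Minor bookkeeping: your block mass is $1-2\varepsilon_j$ (head plus tail), not $1-\varepsilon_j$, hence $\|r^{(j)}\|_{\ell_p}\le(2\varepsilon_j)^{1/p}$; this affects nothing.
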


The main ingredient in the proof of Theorem \ref{theor 2} is the following assertion.

\begin{prop}\label{theor 1}
Let $0<\theta<1,$ $1\le p<\infty$ and let $K$ be a compact subset of
the unit sphere in $X_{\theta,p}.$ Moreover, suppose that 
$x_k\in X_{\theta,p},$ $\|x_k\|_{\theta,p}=1$ $(k=1,2,\dots)$ and 
$\lim_{k\to\infty}{\|x_k\|_{X_0}}=0.$
Then, for arbitrary $\varepsilon\in (0,1)$ there exists 
$k_0\in \mathbb{N}$ such that 
\begin{equation}\label{eq3a}
(1-\varepsilon)(|\lambda_1|^p+|\lambda_2|^p)^{1/p}\le
\|\lambda_1x+\lambda_2x_{k_0}\|_{\theta,p}\le(1+\varepsilon)(|\lambda_1|^p+|\lambda_2|^p)^{1/p}
\end{equation}
for each $x\in K$ and $\lambda_1,\lambda_2\in\hj.$
\end{prop}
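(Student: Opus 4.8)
The plan is to exploit the description \eqref{eq1000} of the norm of $X_{\theta,p}$ as an $\ell_p$-norm of a sequence of weighted $K$-functionals, and to show that $x_{k_0}$ can be chosen so that its $\ell_p$-mass and the mass of every $x\in K$ live, up to an arbitrarily small error, on complementary blocks of indices. Write $a_n(f):=2^{n\theta}K(2^{-n},f;X_0,X_1)$, so that $\|f\|_{\theta,p}=\|(a_n(f))_{n\ge 1}\|_{\ell_p}$ by \eqref{eq1000}. Since $K(2^{-n},\cdot)$ is a norm on $X_0+X_1$, one has $|a_n(f)-a_n(g)|\le a_n(f-g)$, so the (positively homogeneous, subadditive) map $f\mapsto (a_n(f))_n$ is $1$-Lipschitz from $X_{\theta,p}$ into $\ell_p$. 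Consequently the image of the compact set $K$ is compact in $\ell_p$, and the strategy is: (i) use compactness of $K$ to push the tails of $(a_n(x))_n$ uniformly onto indices $n>N$; (ii) use $\lim_k\|x_k\|_{X_0}=0$ to push the head of $(a_n(x_{k_0}))_n$ off the indices $n\le N$; (iii) conclude that $x$ and $x_{k_0}$ behave like disjointly supported unit vectors in $\ell_p$, which yields \eqref{eq3a}.

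For step (i), since a subset of $\ell_p$ can be compact only if its tails vanish uniformly, given $\delta>0$ there is $N\in\fg$ with $\sup_{x\in K}\sum_{n>N}a_n(x)^p<\delta^p$; as $\|x\|_{\theta,p}=1$, this gives $\sum_{n\le N}a_n(x)^p>1-\delta^p$ for every $x\in K$. For step (ii), the hypothesis $X_1\subset X_0$ yields $X_0+X_1=X_0$ and hence $K(t,f)\le\|f\|_{X_0}$ for all $t>0$; therefore $a_n(x_k)\le 2^{n\theta}\|x_k\|_{X_0}$, so for the $N$ just fixed $\sum_{n\le N}a_n(x_k)^p\le \|x_k\|_{X_0}^p\sum_{n\le N}2^{n\theta p}\to 0$ as $k\to\infty$. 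Choosing $k_0$ with $\sum_{n\le N}a_n(x_{k_0})^p<\delta^p$ and recalling $\|x_{k_0}\|_{\theta,p}=1$, we also get $\sum_{n>N}a_n(x_{k_0})^p>1-\delta^p$. Thus $(a_n(x))_n$ and $(a_n(x_{k_0}))_n$ each carry all but a $\delta$-fraction of their norm on the complementary blocks $\{n\le N\}$ and $\{n>N\}$.

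It remains to turn this near-disjointness into \eqref{eq3a}; by homogeneity it suffices to treat $(\lambda_1,\lambda_2)$ ranging over the compact unit sphere of $\ell_p^2$. For the upper bound, subadditivity gives $a_n(\lambda_1x+\lambda_2x_{k_0})\le|\lambda_1|a_n(x)+|\lambda_2|a_n(x_{k_0})$; splitting the $\ell_p$-norm over $\{n\le N\}$ and $\{n>N\}$ and applying the triangle inequality on each block, one bounds $\|\lambda_1x+\lambda_2x_{k_0}\|_{\theta,p}$ by $\big[(|\lambda_1|+|\lambda_2|\delta)^p+(|\lambda_1|\delta+|\lambda_2|)^p\big]^{1/p}$, which is $\le(1+\varepsilon)(|\lambda_1|^p+|\lambda_2|^p)^{1/p}$ once $\delta$ is small. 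For the lower bound I would use the reverse triangle inequality $a_n(\lambda_1x+\lambda_2x_{k_0})\ge\big(|\lambda_1|a_n(x)-|\lambda_2|a_n(x_{k_0})\big)_+$ on $\{n\le N\}$ and the symmetric estimate on $\{n>N\}$, together with the elementary inequality $\|(u-v)_+\|_{\ell_p}\ge\|u\|_{\ell_p}-\|v\|_{\ell_p}$ for nonnegative sequences, to obtain $\sum_{n\le N}a_n(\lambda_1x+\lambda_2x_{k_0})^p\ge\big(|\lambda_1|(1-\delta^p)^{1/p}-|\lambda_2|\delta\big)_+^p$ and the analogous tail bound; summing these two contributions recovers $(1-\varepsilon)(|\lambda_1|^p+|\lambda_2|^p)^{1/p}$ after shrinking $\delta$.

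The main obstacle is the lower bound: because $K(t,\cdot)$ is only subadditive, with no matching additivity, one cannot simply add the $K$-functionals of $\lambda_1x$ and $\lambda_2x_{k_0}$, and the cross terms must be absorbed using the block structure and the truncation inequality above. The uniformity over the compact set $K$ and over the $\ell_p^2$-sphere of coefficients is precisely what allows a single choice of $N$, and then of $k_0$, to serve simultaneously for all $x\in K$ and all $\lambda_1,\lambda_2$.
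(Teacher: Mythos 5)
Your proof is correct and follows essentially the same route as the paper's: the same isometric (sublinear, $1$-Lipschitz) embedding $f\mapsto\bigl(2^{n\theta}K(2^{-n},f;X_0,X_1)\bigr)_n$ into $\ell_p$, the same uniform tail estimate on the compact image of $K$, and the same choice of $k_0$ via $\|x_k\|_{X_0}\to 0$, so that $x$ and $x_{k_0}$ become nearly disjointly supported head/tail blocks in $\ell_p$. The only difference is cosmetic and lies in the endgame: where the paper normalizes $\lambda_2=1$ and invokes the explicit elementary inequalities of Lemmas \ref{Ma1} and \ref{Ma2} (with a case split on $|\lambda|$), you normalize $(\lambda_1,\lambda_2)$ to the unit sphere of $\ell_p^2$ and absorb the $\delta$-error terms by uniform continuity on that compact set, which is an equally valid bookkeeping device.
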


In the proof of this proposition we will make use of the following
technical lemmas proved in \cite{Mast}.

\begin{lemma}\label{Ma1}
Let $a\ge 0,$ $\varepsilon>0$ and $1\le p<\infty.$ Then the
following inequality holds:
$$
(1+a\varepsilon)^p+(a+\varepsilon)^p\le (1+\varepsilon)^p(1+a^p).$$
\end{lemma}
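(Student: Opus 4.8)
The plan is to recognize the left-hand side as the $p$-th power of an $l_p$-norm of a sum of two vectors in $\mathbb{R}^2$ and then invoke Minkowski's inequality. Concretely, I would first record the elementary algebraic identity
$$
(1+a\varepsilon,\, a+\varepsilon) = (1,a) + \varepsilon\,(a,1),
$$
so that, since $a\ge 0$ and $\varepsilon>0$ make every entry nonnegative,
$$
(1+a\varepsilon)^p+(a+\varepsilon)^p = \big\|(1,a)+\varepsilon(a,1)\big\|_{l_p^2}^{\,p},
$$
where $\|\cdot\|_{l_p^2}$ denotes the $l_p$-norm on $\mathbb{R}^2$.

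The decisive observation is that the two vectors $(1,a)$ and $(a,1)$ are permutations of one another and hence share the same $l_p$-norm, namely $(1+a^p)^{1/p}$. Applying the triangle inequality for $\|\cdot\|_{l_p^2}$ (Minkowski's inequality, legitimate because $p\ge 1$) therefore yields
$$
\big\|(1,a)+\varepsilon(a,1)\big\|_{l_p^2} \le (1+a^p)^{1/p} + \varepsilon\,(1+a^p)^{1/p} = (1+\varepsilon)(1+a^p)^{1/p}.
$$
Raising both sides to the $p$-th power gives exactly the asserted bound $(1+a\varepsilon)^p+(a+\varepsilon)^p\le(1+\varepsilon)^p(1+a^p)$.

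There is essentially no analytic obstacle once the vector decomposition is spotted; the estimate is then immediate from Minkowski. The one hypothesis that genuinely matters is $p\ge 1$, which is precisely what guarantees that the $l_p$-functional on $\mathbb{R}^2$ is a true norm and so obeys the triangle inequality. I would also note in passing that equality holds at $a=1$, reflecting that the two summands become proportional there, in agreement with the equality case of Minkowski's inequality. A brute-force alternative, treating the difference of the two sides as a function of $a$ and examining its derivative, is feasible but markedly more laborious and far less transparent, so I prefer the two-line norm argument above.
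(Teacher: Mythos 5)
Your proof is correct. The decomposition $(1+a\varepsilon,\,a+\varepsilon)=(1,a)+\varepsilon(a,1)$ is verified by direct computation, both summand vectors have $l_p$-norm $(1+a^p)^{1/p}$, and Minkowski's inequality in $l_p^2$ (valid exactly because $p\ge 1$) gives
$$
\left((1+a\varepsilon)^p+(a+\varepsilon)^p\right)^{1/p}\le (1+\varepsilon)(1+a^p)^{1/p},
$$
from which the claim follows by raising to the $p$-th power. Note that the paper itself offers no proof of this lemma: it is quoted as a technical fact established in the reference [Mast] (Masty{\l}o), so there is no in-paper argument to compare yours against. Your argument is therefore a welcome self-contained substitute; it is shorter than the brute-force calculus alternative you mention, it isolates the only hypothesis that matters ($p\ge 1$, i.e., that the $l_p$-functional is a norm), and it explains the equality case $a=1$ via proportionality of the two vectors, which a derivative computation would obscure.
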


\begin{lemma}\label{Ma2}
Let $0<\varepsilon<1,$ $1\le p<\infty$ and
$c:=(1-\varepsilon^p/2^p)^{1/p}.$ Then the following inequalities
hold:
$$
(a)\;\;(1-\varepsilon)^p(1+a^p)\le
(ac-\varepsilon/2)^p+(c-a\varepsilon/2)^p\;\;\mbox{for all}\;\;a\in
[\varepsilon/(2c),2c/\varepsilon]$$ and
$$
(b)\;\;(1-\varepsilon)^p(1+a^p)\le |1-a|^p\;\;\mbox{for all}\;\;a\in
[0,\varepsilon/(2c)]\cup [2c/\varepsilon,\infty).$$
\end{lemma}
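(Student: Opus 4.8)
The plan is to reduce both inequalities to the single elementary fact that $c\ge 1-\varepsilon/2$, which follows directly from the definition of $c$. Indeed, $c^p+(\varepsilon/2)^p=1$, so the claim $c\ge 1-\varepsilon/2$ is equivalent to $(1-\varepsilon/2)^p+(\varepsilon/2)^p\le 1$; writing $t=\varepsilon/2\in(0,1/2)$ this reads $t^p+(1-t)^p\le 1$, which holds for every $p\ge 1$ because $s^p\le s$ on $[0,1]$ whenever $p\ge 1$ (apply this to $s=t$ and to $s=1-t$ and add). I would record this inequality first, together with its consequences $c-\varepsilon/2\ge 1-\varepsilon$ and $c>\varepsilon/2$.

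For part (a) I would exploit the linear structure of the right-hand side. On the interval $[\varepsilon/(2c),2c/\varepsilon]$ both $ac-\varepsilon/2$ and $c-a\varepsilon/2$ are nonnegative, so the right-hand side is exactly the $p$-th power of the $\ell_p$-norm on $\mathbb{R}^2$ of the vector $(ac-\varepsilon/2,\,c-a\varepsilon/2)=c\,(a,1)-(\varepsilon/2)\,(1,a)$. Since $\|(a,1)\|_p=\|(1,a)\|_p=(1+a^p)^{1/p}$, the reverse triangle inequality in the norm $\|\cdot\|_p$ (a genuine norm, as $p\ge 1$) gives $\|c(a,1)-(\varepsilon/2)(1,a)\|_p\ge (c-\varepsilon/2)(1+a^p)^{1/p}$. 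Combining this with $c-\varepsilon/2\ge 1-\varepsilon$ and raising to the $p$-th power yields (a) at once.

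For part (b) I would first halve the work using the symmetry $a\mapsto 1/a$: a direct computation shows that the ratio $|1-a|^p/(1+a^p)$ is invariant under $a\mapsto 1/a$, and this map interchanges the two pieces $[0,\varepsilon/(2c)]$ and $[2c/\varepsilon,\infty)$ of the domain, so it suffices to treat $a\in[0,\varepsilon/(2c)]$. There $a<1$ (since $c>\varepsilon/2$ forces $\varepsilon/(2c)<1$), so $|1-a|=1-a$ and the claim becomes $g(a):=(1-a)^p-(1-\varepsilon)^p(1+a^p)\ge 0$. As $g'(a)=-p(1-a)^{p-1}-p(1-\varepsilon)^p a^{p-1}\le 0$ (the case $p=1$ being linear and handled by hand), $g$ is nonincreasing, so its minimum on the interval is attained at the right endpoint $a=\varepsilon/(2c)$. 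Substituting this value and simplifying by means of $c^p=1-(\varepsilon/2)^p$ collapses the required inequality, after cancelling the common factor $1-(\varepsilon/2)^p$, to $(1-\varepsilon)^p\le(c-\varepsilon/2)^p$, i.e. once more to $c\ge 1-\varepsilon/2$.

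The routine parts are the two elementary calculus and algebra steps; the only point demanding a little care — and what I regard as the crux — is recognizing that both halves of the lemma are governed by the single normalization inequality $c\ge 1-\varepsilon/2$, after which (a) is pure reverse triangle inequality and (b) is monotonicity together with the $a\mapsto 1/a$ symmetry.
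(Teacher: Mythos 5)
Your proof is correct, but there is nothing in the paper to compare it against: the paper does not prove this lemma at all, stating only that it is one of the ``technical lemmas proved in \cite{Mast}''. So what your argument buys is a short, self-contained verification that the paper (and the reader) would otherwise have to chase down in Masty{\l}o's article. All three of your key steps check out. First, $c\ge 1-\varepsilon/2$ is indeed equivalent, via $c^p+(\varepsilon/2)^p=1$, to $t^p+(1-t)^p\le 1$ with $t=\varepsilon/2$, which follows from $s^p\le s$ on $[0,1]$ for $p\ge 1$; this also gives $c-\varepsilon/2\ge 1-\varepsilon$ and $\varepsilon/(2c)<1$. Second, in (a), on $[\varepsilon/(2c),2c/\varepsilon]$ both coordinates $ac-\varepsilon/2$ and $c-a\varepsilon/2$ are nonnegative, the identity $(ac-\varepsilon/2,\,c-a\varepsilon/2)=c\,(a,1)-(\varepsilon/2)\,(1,a)$ holds, and the reverse triangle inequality for the $\ell_p$-norm on $\mathbb{R}^2$ yields a lower bound $(c-\varepsilon/2)(1+a^p)^{1/p}\ge(1-\varepsilon)(1+a^p)^{1/p}\ge 0$, so raising to the $p$-th power is legitimate. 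Third, in (b), the ratio $|1-a|^p/(1+a^p)$ is invariant under $a\mapsto 1/a$, which maps $[2c/\varepsilon,\infty)$ onto $(0,\varepsilon/(2c)]$, so the reduction to $[0,\varepsilon/(2c)]$ is valid (the endpoint $a=0$ being covered directly); there $g(a)=(1-a)^p-(1-\varepsilon)^p(1+a^p)$ is nonincreasing, and at $a=\varepsilon/(2c)$ the identity $c^p+(\varepsilon/2)^p=1$ collapses the claim to $(c-\varepsilon/2)^p\ge(1-\varepsilon)^p$, i.e.\ once more to $c\ge 1-\varepsilon/2$. The organization around that single normalization inequality is clean and, as far as this paper is concerned, fills a genuine gap rather than duplicating an existing argument.
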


\begin{proof}[Proof of Proposition \ref{theor 1}]
Denote by $S$ the sublinear operator defined on $X_0$ as follows: 
$$
Sx=\{Sx\}_n,\;\;\;\{Sx\}_n:=2^{n\theta} K(2^{-n}, x; X_0,X_1),\;\;n=1,2,\dots$$
Taking into account inequalities \eqref{eq99} and equality \eqref{eq1000}, we see that
$S$ is an isometric embedding from $X_{\theta,p}$ into $l_p.$ Therefore, by assumption,
$S(K)$ is a compact set in $l_p$, and a given $\varepsilon\in (0,1)$
there exists $m\in\mathbb{N}$ such that
\begin{equation}\label{eq4a}
\sum_{n=m+1}^\infty \{Sx\}_{n}^p<\left(\frac{\varepsilon}{2}\right)^p\;\;\mbox{for
each}\;\;x\in K. 
\end{equation}
Hence, taking into account that $K$ is a subset of
the unit sphere in $X_{\theta,p},$ we obtain
\begin{equation}\label{eq5}
\sum_{n=1}^m \{Sx\}_{n}^p\ge 1-\left(\frac{\varepsilon}{2}\right)^p\;\;\mbox{for
each}\;\;x\in K.
\end{equation}

Further, since the functional $\{Sx\}_n$, for each $n\in\mathbb{N}$, is equivalent to
the norm in $X_0$ and $\lim_{k\to\infty}\|x_k\|_{X_0}=0,$
for every $n\in\mathbb{N}$ we have $\{Sx_k\}_n\to 0$ as $k\to\infty$. 
Thus, we can find $k_0\in\mathbb{N}$ such that the element $y:=x_{k_0}$ satisfies
the condition: 
\begin{equation}\label{eq6}
\sum_{n=1}^m \{Sy\}_{n}^p<\left(\frac{\varepsilon}{2}\right)^p.
\end{equation}
Combining this together with the fact that $\|y\|_{\theta,p}=1$, we also have
\begin{equation}\label{eq7}
\sum_{n=m+1}^\infty \{Sy\}_{n}^p\ge 1-\left(\frac{\varepsilon}{2}\right)^p.
\end{equation}

Now, we prove the right-hand side inequality in \eqref{eq3a}.
Clearly, it suffices to consider the case when $\lambda_1:=\lambda$
is arbitrary and $\lambda_2=1.$ Firstly, from \eqref{eq4a} and \eqref{eq6} it follows that
$$
\Big(\sum_{n=m+1}^\infty \{S(\lambda x+y)\}_n^{p}\Big)^{1/p}\le
|\lambda|\Big(\sum_{n=m+1}^\infty \{Sx\}_n^{p}\Big)^{1/p}+\|y\|_{\theta,p}\le |\lambda|\varepsilon +1$$
and
$$
\Big(\sum_{n=0}^m \{S(\lambda x+y)\}_n^{p}\Big)^{1/p}\le
|\lambda|\|x\|_{\theta,p}+\Big(\sum_{n=0}^m \{Sy\}_n^{p}\Big)^{1/p}\le |\lambda|+\varepsilon.$$
Then,
\begin{eqnarray*}
 \|\lambda x+y\|_{\theta,p}&=&\left(\sum_{n=0}^m \{S(\lambda x+y)\}_n^{p}
+\sum_{n=m+1}^\infty \{S(\lambda x+y)\}_n^{p}\right)^{1/p}\\
&\le& \left((|\lambda|+\varepsilon)^p+(1+|\lambda|\varepsilon)^p\right)^{1/p},
\end{eqnarray*}
whence, by Lemma \ref{Ma1}, we obtain 
$$
\|\lambda x+y\|_{\theta,p}\le
(1+\varepsilon)(1+|\lambda|^p)^{1/p},$$ and the right-hand side
inequality in \eqref{eq3a} is proved.

To prove the left-hand side inequality in \eqref{eq3a} assume, firstly,
that $|\lambda|\in [\varepsilon/(2c),2c/\varepsilon],$ where
$c:=(1-\varepsilon^p/2^p)^{1/p}.$ By \eqref{eq5} and \eqref{eq6},
$$
\Big(\sum_{n=0}^m \{S(\lambda x+y)\}_n^{p}\Big)^{1/p}\ge
|\lambda|\Big(\sum_{n=0}^m \{Sx\}_n^{p}\Big)^{1/p}-\Big(\sum_{n=0}^m \{Sy\}_n^{p}\Big)^{1/p}\ge
|\lambda|c-\frac{\varepsilon}{2},$$ and, by \eqref{eq4a} and
\eqref{eq7},
$$
\Big(\sum_{n=m+1}^\infty \{S(\lambda x+y)\}_n^{p}\Big)^{1/p}\ge
\Big(\sum_{n=m+1}^\infty \{Sy\}_n^{p}\Big)^{1/p}-|\lambda|\Big(\sum_{n=m+1}^\infty \{Sx\}_n^{p}\Big)^{1/p}\ge
c-|\lambda|\frac{\varepsilon}{2}.$$ These inequalities together with
Lemma \ref{Ma2}(a) yield
\begin{eqnarray*}
\|\lambda x+y\|_{\theta,p}&=&\left(\sum_{n=0}^m \{S(\lambda x+y)\}_n^{p}
+\sum_{n=m+1}^\infty \{S(\lambda x+y)\}_n^{p}\right)^{1/p}\\
&\ge&
\left(\left(|\lambda|c-\frac{\varepsilon}{2}\right)^p+\left(c-|\lambda|\frac{\varepsilon}{2}\right)^p
\right)^{1/p}\ge (1-\varepsilon)(1+|\lambda|^p)^{1/p}.
\end{eqnarray*}
If
$|\lambda|\not\in [\varepsilon/(2c),2c/\varepsilon],$ then Lemma
\ref{Ma2}(b) implies that
$$
(1-\varepsilon)(1+|\lambda|^p)^{1/p}\le |1-|\lambda|\,|\le \|\lambda
x+y\|_{\theta,p},$$ and the proof of the left-hand side inequality in
\eqref{eq3a} is complete.
\end{proof}

\begin{proof}[Proof of Theorem \ref{theor 2}]
For a given $\varepsilon>0,$ let $\theta_j\in (0,1)$ be such that
\begin{equation}\label{eq11}
1-\varepsilon<\prod_{j=1}^\infty(1-\theta_j)<
\prod_{j=1}^\infty(1+\theta_j)<1+\varepsilon.
\end{equation}
We construct inductively a subsequence $\{z_j\}\subset \{x_n\}$
such that, for every $l\in\fg$ and all $\lambda_1,\dots,\lambda_{l}\in\hj,$ we have
\begin{equation}\label{eq12}
\prod_{j=1}^{l}(1-\theta_j)\left(\sum_{j=1}^{l}|\lambda_j|^p\right)^{1/p}\le
\Big\|\sum_{j=1}^{l}\lambda_jz_j\Big\|_{\theta,p}\le
\prod_{j=1}^{l}(1+\theta_j)\left(\sum_{j=1}^{l}|\lambda_j|^p\right)^{1/p}.
\end{equation}

Let $z_1:= x_{n_1}$ be arbitrary. Then, applying Proposition \ref{theor 1}
to the one-element set $\{z_1\}$, we can find $z_2 := x_{n_2},$ $n_2>n_1,$ such that 
$$
\prod_{j=1}^{2}(1-\theta_j)(|\lambda_1|^p+|\lambda_2|^p)^{1/p}\le
\|\lambda_1z_1+\lambda_2z_2\|_{\theta,p}\le \prod_{j=1}^{2}(1+\theta_j)(|\lambda_1|^p+|\lambda_2|^p)^{1/p},
$$ for every $\lambda_1,\lambda_2\in\hj.$

Assume that we already have chosen the elements $z_1,\dots,z_{l-1}$ from the sequence $\{x_n\},$
which satisfy the inequality 
\begin{equation}\label{eq12.1}
\prod_{j=1}^{l-1}(1-\theta_j)\left(\sum_{j=1}^{l-1}|\lambda_j|^p\right)^{1/p}\le
\Big\|\sum_{j=1}^{l-1}\lambda_jz_j\Big\|_{\theta,p}\le
\prod_{j=1}^{l-1}(1+\theta_j)\left(\sum_{j=1}^{l-1}|\lambda_j|^p\right)^{1/p},
\end{equation}
for all $\lambda_1,\dots,\lambda_{l-1}\in\hj.$ It is clear that the set
$K:=\{x\in{\rm span}\,\{z_1,\dots,z_{l-1}\}:\,\|x\|_{\theta,p}=1\}$
is compact in $X_{\theta,p}.$ Therefore, by Proposition \ref{theor 1}, there
exists a function $z_l:= x_{n_l},$ $n_l>n_{l-1},$ such that 
\begin{equation}\label{eq13}
(1-\theta_l)(1+|\gamma|^p)^{1/p}\le \|x+\gamma
z_l\|_{\theta,p}\le(1+\theta_l)(1+|\gamma|^p)^{1/p},
\end{equation}
for all $x\in K$ and $\gamma\in\hj.$ Let
$\lambda_1,\dots,\lambda_{l}\in\hj$ be arbitrary. Clearly, it may be
assumed that $\|\sum_{j=1}^{l-1}\lambda_jz_j\|_{\theta,p}>0.$
Setting
$$
\gamma:=\lambda_l\cdot\|\sum_{j=1}^{l-1}\lambda_jz_j\|_{\theta,p}^{-1},\;\;
x:=(\sum_{j=1}^{l-1}\lambda_jz_j)\cdot\|\sum_{j=1}^{l-1}\lambda_jz_j\|_{\theta,p}^{-1}$$
(note that $x\in K)$ and using \eqref{eq13}, we infer that
$$
(1-\theta_l)\left(\Big\|\sum_{j=1}^{l-1}\lambda_jz_j\Big\|_{\theta,p}^{p}+|\lambda_l|^p\right)^{1/p}\le
\Big\|\sum_{j=1}^{l}\lambda_jz_j\Big\|_{\theta,p}\le(1+\theta_l)\left(\Big\|\sum_{j=1}^{l-1}\lambda_jz_j\Big\|_{\theta,p}^{p}
+|\lambda_l|^p\right)^{1/p}.$$ 

Combining the latter inequality with \eqref{eq12.1} we obtain
\eqref{eq12}. Since inequality \eqref{eq10.5a} is an immediate consequence 
\eqref{eq12} and \eqref{eq11}, the proof is complete.
\end{proof}

\begin{rem}\label{rem: auxiliary}
Repeating the arguments from the proof of Theorem 1
in \cite{A1}, it can be shown also that a sequence $\{z_j\}\subset \{x_n\}$
satisfying \eqref{eq10.5a}, in addition, spans in $X_{\theta,p}$
a $(1+\varepsilon)$--complemented subspace (that is, there is a projection
$P$ bounded in $X_{\theta,p}$, $\|P\|\le 1+\varepsilon$, such that $P(X_{\theta,p})=[z_j]$).
\end{rem}

Now, let us consider a somewhat different situation, where we return to the special case
of couples $(M(\tilde{\varphi}),\Lambda(\varphi))$. Let ${\mathcal L}_{\theta,p}:= 
(M(\tilde{\varphi}),\Lambda(\varphi))_{\theta,p}$, $0<\theta<1,$ $1\le p<\infty.$

\begin{prop} 
\label{lemma: auxiliary} 
Let $1<p<\infty$, and let an increasing concave function $\varphi$
satisfy condition \eqref{eq1001}. Suppose that 
$x_n\in \Lambda(\varphi),$ $n=1,2,\dots,$ are pairwise disjoint functions with
$\|x_n\|_{\Lambda(\varphi)}\asymp \|x_n\|_{M(\tilde{\varphi})}\asymp 1$ $(n\in\fg).$
Then, there exists a subsequence of $\{x_n\}$ equivalent to the unit vector basis
in $l_p,$
which spans a subspace complemented in ${\mathcal L}_{\theta,p}$.
\end{prop}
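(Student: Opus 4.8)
Proposition \ref{lemma: auxiliary} asks us to find a subsequence of pairwise disjoint functions $\{x_n\}$ (living in both $\Lambda(\varphi)$ and $M(\tilde\varphi)$ with comparable norms) that is $l_p$-equivalent in the real interpolation space ${\mathcal L}_{\theta,p}=(M(\tilde\varphi),\Lambda(\varphi))_{\theta,p}$ and that spans a complemented subspace there. Let me plan the proof.

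Let me think about the structure here.

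The plan is to apply Theorem \ref{theor 2} to the couple $(X_0,X_1)=(M(\tilde\varphi),\Lambda(\varphi))$. This requires two things: that $X_1\subset X_0$ with the norm inequality $\|x\|_{X_0}\le\|x\|_{X_1}$, and that $\lim_{n\to\infty}\|x_n\|_{X_0}\to 0$. The first holds by \eqref{eq99}, which gives $\Lambda(\varphi)\subset M(\tilde\varphi)$ with $\|x\|_{M(\tilde\varphi)}\le\|x\|_{\Lambda(\varphi)}$. So I take $X_0=M(\tilde\varphi)$ as the "large" space and $X_1=\Lambda(\varphi)$ as the "small" one. The hypotheses guarantee each $x_n$ lies in $\Lambda(\varphi)=X_1$ with norm $\asymp 1$, hence in $X_{\theta,p}={\mathcal L}_{\theta,p}$, and we may normalize so that $\|x_n\|_{\theta,p}=1$.

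The one genuine obstacle is verifying the decay hypothesis $\lim_{n\to\infty}\|x_n\|_{M(\tilde\varphi)}=0$ of Theorem \ref{theor 2} — but this is exactly where the hypothesis $\|x_n\|_{M(\tilde\varphi)}\asymp 1$ appears to contradict what we need. Here is the resolution I would pursue: because the $x_n$ are pairwise disjoint and each is supported on a set of small measure (their supports are disjoint subsets of $[0,1]$, so $m(\mathrm{supp}\,x_n)\to 0$), the condition \eqref{eq1001}, namely $\lim_{t\to 0}\tilde\varphi(t)=\lim_{t\to 0}t/\varphi(t)=0$, forces the \emph{relevant} Marcinkiewicz-type quantity to be controlled. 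I would pass first to a subsequence (by a standard gliding-hump/disjointification argument using separability, as in the proof of Proposition \ref{prop: main}) so that each $x_n$ may be assumed supported on a dyadic-type block $(2^{-n_{k+1}},2^{-n_k}]$ with $n_k\to\infty$; then I would estimate $\|x_n\|_{M(\tilde\varphi)}$ directly from the Marcinkiewicz functional $\sup_t \varphi(t)t^{-1}\int_0^t x_n^*(s)\,ds$, showing that concentration on sets of vanishing measure together with \eqref{eq1001} drives this toward $0$ along a subsequence. This decay step is the heart of the matter and the place where condition \eqref{eq1001} is indispensable.

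Once the decay $\|x_n\|_{M(\tilde\varphi)}\to 0$ is established for a suitable subsequence, I invoke Theorem \ref{theor 2} with any fixed $\varepsilon>0$ to extract a further subsequence $\{z_j\}$ satisfying \eqref{eq10.5a}, which is precisely the two-sided $(1\pm\varepsilon)$-estimate expressing that $\{z_j\}$ is $l_p$-equivalent to the unit vector basis. For the complementation assertion I appeal directly to Remark \ref{rem: auxiliary}, which states that the subsequence produced in Theorem \ref{theor 2} additionally spans a $(1+\varepsilon)$-complemented subspace of $X_{\theta,p}={\mathcal L}_{\theta,p}$. Combining these two facts yields exactly the conclusion of the proposition. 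Thus the only nontrivial work is the passage $\|x_n\|_{M(\tilde\varphi)}\to 0$ along a subsequence; everything else is an application of the machinery already developed in Section 5.
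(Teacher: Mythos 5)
Your reduction to Theorem \ref{theor 2} cannot work, because the step you yourself call ``the heart of the matter'' is impossible: you cannot extract a subsequence with $\|x_n\|_{M(\tilde{\varphi})}\to 0$, since the hypothesis of the proposition asserts $\|x_n\|_{M(\tilde{\varphi})}\asymp 1$, i.e., these norms are bounded \emph{below} by a positive constant along the whole sequence. Shrinking supports do not help, condition \eqref{eq1001} notwithstanding: a function concentrated on a set of tiny measure can perfectly well have Marcinkiewicz norm of order one. Indeed, for $x_n=\chi_{e_n}/\varphi(m(e_n))$ with pairwise disjoint sets $e_n$ one computes $\|x_n\|_{\Lambda(\varphi)}=\|x_n\|_{M(\tilde{\varphi})}=1$ exactly, no matter how fast $m(e_n)\to 0$; such sequences satisfy all hypotheses of the proposition, and no subsequence has decaying $M(\tilde{\varphi})$-norm. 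The proposition is designed precisely for the case complementary to Theorem \ref{theor 2} --- that is exactly how the two results are combined in the corollary following it, whose hypothesis reads ``either $\lim_{n\to\infty}\|x_n\|_{M(\tilde{\varphi})}=0$ or $\|x_n\|_{\Lambda(\varphi)}\asymp\|x_n\|_{M(\tilde{\varphi})}\asymp 1$'' --- so any argument funneling these $x_n$ into Theorem \ref{theor 2} is structurally doomed, not merely incomplete.

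The paper's actual proof takes a genuinely different route, none of which appears in your outline. Using the duality $M(\psi)'=\Lambda(\psi)$, one chooses biorthogonal functions $x_n'\in\Lambda(\tilde{\varphi})$ with ${\rm supp}\,x_n'\subset{\rm supp}\,x_n$, $\int_0^1 x_nx_n'\,dt=1$, and $\|x_n'\|_{\Lambda(\tilde{\varphi})}\asymp\|x_n'\|_{M(\varphi)}\asymp 1$; after passing to a subsequence so that \eqref{eq18} holds (together with the analogous $l_\infty$-estimate for $\{x_k'\}$ in $M(\varphi)$), the projection $Px=\sum_{k=1}^\infty \int_0^1 x\,x_k'\,ds\cdot x_k$ is shown to be bounded simultaneously on $\Lambda(\varphi)$ and on $M(\tilde{\varphi})$. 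This exhibits $\bigl(P(M(\tilde{\varphi})),P(\Lambda(\varphi))\bigr)$ as a complemented subcouple isomorphic to $(l_\infty,l_1)$, and the Baouendi--Goulaouic theorem then identifies the closed span $[x_k]$ computed in ${\mathcal L}_{\theta,p}$ with $(l_\infty,l_1)_{\theta,p}$, which is $l_p$ for $\theta=1/p$; finally, interpolation makes $P$ bounded on ${\mathcal L}_{\theta,p}$, giving the complementation. To repair your write-up, this duality/projection/subcouple argument must replace the decay step entirely.
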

\begin{proof}
As above, passing to a subsequence, we can assume that
\begin{equation}\label{eq18}
 \Big\|\sum_{k=1}^\infty a_kx_k\Big\|_{\Lambda(\varphi)}\asymp \|(a_k)\|_{l_1}\;\;\mbox{and}\;\;
 \Big\|\sum_{k=1}^\infty a_kx_k\Big\|_{M(\tilde{\varphi})}\asymp \|(a_k)\|_{l_\infty}.
\end{equation}
Since $\|x_n\|_{M(\tilde{\varphi})}\asymp 1$ $(n\in\fg)$ and
$M(\psi)'=\Lambda(\psi)$ for every increasing concave function $\psi$ \cite[Theorem~4.5.4]{KPS},
we can find $x_n'\in\Lambda(\tilde{\varphi})$ such that $\|x_n'\|_{\Lambda(\tilde{\varphi})}\asymp 1,$
${\rm supp}\,x_n'\subset {\rm supp}\,x_n,$ and
\begin{equation}\label{eq16}
 \int_0^1 x_n x_n'\,dt=1,\;\;n\in\fg.
\end{equation}
We claim that
\begin{equation}\label{eq1500}
\|x_n'\|_{M({\varphi})}\asymp 1,\;\;n\in\fg.
\end{equation}
In fact, on the one hand, it is clear that $\|x_n'\|_{M({\varphi})}\le \|x_n'\|_{\Lambda(\tilde{\varphi})}\le C,$
$n=1,2,\dots$ for some constant $C>0.$ Further, since $\|x_n\|_{\Lambda(\varphi)}\asymp 1$,  by \eqref{eq16},
we have $\inf_{n\in\fg}\|x_n'\|_{M({\varphi})}>0.$

Now, let us check that the projection
$$
Px(t)=\sum_{k=1}^\infty \int_0^1 x x_n'\,ds\cdot x_n(t),\;\;0\le t\le 1,$$
is bounded both in $\Lambda(\varphi)$ and $M(\tilde{\varphi})$.

First, for arbitrary set $e\subset [0,1]$, by \eqref{eq18}, we have
$$
\|P\chi_e\|_{\Lambda(\varphi)}\asymp \sum_{k=1}^\infty \Big|\int_0^1 \chi_e x_k'\,ds\Big|
=\sum_{k=1}^\infty \alpha_k\int_e x_k'\,ds,$$
where $\alpha_k:={\rm sign}\,\int_e x_k'\,ds$, $k=1,2,\dots$
In view of \eqref{eq1500}, as above, it can be assumed that
$$
\Big\|\sum_{k=1}^\infty b_kx_k'\Big\|_{M({\varphi})}\asymp \|(b_k)\|_{l_\infty}.
$$
Hence, since the functions $x_k'$, $k=1,2,\dots,$ are pairwise disjoint, by definition of
the norm in a Marcinkiewicz space, we obtain
\begin{eqnarray*}
 \|P\chi_e\|_{\Lambda(\varphi)} &\le& C\int_e\Big|\sum_{k=1}^\infty \alpha_k x_k'\Big|\,ds
=\frac{C}{\varphi(m(e))}\int_e\Big|\sum_{k=1}^\infty \alpha_k x_k'\Big|\,ds\cdot{\varphi(m(e))}\\
&\le& C\Big\|\sum_{k=1}^\infty \alpha_k x_k'\Big\|_{M({\varphi})}\|\chi_e\|_{\Lambda(\varphi)}\le 
C\|\chi_e\|_{\Lambda(\varphi)},
\end{eqnarray*}
and the boundedness of the projection $P$ in $\Lambda(\varphi)$ is proved. Furthermore, applying
\eqref{eq18} and the fact that $\Lambda({\psi})^*=M({\psi})$  
\cite[Theorem~4.5.2]{KPS}, we have 
$$
\|Px\|_{M(\tilde{\varphi})}\asymp \sup_{k=1,2,\dots}\Big|\int_0^1 xx_k'\,ds\Big|\le 
\sup_{k=1,2,\dots}\|x_k'\|_{\Lambda(\tilde{\varphi})}\|x\|_{M(\tilde{\varphi})}\le C\|x\|_{M(\tilde{\varphi})},$$
which implies the boundedness of $P$ in the space $M(\tilde{\varphi})$.

Thus, $(P(M(\tilde{\varphi})),P(\Lambda(\varphi)))$ is a
complemented subcouple of the Banach couple $(M(\tilde{\varphi}),\Lambda(\varphi))$.
At the same time, by \eqref{eq18}, the couple $(P(M(\tilde{\varphi})),P(\Lambda(\varphi)))$ is isomorphic to the couple
$(l_\infty,l_1)$. Therefore, by the well-known result of Baouendi and Goulaouic
\cite[Theorem 1]{BG} (see also \cite[Theorem 1.17.1]{Trib}), the sequence
$\{x_k\}$ is equivalent in ${\mathcal L}_{\theta,p}$ to the unit vector basis in the space
$(l_\infty,l_1)_{\theta,p}.$ In particular, if $\theta=1/p$, we have $(l_\infty,l_1)_{\theta,p}=l_p$ (see e.g. 
\cite[Theorem~5.2.1]{BL}); hence, $\{x_k\}$ is equivalent in ${\mathcal L}_{\theta,p}$ to the unit vector basis in $l_p.$
Finally, since the projection $P$ is bounded in ${\mathcal L}_{\theta,p}$, the subspace 
$[x_k]$ is complemented in the latter space.
\end{proof}

From Theorem \ref{theor 2}, Remark \ref{rem: auxiliary} and Proposition \ref{lemma: auxiliary}
we obtain

\begin{cor}
Let $1<p<\infty$ and let an increasing concave function $\varphi$
satisfy condition \eqref{eq1001}. Suppose that 
$x_n\in X_{1/p,p},$ $n=1,2,\dots,$ are pairwise disjoint functions such that either
$\lim_{n\to\infty}\|x_n\|_{M(\tilde{\varphi})}=0$ or
$\|x_n\|_{\Lambda(\varphi)}\asymp \|x_n\|_{M(\tilde{\varphi})}\asymp 1$ $(n\in\fg).$
Then, there exists a subsequence of $\{x_n\},$ which is equivalent to the unit vector basis
in $l_p$
and spans a subspace complemented in ${\mathcal L}_{\theta,p}$. 
\end{cor}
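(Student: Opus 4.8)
The plan is to split the argument according to the two alternatives in the hypothesis and to read off the conclusion from the results already established in the present section, no new estimate being required. Throughout I set $X_0=M(\tilde\varphi)$ and $X_1=\Lambda(\varphi)$; by \eqref{eq99} this is an embedded couple, $X_1\subset X_0$ with $\|\cdot\|_{X_0}\le\|\cdot\|_{X_1}$, so the framework of this section applies, and with the choice $\theta=1/p$ one has $X_{\theta,p}={\mathcal L}_{1/p,p}$. Since pairwise disjoint systems are taken normalized in ${\mathcal L}_{1/p,p}$ (the standing convention throughout the paper), I may assume $\|x_n\|_{{\mathcal L}_{1/p,p}}=1$ for all $n$.

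First I would dispose of the alternative $\lim_{n\to\infty}\|x_n\|_{M(\tilde\varphi)}=0$, i.e. $\lim_n\|x_n\|_{X_0}=0$. Together with $\|x_n\|_{1/p,p}=1$ this is, verbatim, the hypothesis of Theorem~\ref{theor 2} applied to the couple $(X_0,X_1)$ with $\theta=1/p$. Fixing an arbitrary $\varepsilon\in(0,1)$ and invoking that theorem produces a subsequence $\{z_j\}\subset\{x_n\}$ obeying the two-sided bound \eqref{eq10.5a}, which is precisely the assertion that $\{z_j\}$ is equivalent to the unit vector basis of $l_p$. Remark~\ref{rem: auxiliary} then furnishes, for the same subsequence, a projection of norm at most $1+\varepsilon$ onto $[z_j]$, so that $[z_j]$ is complemented in ${\mathcal L}_{1/p,p}$; this settles the first case.

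Next I would treat the alternative $\|x_n\|_{\Lambda(\varphi)}\asymp\|x_n\|_{M(\tilde\varphi)}\asymp 1$. Here the $x_n$ lie in $\Lambda(\varphi)$ and satisfy literally the hypotheses of Proposition~\ref{lemma: auxiliary}, so that proposition, read with $\theta=1/p$ (whence $(l_\infty,l_1)_{1/p,p}=l_p$), yields in a single step a subsequence equivalent to the unit vector basis of $l_p$ which spans a subspace complemented in ${\mathcal L}_{1/p,p}$. Since the two alternatives exhaust the hypothesis, the corollary follows.

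I do not expect a genuine obstacle: the content is purely a matching of hypotheses, and the only point worth care is that the two inputs are normalized differently—Theorem~\ref{theor 2} by $\|x_n\|_{1/p,p}=1$, Proposition~\ref{lemma: auxiliary} by the two-sided comparability of the endpoint norms $\|\cdot\|_{\Lambda(\varphi)}$ and $\|\cdot\|_{M(\tilde\varphi)}$. The preliminary reduction above reconciles these conventions (in the second case the comparability of the endpoint norms forces $\|x_n\|_{1/p,p}\asymp 1$ via \eqref{eq99}), after which the corollary is a direct assembly of Theorem~\ref{theor 2}, Remark~\ref{rem: auxiliary} and Proposition~\ref{lemma: auxiliary}.
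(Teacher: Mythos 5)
Your proof is correct and is essentially identical to the paper's own argument: the paper derives this corollary in one line as a direct combination of Theorem \ref{theor 2} together with Remark \ref{rem: auxiliary} (for the case $\lim_{n\to\infty}\|x_n\|_{M(\tilde{\varphi})}=0$) and Proposition \ref{lemma: auxiliary} with $\theta=1/p$ (for the case $\|x_n\|_{\Lambda(\varphi)}\asymp\|x_n\|_{M(\tilde{\varphi})}\asymp 1$), which is exactly your case split. Your preliminary remarks on reconciling the normalizations are a harmless (indeed slightly more careful) elaboration of what the paper leaves implicit.
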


Regarding to the latter statement the following natural question arises:
Let $1<p<\infty$, and let an increasing concave function $\varphi$
satisfy condition \eqref{eq1001}. Suppose that 
$x_n\in \Lambda(\varphi),$ $n=1,2,\dots,$ are pairwise disjoint functions such that 
$\|x_n\|_{\Lambda(\varphi)}\to\infty$ as $n\to\infty$ and 
$\|x_n\|_{M(\tilde{\varphi})}\asymp 1$ $(n\in\fg).$ Is there a subsequence of $\{x_n\}$ 
equivalent to the unit vector basis in $l_p$ ?
The positive answer to this question would mean that
each of the spaces $X_{1/p,p}=(M(\tilde{\varphi}),\Lambda(\varphi))_{1/p,p},$ under condition \eqref{eq1001},
is $p$-disjointly homogeneous.

Though we are not able to resolve the above question we can present
some weaker result in the positive direction.

\begin{prop} 
\label{lemma: disjoint functions} 
Let an increasing concave function $\varphi$
satisfy condition \eqref{eq1001}. Suppose that 
$x_n\in {\mathcal L}_{\theta,p}=(M(\tilde{\varphi}),\Lambda(\varphi))_{\theta,p},$ $n=1,2,\dots,$ 
where $0<\theta<1$ and $1\le p<\infty,$ are arbitrary pairwise disjoint functions.

Then, the closed linear span $[x_n]$ in ${\mathcal L}_{\theta,p}$ is not closed in the Marcinkiewicz space
$M(\tilde{\varphi}).$
\end{prop}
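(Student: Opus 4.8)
The plan is to reduce the statement to the non-equivalence of the two norms on the linear span of $\{x_n\}$ and then to exhibit combinations witnessing the gap. By definition $[x_n]$ is complete in $\|\cdot\|_{{\mathcal L}_{\theta,p}}$, and the embedding ${\mathcal L}_{\theta,p}\subset M(\tilde{\varphi})$ is continuous (by \eqref{eq99}, since the fundamental function of ${\mathcal L}_{\theta,p}$ is equivalent to $\varphi$). Hence $[x_n]$ is closed in $M(\tilde{\varphi})$ if and only if it is also complete in $\|\cdot\|_{M(\tilde{\varphi})}$; two comparable complete norms being automatically equivalent (open mapping theorem), closedness in $M(\tilde{\varphi})$ would force $\|f\|_{{\mathcal L}_{\theta,p}}\le C\|f\|_{M(\tilde{\varphi})}$ for all $f$ in the span. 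Thus it suffices to produce $f_N\in{\rm span}\,\{x_n\}$ with $\|f_N\|_{{\mathcal L}_{\theta,p}}/\|f_N\|_{M(\tilde{\varphi})}\to\infty$. Since both norms depend only on $|\cdot|$ and the $x_n$ are disjoint, $|\sum c_kx_k|=\sum|c_k|\,|x_k|$, so I may assume $x_n\ge 0$ and, after normalising, $\|x_n\|_{{\mathcal L}_{\theta,p}}=1$, whence $\|x_n\|_{M(\tilde{\varphi})}\le 1$.

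The first, clean, half is the divergence $\|\sum_{n=1}^N x_n\|_{{\mathcal L}_{\theta,p}}\to\infty$. Under \eqref{eq1001} the space $M(\tilde{\varphi})\cap\Lambda(\varphi)=\Lambda(\varphi)$ is separable and dense in ${\mathcal L}_{\theta,p}$ (as $p<\infty$), so ${\mathcal L}_{\theta,p}$ is order continuous; it also has the Fatou property, being a $K$-space over the Fatou couple $(M(\tilde{\varphi}),\Lambda(\varphi))$. If $\sup_N\|\sum_{n=1}^N x_n\|_{{\mathcal L}_{\theta,p}}$ were finite, the Fatou property would place the pointwise sum $x=\sum_{n=1}^\infty x_n$ in ${\mathcal L}_{\theta,p}$, and order continuity applied to $0\le\sum_{n=1}^N x_n\uparrow x$ would give $\|\sum_{n>N}x_n\|_{{\mathcal L}_{\theta,p}}\to 0$, contradicting $\|x_n\|_{{\mathcal L}_{\theta,p}}=1$. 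The same reasoning shows $x_n\in M_0(\tilde{\varphi})$, the order continuous separable part of $M(\tilde{\varphi})$: each $x_n$ is an ${\mathcal L}_{\theta,p}$-limit, hence an $M(\tilde{\varphi})$-limit, of functions from $\Lambda(\varphi)\subset M_0(\tilde{\varphi})$.

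It remains to keep the Marcinkiewicz norm of a suitable combination small, and this is the main obstacle. If $\liminf_n\|x_n\|_{M(\tilde{\varphi})}=0$ we are done along a subsequence, the single functions giving the ratio $1/\|x_n\|_{M(\tilde{\varphi})}\to\infty$ (cf. Theorem~\ref{theor 2}). Otherwise $\|x_n\|_{M(\tilde{\varphi})}\asymp 1$; then $\sum_{n=1}^N x_n$ also diverges in $M(\tilde{\varphi})$ by the order continuity argument applied in $M_0(\tilde{\varphi})$, so the choice $c_n\equiv 1$ need not separate the norms and one must compare growth rates. The plan is to pass to a subsequence whose supports—of measure tending to $0$ by disjointness—live at rapidly separated scales, and to exploit the supremum-of-averages structure of $\|\cdot\|_{M(\tilde{\varphi})}$ together with $\lim_{t\to0}t/\varphi(t)=0$ to show that at every scale $t$ the quantity $\tfrac{\varphi(t)}{t}\int_0^t\bigl(\sum_{n=1}^N c_nx_n\bigr)^*$ is controlled by the $O(1)$ scales adjacent to $t$ plus a summable tail. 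This is designed to make $\|\sum_{n=1}^N c_nx_n\|_{M(\tilde{\varphi})}$ grow strictly more slowly than $\|\sum_{n=1}^N c_nx_n\|_{{\mathcal L}_{\theta,p}}$ (which already tends to $\infty$), so that the ratio diverges; in the ``balanced'' regime of Proposition~\ref{lemma: auxiliary} one in fact keeps the Marcinkiewicz norm bounded and the gap is immediate.

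The hard point, exactly as in the open problem stated just before this proposition, occurs when $\|x_n\|_{\Lambda(\varphi)}\to\infty$ (possibly even $x_n\notin\Lambda(\varphi)$): the $x_n$ are then spread over many scales, the naive single-scale estimate for the Marcinkiewicz norm breaks down, and the convenient equivalence $\|\cdot\|_{M(\tilde{\varphi})}\asymp\|\cdot\|_{\ell_\infty}$ for disjoint normalised sequences is no longer available. What rescues the weaker conclusion is that I do not need a uniform bound on $\|\sum_{n=1}^N x_n\|_{M(\tilde{\varphi})}$, but only that its growth stays strictly below that of $\|\sum_{n=1}^N x_n\|_{{\mathcal L}_{\theta,p}}$; the scale-separation (gliding-hump) construction above is tailored to produce precisely this strict gap, after which the reduction of the first paragraph completes the proof.
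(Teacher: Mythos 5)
Your first two paragraphs are sound: the open-mapping reduction (closedness of $[x_n]$ in $M(\tilde{\varphi})$ would force $\|f\|_{\theta,p}\le C\|f\|_{M(\tilde{\varphi})}$ on the span) and the divergence $\|\sum_{n=1}^N x_n\|_{\theta,p}\to\infty$, obtained from the Fatou property plus order continuity of the separable space ${\mathcal L}_{\theta,p}$, are both correct. The genuine gap is in the decisive case $\|x_n\|_{M(\tilde{\varphi})}\asymp 1$: there you never produce the required combinations, you only announce a ``scale-separation (gliding-hump) construction'' that is ``designed to'' make the Marcinkiewicz norm grow strictly more slowly, with no estimate carried out. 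Worse, this sketch rests on a false claim, namely that $\sum_{n=1}^N x_n$ ``diverges in $M(\tilde{\varphi})$ by the order continuity argument applied in $M_0(\tilde{\varphi})$.'' That argument does not transfer: $M(\tilde{\varphi})$ is not order continuous, and although each $x_n$ and each partial sum lies in $M_0(\tilde{\varphi})$, the Fatou-limit $\sum_{n=1}^\infty x_n$ need not lie in $M_0(\tilde{\varphi})$, so nothing forces the tails to become small and no contradiction with $\|x_n\|_{M(\tilde{\varphi})}\asymp 1$ arises. In fact the truth is the exact opposite, and it is precisely the fact you dismiss in your last paragraph as ``no longer available'': by \cite[Theorem~5.1]{FJT} or \cite[Proposition~1]{Tok} --- used twice in the paper, and valid under the sole hypothesis $\lim_{t\to 0}t/\varphi(t)=0$ for any pairwise disjoint sequence normalized in $M(\tilde{\varphi})$, with no assumption whatsoever on Lorentz norms --- some subsequence satisfies $\|\sum_k a_kx_{n_k}\|_{M(\tilde{\varphi})}\asymp\sup_k|a_k|$, so its partial sums are \emph{bounded} in $M(\tilde{\varphi})$.

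Once this is granted, your own scheme closes immediately and no growth-rate comparison is needed: along that subsequence the partial sums are bounded in $M(\tilde{\varphi})$ but divergent in ${\mathcal L}_{\theta,p}$ (your second paragraph applies verbatim to the subsequence), so the two norms are not equivalent on the span, and the reduction of your first paragraph finishes the proof. For comparison, after the same FJT/Tokarev step the paper takes a different and heavier route: it normalizes $\|x_n\|_{M(\tilde{\varphi})}=1$, assumes closedness, uses the Fatou property to conclude $z=\sum_k x_k\in{\mathcal L}_{\theta,p}$, and then derives a contradiction from near-optimal $K$-functional decompositions $x_i=u_i^k+v_i^k$ at the scales $2^{-k}$: the membership $z\in{\mathcal L}_{\theta,p}$ forces $\|v^k\|_{M(\tilde{\varphi})}\to 0$, hence at some scale $k_0$ every piece satisfies $\|u_i^{k_0}\|_{\Lambda(\varphi)}>1/2$, and the almost-additivity of the Lorentz norm on disjoint functions of small support (\cite[Lemma~1]{Tok}) makes $\|u^{k_0}\|_{\Lambda(\varphi)}$ infinite --- a contradiction; order continuity of ${\mathcal L}_{\theta,p}$ is never used there. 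Your order-continuity observation, combined correctly with the $\ell_\infty$-equivalence, would in fact yield a shorter proof than the paper's; but as written, the main case of your proposal is unproven and its guiding claim is false.
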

\begin{proof}
 Without loss of generality, we can assume that 
\begin{equation}
\label{eq1}
\|x_n\|_\vr=1,\;\;n=1,2,\dots
\end{equation}
Then, since $\lim_{t\to 0}{t}/{\varphi(t)}=0,$ applying once more
\cite[Theorem~5.1]{FJT} or \cite[Proposition~1]{Tok} (and passing
to a subsequence if necessary) , we obtain  
$$
\Big\|\sum_{k=1}^\infty a_kx_k\Big\|_{M(\tilde{\varphi})}\asymp \|(a_k)\|_{l_\infty}.$$
Assuming the contrary, let the subspace $[x_n]$ be closed in $M(\tilde{\varphi}).$ Then combining the latter
equivalence with the fact that the space ${\mathcal L}_{\theta,p}$ has the Fatou property, we infer that
\begin{equation}
\label{eq3}
z:=\sum_{k=1}^\infty x_k \in {\mathcal L}_{\theta,p}.
\end{equation}

In order to obtain a contradiction with \eqref{eq3}, let us estimate the
$K$-functional $K(t,z;M(\tilde{\varphi}),\Lambda(\varphi))$ at points $t=2^{-k}$, $k=1,\dots,$ from below.
By definition, for every $k=1,2,\dots$ there exist
$u_i^k\in \Lambda(\varphi)$ and $v_i^k\in M(\tilde{\varphi})$  $(i=1,2,\dots)$ such that 
$u_i^k+v_i^k=x_i$, ${\rm supp}\,u_i^k\subset {\rm supp}\,x_i$,
${\rm supp}\,v_i^k\subset {\rm supp}\,x_i$ and
$$
K(2^{-k},z;M(\tilde{\varphi}),\Lambda(\varphi))\ge \frac12 (\|v^k\|_\vr+2^{-k}\|u^k\|_{\il}),$$
where $u^k:=\sum_{i=1}^\infty u_i^k\in \Lambda(\varphi),$ $v^k:=\sum_{i=1}^\infty v_i^k\in M(\tilde{\varphi}),$ $k=1,2,\dots$
From the estimate
\begin{eqnarray*}
\|z\|_{\theta,p}^p &\ge& \sum_{k=1}^\infty\left(K(2^{-k},z;M(\tilde{\varphi}),\Lambda(\varphi)) 2^{k\theta}\right)^p\\
&\ge& 2^{-p}\sum_{k=1}^\infty\left(\|v^k\|_\vr+2^{-k}\|u^k\|_{\il}\right)^p2^{k\theta p}\\
&\ge& 2^{-p}\sum_{k=1}^\infty 2^{kp\theta}\|v^k\|_\vr^p
 \end{eqnarray*}
and \eqref{eq3} it follows that $\|v^k\|_\vr\to 0$ as $k\to\infty.$
Therefore, for some $k_0\in\fg$ we have
$$
\|v_i^{k_0}\|_\vr\le \|v^{k_0}\|_\vr<\frac{1}{2},\;\; i=1,2,\dots$$
Combining the latter inequality with the equality $u_i^k+v_i^k=x_i$ and \eqref{eq1}, we obtain
\begin{equation}
\label{eq4}
\|u_i^{k_0}\|_\il\ge \|u_i^{k_0}\|_\vr\ge 1-\|v_i^{k_0}\|_\vr>\frac{1}{2},\;\;i=1,2,\dots
\end{equation}

Let $\varepsilon_k>0$ $(i=1,2,\dots),$ $\sum_{k=1}^\infty \varepsilon_k<\infty.$ 
Since $\lim_{t\to 0}\varphi(t)=0,$ by \cite[Lemma~1]{Tok}, there is $\delta=\delta(\varepsilon_1)>0$ such that for arbitrary
$y\in\il,$ with $m({\rm supp}\, y)<\delta,$ we have
$$
\|u_1^{k_0}+y\|_\il\ge \|y\|_\il+(1-\varepsilon_1)\|u_1^{k_0}\|_\il.$$
Then, if $i_0=1$ and $i_1>i_0$ is chosen so that $m({\rm supp}\, u_{i_1}^{k_0})<\delta,$
from the preceding inequality and \eqref{eq4} it follows that
$$
\|u_{i_0}^{k_0}+ u_{i_1}^{k_0}\|_\il\ge \frac{1}{2}(1+(1-\varepsilon_1)).$$
Applying \cite[Lemma~1]{Tok} once more, but now to the function
$u_{i_0}^{k_0}+ u_{i_1}^{k_0}$ and $\varepsilon_2$ in the similar way, we can find $i_2>i_1,$
for which
\begin{eqnarray*}
 \|u_{i_0}^{k_0}+ u_{i_1}^{k_0}+u_{i_2}^{k_0}\|_\il &\ge& \|u_{i_2}^{k_0}\|_\il+
\frac{1}{2}(1-\varepsilon_2)(1+(1-\varepsilon_1))\\
&\ge&  \frac{1}{2}(1+(1-\varepsilon_1)+(1-\varepsilon_1)(1-\varepsilon_2)).
\end{eqnarray*}
Arguing in the same way, we construct an increasing sequence of indices
$(i_j)_{j=0}^\infty$ such that 
$$
\Big\|\sum_{j=0}^\infty u_{i_j}^{k_0}\Big\|_\il\ge 
\frac{1}{2}\left(1+\sum_{r=1}^n\prod_{k=1}^r(1-\varepsilon_k)\right),\;\;n\in\fg.$$
Since $\sum_{k=1}^\infty \varepsilon_k<\infty,$ we have $\prod_{k=1}^\infty(1-\varepsilon_k)=\eta>0.$
Therefore, from the preceding inequality it follows that
$$
\|u^{k_0}\|_\il\ge \Big\|\sum_{j=0}^\infty u_{i_j}^{k_0}\Big\|_\il\ge \frac{\eta (n+1)}{2},\;\;\mbox{for every}\;\;n\in\fg.$$
Clearly, this contradicts to the fact that $u^{k_0}\in\il,$ and the proof is completed.
\end{proof}

The following theorem is an immediate consequence of Proposition \ref{lemma: disjoint functions} 
and arguments used in the proof of Theorem \ref{theor 2}.

\begin{theor}\label{theor 10}
Let an increasing concave function $\varphi$ satisfy condition \eqref{eq1001}. Suppose that 
$x_n\in {\mathcal L}_{\theta,p}=(M(\tilde{\varphi}),\Lambda(\varphi))_{\theta,p},$ $n=1,2,\dots,$ 
where $0<\theta<1$ and $1\le p<\infty,$ are arbitrary pairwise disjoint functions,
$\|x_n\|_{\theta,p}=1$ $(n=1,2,\dots).$

Then, for every $\varepsilon>0$ there exists a block basis 
$u_i=\sum_{j=n_i}^{n_{i+1}-1} a_jx_j,$ $1=n_0<n_1<\dots,$ satisfying the following conditions:

(i) for arbitrary $\lambda_k\in\hj,$ $k=1,2,\dots,$ we have
\begin{equation}\label{eq10.5}
(1-\varepsilon)\left(\sum_{k=1}^{\infty}|\lambda_k|^p\right)^{1/p}\le
\Big\|\sum_{k=1}^{\infty}\lambda_ku_k\Big\|_{\theta,p}\le
(1+\varepsilon)\left(\sum_{k=1}^{\infty}|\lambda_k|^p\right)^{1/p};
\end{equation}

(ii) the closed linear span $[u_k]$ is $(1+\varepsilon)$--complemented in the space ${\mathcal L}_{\theta,p}.$
\end{theor}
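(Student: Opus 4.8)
The plan is to reduce the general case to a sequence of pairwise disjoint functions whose $M(\tilde{\varphi})$-norms tend to zero, and then to invoke Theorem \ref{theor 2} together with Remark \ref{rem: auxiliary}. Throughout I would use that $(X_0,X_1)=(M(\tilde{\varphi}),\Lambda(\varphi))$ is an embedded couple, since by \eqref{eq99} one has $\Lambda(\varphi)\subset M(\tilde{\varphi})$ with $\|x\|_{M(\tilde{\varphi})}\le\|x\|_{\Lambda(\varphi)}$; consequently ${\mathcal L}_{\theta,p}=X_{\theta,p}$ embeds continuously into $X_0=M(\tilde{\varphi})$, and I denote the norm of this embedding by $C$.

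The first and main step is to produce a block basis $u_i=\sum_{j=n_i}^{n_{i+1}-1}a_jx_j$, $1=n_0<n_1<\dots,$ with $\|u_i\|_{\theta,p}=1$ and $\|u_i\|_{M(\tilde{\varphi})}\to 0$. Fix any $m$ and $\delta>0$. The tail $\{x_j\}_{j\ge m}$ consists of pairwise disjoint functions, so by Proposition \ref{lemma: disjoint functions} its closed span $[x_j]_{j\ge m}$ in ${\mathcal L}_{\theta,p}$ is not closed in $M(\tilde{\varphi})$. Since the inclusion of the Banach space $[x_j]_{j\ge m}$ into $M(\tilde{\varphi})$ is a continuous injection, a non-closed range forces it not to be bounded below; hence $\inf\{\|w\|_{M(\tilde{\varphi})}:\,w\in[x_j]_{j\ge m},\ \|w\|_{\theta,p}=1\}=0$, and I may pick such a $w$ with $\|w\|_{M(\tilde{\varphi})}<\delta/2$. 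As $w$ lies in the closed span, I approximate it in ${\mathcal L}_{\theta,p}$ by a finite combination $v=\sum_{j=m}^{N}a_jx_j$; using $\|\cdot\|_{M(\tilde{\varphi})}\le C\|\cdot\|_{\theta,p}$ and a sufficiently good approximation, the renormalized block $u:=v/\|v\|_{\theta,p}$ satisfies $\|u\|_{\theta,p}=1$ and $\|u\|_{M(\tilde{\varphi})}<\delta$. Iterating with $\delta=\delta_i\to 0$ and consecutive index ranges (taking $m=n_i$, $N=n_{i+1}-1$ at the $i$-th step) yields consecutive blocks $u_i$ with $\|u_i\|_{M(\tilde{\varphi})}\to 0$.

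These blocks $u_i$ are pairwise disjoint (being supported on disjoint index ranges of the pairwise disjoint $x_j$), are normalized in ${\mathcal L}_{\theta,p}$, and satisfy $\lim_{i\to\infty}\|u_i\|_{X_0}=\lim_{i\to\infty}\|u_i\|_{M(\tilde{\varphi})}=0$. Thus Theorem \ref{theor 2}, applied to the sequence $\{u_i\}$, yields for the given $\varepsilon>0$ a subsequence satisfying the two-sided $\ell_p$ estimate \eqref{eq10.5a}, which is exactly (i); and Remark \ref{rem: auxiliary} shows that this subsequence spans a $(1+\varepsilon)$-complemented subspace of ${\mathcal L}_{\theta,p}$, which is (ii). Relabelling the subsequence and filling the skipped indices with zero coefficients recovers a block basis of the required form $u_k=\sum_{j=n_k}^{n_{k+1}-1}a_jx_j$.

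I expect the only real obstacle to be the first step: converting the qualitative non-closedness furnished by Proposition \ref{lemma: disjoint functions} into genuine \emph{normalized} blocks with vanishing $M(\tilde{\varphi})$-norm, while keeping them supported on consecutive, mutually disjoint index ranges. Once such blocks are available, the argument reduces entirely to the already-established Theorem \ref{theor 2} and Remark \ref{rem: auxiliary}, so the statement is indeed an immediate consequence of these two ingredients.
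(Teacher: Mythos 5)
Your proposal is correct and follows essentially the same route as the paper, which states Theorem \ref{theor 10} as an immediate consequence of Proposition \ref{lemma: disjoint functions} together with the arguments of Theorem \ref{theor 2} (and Remark \ref{rem: auxiliary}): your conversion of non-closedness of the tail spans in $M(\tilde{\varphi})$ into normalized consecutive blocks with vanishing $M(\tilde{\varphi})$-norm is precisely the bridging step the paper leaves implicit.
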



\begin{center}
S.V. Astashkin \\
Department of Mathematics and Mechanics\\
Samara State University\\
443011 Samara, Acad. Pavlov, 1 \\
Russian Federation\\
e-mail: {\it astash@samsu.ru}
\vskip 0.4cm
\end{center}


\begin{thebibliography}{}

\bibitem{AK} F. Albiac and N. J. Kalton, {\it Topics in Banach Space Theory}, Graduate Texts in Mathematics 233,
Springer, New York, 2006.

\bibitem {A3} S. V. Astashkin, {\it A description of interpolation spaces between
$(l_1(w^0),l_1(w^1))$ and $(l_{\infty}(w^0),l_{\infty}(w^1)),$}
Mat. Zametki {\bf 35}(1984), no.~4, 497--503 (in Russian); English transl.
in Math. Notes {\bf 35}(1984), no.~4, 261--265.

\bibitem {A3} S. V. Astashkin, {\it A property of functors of the real interpolation method,} Mat. Zametki
{\bf 38}(1985), no. 3, 393–-406 (in Russian); English transl. in Math. Notes
{\bf 38}(1985), no. 3, 725–-732.

\bibitem {A2} S.~V. Astashkin, {\it On cones of step functions in symmetric spaces,} Sibirsk. Mat. Zh. {\bf 34}(1993),
no. 4, 7–-16 (in Russian); English transl. in Siberian Math. J. {\bf 34}(1993),
no. 4, 597–605. {\bf 34}(1993), no. 4, 7--16.

\bibitem {A1} S.~V. Astashkin, {\it Geometrical properties of Banach spaces generated by
sublinear operators,} Positivity {\bf 17}(2013), 223--234.

\bibitem{AM08} S. V. Astashkin and L. Maligranda, {\it Ultrasymmetric Orlicz spaces,}
J. Math. Anal. Appl. {\bf 347} (2008), no. 2, 273--285.

\bibitem{BG} M. S. Baouendi and C. Goulaouic, {\it Commutation de l'intersection et des foncteurs d'interpolation}, 
C. R. Acad. Sci. Paris S\'er. A-B 26 (1967), 313--315.

\bibitem {BS} C. Bennett and R. Sharpley, {\it Interpolation of 
Operators}, Academic Press, Boston 1988.

\bibitem {BL}  J. Bergh and J. L{\"o}fstr{\"o}m, {\it Interpolation Spaces.
An Introduction}, Springer-Verlag, Berlin-New York, 1976.

\bibitem {BK} Yu.~A.~Brudnyi and N.~Ya.~Kruglyak, {\it Interpolation Functors
and Interpolation Spaces 1.} North-Holland, 1991.

\bibitem{CD} N. L. Carothers and S. J. Dilworth, {\it Subspaces of $L_{p,q}$}, Proceedings of the American
Mathematical Society {\bf 104} (1988), 537–545.

\bibitem{DKO} V.I. Dmitriev, S.G. Krein and V.I. Ovchinnikov, {\it Fundamentals of the theory of interpolation 
of linear operators}. Geometry of linear spaces and operator theory, pp. 31–74 (in Russian). 
Jaroslav. Gos. Univ., Yaroslavl', 1977. 
 
\bibitem{FJT} T. Figiel, W. B. Johnson and L. Tzafriri,
 \textit{On Banach lattices and spaces having local unconditional structure 
 with applications to Lorentz function spaces.}
 J. Approx. Theory {\bf 13}(1975), 395--412.

\bibitem {FHST} J. Flores, F.L. Hernandez, E.M. Semenov, and P. Tradacete, {\it Strictly singular
and power-compact operators on Banach lattices}, Israel J. Math. {\bf 188} (2012), 323--352.

\bibitem{FHSTT} J. Flores, F. L. Hernandez, E. Spinu, P. Tradacete, and V. G. Troitsky,
\textit{Disjointly homogeneous Banach lattices: duality and complementation}.
 J. Funct. Anal. (2014), http://dx.doi.org/j.jfa.2013.12.024.

\bibitem{FTT2} J. Flores, P. Tradacete, and V. G. Troitsky,
\textit{Disjointly homogeneous Banach latices and compact product of operators.}
 J. Math. Anal. Appl. {\bf 354} (2009), 657--663.
 
 \bibitem{HST}  F. L. Hernandez, E. M. Semenov, and P. Tradacete,
\textit{Rearrangement invariant spaces with Kato property} (Preprint).

\bibitem {Kal} N. J. Kalton, {\it Calder{\'o}n couples of rearrangement invariant spaces}, Studia Math. 
{\bf 106} (1993), no. 3, 233--277.

 \bibitem{KPS} S. G. Krein, Yu. I. Petunin, and E. M. Semenov,
\textit{Interpolation of linear operators.} Translations of Mathematical Monographs, 
54. American Mathematical Society, 1982.

\bibitem{Levy} M.~Levy, {\it L'espace d'interpolation r\'{e}el $(A_0,A_1)_{\theta,p}$
conteint $l^p$}, C. R. Acad. Sci. Paris, \textbf{289}~(1979), 675--677.
 
\bibitem{LT2} Lindenstrauss J., Tzafriri L. {\it Classical Banach Spaces II. Function spaces.} 
Berlin-Heidelberg-New York: Springer-Verlag, 1979.

\bibitem {Lo} G. G.  Lorentz, {\it On the theory of spaces 
$\Lambda$}, Pacific J. Math. {\bf 1}(1951), 411--429. 

\bibitem {Loz} G. Ja. Lozanovskii, {\it Certain Banach lattices}, Sibirsk. Mat. Zh. {\bf 10} (1969),
584–599 (in Russian).

\bibitem{Mast} M.~Masty{\l}o, \emph{Banach spaces via sublinear operators}, Math.
Japon. \textbf{36} (1991), no. 1, 785--792.

\bibitem {Pust} E. Pustylnik, {\it Ultrasymmetric spaces}, J. London Math. 
Soc. (2) {\bf 68}(2003), no. 1, 165--182. 

\bibitem{Pust1} E. Pustylnik, {\it Sobolev type inequalities in ultrasymmetric 
spaces with applications to Orlicz-Sobolev embeddings}, J. Funct. Spaces Appl. 
{\bf 3}(2005), no. 2, 183--208.

\bibitem{Pust2} E. Pustylnik, {\it Ultrasymmetric sequence spaces in 
approximation theory}, Collect. Math. {\bf 57}(2006), no. 3, 257--277.

\bibitem{Ra} Y. Raynaud, {\it On Lorentz-Sharpley spaces}, Israel Math. Conf. 
Proc., Vol. 5, Bar-Ilan Univ., Ramat Gan 1992, 207--228.

\bibitem{Sh} R. Sharpley, {\it Spaces $\Lambda_{\alpha}(X)$ and 
interpolation}, J. Funct. Anal. {\bf 11}(1972), 479--513.

\bibitem {Shes} V. A: Shestakov, {\it Transformations of Banach ideal spaces and interpolation
of linear operators}, Bull. Acad. Polon. Sci. S\'{e}r. Sci. Math. {\bf 29} (1981),
 no. 11-12, 569–-577 (1982) (in Russian).

\bibitem {Tok} Tokarev E.~V. {\it On subspaces of some symmetric spaces,}
Teoriya funkcii, funkc. anal. i ih prilozh., {\bf 24}(1975), 156--161 (in Russian).

\bibitem{Trib} H. Triebel, {\it Interpolation Theory. Function Spaces. Differential Operators},
VEB Deutscher Verlag der Wissenschaften, Berlin 1978.

%

\end{thebibliography}
\end{document}